\newcommand{\df}{\ensuremath{\partial}}
\newcommand{\alg}{\ensuremath{\mathcal{A}}}
\newcommand{\ind}{\operatorname{Ind}}
\newcommand{\rr}{\ensuremath{\mathbb{R}}}
\newcommand{\zz}{\ensuremath{\mathbb{Z}}}
\newcommand{\qq}{\ensuremath{\mathbb{Q}}}
\newcommand{\dd}{\ensuremath{\mathbb{D}}}
\theoremstyle{plain}
\newtheorem{thm}{Theorem}[section]
\newtheorem{cor}[thm]{Corollary}
\newtheorem{lem}[thm]{Lemma}
\newtheorem{prop}[thm]{Proposition}
\theoremstyle{definition}
\newtheorem{defn}[thm]{Definition}
\theoremstyle{remark}
\newtheorem{rem}[thm]{Remark}
\numberwithin{equation}{section}
\def\dfn#1{{\textbf {#1}}}
\title{Legendrian Contact Homology in Seifert Fibered Spaces}
\author[J. Licata]{Joan E. Licata} \address{Stanford University,
  Stanford, CA 94305} \email{jelicata@stanford.edu}
\author[J. Sabloff]{Joshua M. Sabloff} \address{Haverford College,
Haverford, PA 19041} \email{jsabloff@haverford.edu} \thanks{JMS is
partially supported by NSF grant DMS-0909273.}
\date{\today}
\begin{document}

\begin{abstract}
We define a differential graded algebra associated to Legendrian knots in Seifert fibered spaces with transverse contact structures.  This construction is distinguished from other combinatorial realizations of contact homology invariants by the existence of orbifold points in the Reeb orbit space of the contact manifold.  These orbifold points are images of the exceptional fibers of the Seifert fibered manifold, and they play a key role in the definitions of the differential and the grading, as well as in the proof of invariance. We apply the invariant to distinguish Legendrian knots whose homology is torsion and whose underlying topological knot types are isotopic;  such examples exist in any sufficiently complicated contact Seifert fibered space.  \end{abstract}

\maketitle

\section{Introduction}\label{sec:intro}

In recent years, the study of Legendrian knots in contact manifolds besides the standard $\rr^3$ has attracted increasing attention.  One thread of this research has investigated the geography of Legendrian knots in rational homology $3$-spheres, while another thread has focused on extending non-classical invariants to an ever-wider range of contact $3$-manifolds. This paper unites these two themes with a study of Legendrian knots in closed orientable Seifert fibered spaces (SFS) over orientable bases equipped with transverse invariant contact structures, which we shall refer to as \dfn{contact Seifert fibered spaces}.  The main result is a combinatorial realization of the Legendrian contact homology differential graded algebra for Legendrian knots in contact SFS's.  

Legendrian knots in rational homology $3$-spheres have been studied from a variety of viewpoints.  Papers such as \cite{bg:leg-lens} and \cite{bgh:lens-HFK} develop combinatorial formulations of Heegaard Floer link homology for Legendrian links in lens spaces.  In work more closely related to this paper, Baker and Etnyre \cite{be:rational-tb} extend classical invariants of null-homologous Legendrian knots to \emph{rationally} null-homologous Legendrian knots and show that rational unknots in tight contact lens spaces are classified up to Legendrian isotopy by their rational classical invariants; see also \cite{bg:leg-lens, cornwell:lens-invts, ozturk}.  We continue Baker and Etnyre's inquiry by developing non-classical invariants for such knots.

Many non-classical invariants of Legendrian knots owe their genesis to geometric ideas of Eliashberg and Hofer \cite{yasha:icm}.  Legendrian contact homology is part of the Symplectic Field Theory framework \cite{egh}, and it was first rendered rigorous and combinatorially computable by Chekanov \cite{chv} for knots in the standard contact $\rr^3$; see also \cite{ens, lenny:computable}. The invariant takes the form of the homology of a non-commutative differential graded algebra $(\alg, \df)$.

Legendrian contact homology has been rigorously defined not only in the standard contact $\rr^3$, but also in the $1$-jet space of the circle \cite{lenny-lisa}; in higher-dimensional contact manifolds such as $\rr^{2n+1}$ \cite{ees:high-d-geometry} or $P \times \rr$, where $P$ is an exact symplectic manifold \cite{ees:pxr}; and in circle bundles over a Riemann surface whose contact structure is defined by a connection of negative curvature \cite{s1bundles}.  More recently, the first author has generalized the circle-bundle invariant to lens spaces with their universally tight contact structures \cite{joan:leg-in-lens}; see also \cite{joan:grid-1-comp}.  

The main goal of this paper is to further generalize the lens space invariant to Legendrian knots in closed oriented Seifert fibered spaces over orientable bases.   In order to accommodate this broader class of manifolds, we adopt the perspective that our Seifert fibered spaces are $S^1$ orbibundles over orientable $2$-orbifolds with cone singularities. Drawing on work of \cite{kt:cr-seifert, lisca-matic:transverse, lutz, massot}, we describe a positive $S^1$-invariant contact form whose Reeb orbits are the fibers of the Seifert fibered structure.  This encodes the Seifert fiber structure as a consequence of the contact geometry and permits us to prove the following:

\begin{thm} \label{thm:dga}
  Let $K$ be a Legendrian knot in a Seifert fibered contact manifold $(M, \alpha)$ as above.  Then there is a combinatorially defined differential graded algebra $(\alg, \df)$, called the \textbf{(low-energy) Legendrian contact homology differential algebra}, whose stable tame isomorphism type is invariant under Legendrian isotopy of $K$.  \end{thm}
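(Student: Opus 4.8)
The plan is to prove Theorem~\ref{thm:dga} by following the general template for establishing invariance of a Legendrian contact homology DGA, adapted to the orbifold setting of a contact SFS. First I would fix a generic diagram for $K$, namely its projection to the base $2$-orbifold $B$, whose double points correspond to the Reeb chords of $K$ running along fibers of the Seifert structure. The algebra $\alg$ is then the free noncommutative unital algebra (over an appropriate group ring recording the relevant homology classes in $M$) generated by these double points, and the grading is an orbifold-corrected Maslov/rotation index in which loops winding around a cone point of order $n$ contribute a fractional rotation number $1/n$. The differential $\df$ is defined by counting rigid (formal-dimension-zero) disks: these are the combinatorial analogues of the holomorphic disks in the symplectization, realized as immersed polygons in $B$ with boundary on the projection of $K$, exactly one positive corner and the remaining corners negative, weighted by their homotopy class and, in the orbifold setting, by how they wind about the cone points. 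The ``low-energy'' qualifier restricts attention to disks whose lifts to $M \times \rr$ have controlled behavior in the fiber direction, which is what makes the count finite and combinatorial.

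Having set up $(\alg, \df)$, I would next verify that it is genuinely a DGA, i.e.\ that $\df$ lowers degree by one and satisfies $\df^2 = 0$. The grading claim reduces to an additivity computation for the index under concatenation of disks, where the orbifold corrections must be checked to sum correctly across cone points. For $\df^2 = 0$ the standard approach is to examine the one-dimensional moduli space of index-one (formally one-parameter) disks and identify its boundary: generically these ends correspond to two-story broken configurations, and the signed (or mod-$2$) count of such broken disks is precisely the coefficient of $\df^2$. The key work is a gluing/degeneration argument showing that the ends of this one-manifold pair off correctly and that the cone points introduce no additional, uncancelled boundary phenomena---one must rule out, for instance, spurious degenerations in which a disk collapses onto or slides across an orbifold point. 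Establishing that the relevant moduli spaces are transversally cut out and compact up to broken buildings, even in the presence of the singular fibers, is where most of the analytic or bookkeeping care is required.

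With $\df^2 = 0$ in hand, invariance follows by decomposing a generic Legendrian isotopy into a finite sequence of elementary diagram moves: the analogues of the Legendrian Reidemeister moves (triple-point passages and birth/death of canceling pairs of double points), together with the genuinely new move in which the projection of $K$ sweeps past a cone point of $B$. For each ordinary move I would exhibit an explicit tame automorphism identifying the before-and-after DGAs, while each birth/death move produces a stabilization, adding generators $a, b$ with $\df a = b$; combining these shows the stable tame isomorphism type is unchanged. The main obstacle, and the feature distinguishing this argument from the circle-bundle and lens-space predecessors, is controlling the DGA under the cone-point move: I expect the hard part to be showing that as the knot passes an orbifold point the set of rigid disks changes only through a controlled stabilization or tame isomorphism, which requires a careful local analysis of how the winding contributions to the grading and to $\df$ are redistributed among the newly created or destroyed disks near the singular fiber. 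Once this local model is understood and shown to yield a stable tame isomorphism, assembling the elementary moves completes the proof.
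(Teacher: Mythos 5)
Your plan follows the same architecture as the paper's actual proof: a combinatorial DGA generated by the short Reeb chords lying over the double points of the Lagrangian projection to the base orbifold, a grading corrected by orbifold data, a differential defined by rigid disk counts, $\df^2=0$ proved by a Chekanov-style compactness/gluing analysis of broken disks (this is precisely Lemmas~\ref{lem:compactness} and~\ref{lem:gluing} in Section~\ref{sec:d2}), and invariance proved by decomposing a generic isotopy into Reidemeister moves plus the genuinely new move in which the knot sweeps past a cone point. The paper calls this the $\alpha$-teardrop move and handles it, just as you predict, by a stabilization argument ($\alpha-1$ stabilizations) combined with an explicit tame isomorphism (Section~\ref{sec:invariance}); the ordinary Reidemeister moves are imported from the circle-bundle case. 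You correctly single out the cone-point move as the new difficulty.

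One element of your setup, however, would fail as written: you propose to count \emph{immersed} polygons, ``weighted by how they wind about the cone points.'' The paper's central new feature is precisely that the disks contributing to $\df$ \emph{cannot} be immersed at the projections of the exceptional fibers: by Definition~\ref{defn:adm}, an admissible map whose domain has no singular points must be modeled on $z \mapsto z^{\alpha_i}$ at every preimage of an exceptional point of order $\alpha_i$, i.e., it is branched there. This is forced by topology, not a convention: a connected disk whose boundary winds $\alpha_i \geq 2$ times around a region containing the cone point cannot map onto that region without branch points (an unbranched degree-$\alpha_i$ cover of a disk is a disjoint union of $\alpha_i$ disks), and such disks genuinely contribute --- for instance, the disk giving $\df a_2 = b_3$ in the paper's first example covers the region containing the order-$3$ exceptional point with multiplicity three. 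So the winding behavior cannot be grafted onto an immersion condition as a weight; the branching must be built into the definition of admissible disks. Two smaller corrections to your setup: each double point of the projection carries \emph{two} generators $a_i, b_i$ (the fiber over a crossing is cut by $K$ into two short chords), not one; and the disks counted by $\df$ are selected not by a homotopy-class weighting but by the vanishing of the defect $n(u;\mathbf{x})$, which by Proposition~\ref{prop:defect-obstruction} is exactly the obstruction to lifting the disk boundary to a closed curve bounding in $M$, with the residual fiber/homology data recorded instead in the exponent of $t$ in the coefficient ring $\zz_2[\qq]$.
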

 
The grading takes values in the rational numbers. The algebra may be defined over the coefficient ring $\zz_2[\qq]$, where the coefficients are represented by polynomials in rational powers of $t$. Setting $t=1$ yields an algebra graded by a cyclic group, and the resulting differential graded algebra can be used to distinguish Legendrian knots.

\begin{thm}\label{thm:ex}
  In any Seifert fibered contact manifold with $b>1$, at least one exceptional fiber, and for which the ratio of the orbifold Euler characteristic of the base to the Euler number of the whole space is non-integral, there exist Legendrian non-isotopic knots whose homology class is torsion and whose underlying topological knots are isotopic.
\end{thm}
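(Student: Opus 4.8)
The plan is to produce the examples by Legendrian stabilization and to separate them using the rational grading of the algebra from Theorem~\ref{thm:dga}, exploiting the hypothesis on $\chi^{\mathrm{orb}}/e$ to guarantee that the collapsed grading group is nontrivial. First I would fix a topological knot type $\kappa$ lying in a torsion class of $H_1(M)$ and admitting a Legendrian representative $K$ whose algebra $(\alg,\df)$ is sufficiently rich to be rigid under destabilization---for instance a Legendrian representative of a fiber, or a small knot supported in a Darboux-type chart near a regular fiber, whose Reeb chords and their gradings can be read off directly. Since Legendrian stabilization changes neither the smooth isotopy type nor the homology class, the family $K,\ S_{+}K,\ S_{+}^{2}K,\dots$ consists of topologically isotopic knots all representing the same torsion class, so it suffices to show that at least two members are not Legendrian isotopic.

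To distinguish them I would extract from the DGA a stable tame isomorphism invariant that is sensitive to the grading. The natural choices are the graded homology $H_{*}(\alg,\df)$ or, when an augmentation exists, the Poincar\'e polynomial of the associated linearized contact homology; both are invariant under stable tame isomorphism and hence, by Theorem~\ref{thm:dga}, are invariants of the Legendrian isotopy class. The core computation is to track how the degrees of the generators change under stabilization: each stabilization introduces new generators whose gradings are determined by the combinatorial grading rule of Theorem~\ref{thm:dga}, including the fractional correction terms contributed by the orbifold points of the Reeb orbit space.

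The numerical hypothesis enters precisely at this point. As recorded after Theorem~\ref{thm:dga}, setting $t=1$ collapses the $\qq$-valued grading to a grading by a cyclic group $\zz/m\zz$, and I expect the non-integrality of $\chi^{\mathrm{orb}}/e$ to be exactly the condition forcing $m>1$; the assumptions $b>1$ and the presence of an exceptional fiber guarantee both that the requisite torsion classes and Legendrian representatives exist and that the orbifold correction is genuinely present rather than vanishing. With $m>1$ there is room in the grading for the generators created by successive stabilizations to occupy distinct grading classes, so that the graded invariant of $S_{+}^{j}K$ varies with $j$ over a suitable range of residues modulo $m$. Comparing, say, $K$ with $S_{+}K$ then exhibits two topologically isotopic, homologically torsion, Legendrian non-isotopic knots, as required.

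The main obstacle I anticipate is robustness under stable tame isomorphism. Because a stabilization can a priori create a cancelling pair of generators, one must verify that the grading discrepancy detected above genuinely survives in homology---or in the linearized theory---rather than being an artifact that an algebraic destabilization could erase. Pinning this down requires careful use of the orbifold grading formula, ensuring the new generators land in grading classes that are not positioned to cancel, and, for the linearized route, checking that the initial knot $K$ was chosen rich enough to admit an augmentation so that the Poincar\'e polynomial is defined and nonzero. Once this grading bookkeeping is settled, the non-integrality hypothesis performs the essential separating work and the theorem follows.
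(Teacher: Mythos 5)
Your plan has a fatal flaw at its core: Legendrian stabilization destroys exactly the invariants you propose to use. A stabilization inserts a small loop whose projection bounds an embedded disc with one positive corner at the new crossing and no negative corners; since that region can be taken arbitrarily small, its defect is an integer arbitrarily close to $0$, hence equal to $0$, so this disc contributes a unit monomial $t^{-|u|}$ to the differential of the new generator. Consequently the DGA of $S_{+}K$ is acyclic (over $\zz_2$, if $\df a = 1$ then any cycle $z$ satisfies $z = \df(az)$), admits no augmentations, and has vanishing homology. Thus neither $H_*(\alg,\df)$ nor any linearized Poincar\'e polynomial can distinguish $S_{+}^{j}K$ from $S_{+}^{k}K$: all of these invariants are identically trivial for every knot in your family. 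The ``grading bookkeeping'' you flag as the main obstacle never gets off the ground, because there is no nonzero graded object left to compare. (The knots $K$ and $S_{+}K$ \emph{are} Legendrian non-isotopic, but only because stabilization changes the rational Thurston--Bennequin invariant; that observation uses none of the hypotheses of the theorem and none of the machinery of the paper, and it is not the argument you gave.)

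The paper's construction is quite different and is designed so that the DGA does the separating work. It builds two knots $K$ and $K'$ that differ by a Chekanov-style tangle replacement inside a Darboux ball on a Legendrian lift $K_0$ of a circle encircling the exceptional point: the hypothesis $b>1$ guarantees a region of defect $-1$ needed to close up $K_0$, and the exceptional fiber puts both knots in the torsion class $[m_i - F]$. Both knots then have a unique augmentation, and their linearized homologies are compared: for $K$ there is a nontrivial class in degree $3$, while $K'$ has no generators in degree $3$ at all. The non-integrality of $\mu = -\chi_{orb}(\Sigma)/e(M)$ enters exactly here: with $t=1$ the grading is only defined modulo $\frac{2}{\alpha} - 2\mu\bigl(1+\frac{\beta}{\alpha}\bigr)$, and $\mu \notin \zz$ ensures that degree $3$ remains separated from the degrees $\pm 1$, $0$, $2\mu$, $2\mu \pm 2$ occupied by the generators of $K'$ after this collapse. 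Your instinct about where the non-integrality hypothesis enters is correct in spirit, but it can only function once one has a pair of knots whose DGAs are nontrivial and augmentable, which stabilized knots never are.
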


The constructions used to define the grading for rationally null-homologous knots have additional applications, and we develop combinatorial algorithms for computing the rational classical invariants in the related paper \cite{ls:tb-paper}. We conjecture that these algorithms strengthen Theorem~\ref{thm:ex} with the additional hypothesis that the knot type is non-simple.

The remainder of the paper is structured as follows: Section~\ref{sec:geometry} provides background on Seifert fibered spaces and transverse invariant contact structures on them.  We also introduce orbifolds and orbibundles as technical tools for later use. In Section~\ref{sec:knots-in-sfs}, we turn our attention to Legendrian knots in these Seifert fibered spaces, developing machinery that will allow us to define the differential graded algebra $(\alg, \partial)$ in Section~\ref{sec:dga}.  We postpone the proofs of $\partial^2=0$ and of invariance to Sections~\ref{sec:d2}  and \ref{sec:invariance}, respectively; these two sections are rather technical, and they focus on the differences between the current invariant and other combinatorial realizations of Legendrian contact homology.  In Section~\ref{sec:ex}, we present families of examples which prove Theorem~\ref{thm:ex}.

\subsection*{Acknowledgements}

We thank Emmanuel Giroux for useful discussions about invariant contact forms on Seifert fibered spaces.  We also thank MSRI for hosting the second author during part of the research process.

\section{Transverse Invariant Contact Forms on Seifert Fibered Spaces}
\label{sec:geometry}

We assume familiarity with standard definitions from contact geometry: positive contact structure, Reeb vector field, Legendrian knot, and Lagrangian projection. An introduction to the relevant background material may be found in \cite{etnyre:intro} and \cite{geiges:intro}.  After introducing Seifert fibered spaces, we will discuss the interactions between their contact geometry and fiber structure. 

\subsection{Invariant Transverse Contact Structures on Seifert fibered
  spaces}
\label{ssec:transverse-structure}

A Seifert fibered space is a closed, connected three-manifold together with a decomposition as a disjoint union of circles, called \dfn{fibers}. Each fiber is required to have a fibered neighborhood of a special type.  Begin with a solid torus $D^2\times S^2$ whose fibers are $\{ pt\} \times S^1$.  Cutting this solid torus along $D^2\times \{pt\}$ and regluing via the map $(r, \theta)\mapsto (r, \theta+\frac{p}{q})$ induces a new fiber structure on the neighborhood of the core fiber $\{0\}\times  S^1$.  When $\frac{p}{q}\in \mathbb{Z}$, the core fiber is called a \dfn{regular} fiber, and  when $\frac{p}{q}\in \mathbb{Q}\setminus \mathbb{Z}$, the core is called an \dfn{exceptional} fiber.  Let $\Sigma$ be the quotient of $M$ by identifying each fiber to a point. Then the quotient map $M\rightarrow \Sigma$ induces an orbifold structure on $\Sigma$, a perspective we will explore more fully in Section~\ref{ssec:orbibundle}.  The image of each exceptional fiber is an \dfn{exceptional point} on the orbifold $\Sigma$.  Note that we may also think of a Seifert fibered space as a $3$-manifold with a semi-free $S^1$-action.

 In order to describe these manifolds, we follow the notational conventions of  \cite{lisca-matic:transverse, massot}: 
let $S$ be a closed, oriented surface of genus $g$.  Consider $r+1$ disjoint discs $D_0, \ldots, D_r$, and let $S' = S \setminus \bigcup \mathring{D}_i$, with each boundary component oriented as the boundary of the removed disc.  Let $M' =S' \times S^1$.  The first homology groups of the boundary tori of $M'$ are generated by classes $\langle m_i, \ell_i \rangle$, with $m_i= [\partial D_i \times \{pt\}]$ and $\ell_i = [\{pt\} \times S^1]$, oriented so that $m_i \cdot \ell_i = 1$. 

Let $b\in \mathbb{Z}$, and glue a solid torus to the boundary component  $\partial D_0 \times S^1$ so that a meridian  is sent to a curve representing the class of  $m_0 + b \ell_0$.  For $1\leq i\leq r$, let  $\alpha_i$ and $\beta_i$ be relatively prime integers such that $0 < \beta_i <\alpha_i$. Glue a solid torus $W_i$ to the $i^{th}$ boundary torus of $M'$ so that a meridian of $W_i$ is sent to a simple closed curve representing the homology class $\alpha_i m_i+ \beta_i \ell_i$. The resulting identification space is the Seifert fibered space with Seifert invariants $(g,b; (\alpha_1, \beta_1), \ldots, (\alpha_r, \beta_r))$; the number $b$ is the \dfn{integral Euler number}.
Note that the first homology of this Seifert fibered space  is generated by the first homology of $S'$, the classes of the curves $m_i$, and the class $F$ of a regular fiber.

Every Seifert fibered space can be realized via this construction, and given two Seifert invariants, it is easy to determine whether they correspond to the same Seifert fibered manifold \cite{orlik}. The \dfn{rational Euler number} of a Seifert fibered space with Seifert invariants $(g,b; (\alpha_1, \beta_1), \ldots, (\alpha_r, \beta_r))$ is the rational number
\[e(M)=-b-\sum_{i=1}^r \frac{\beta_i}{\alpha_i} .\]

Throughout this paper, we will restrict attention to the case where $\Sigma$ and $M$ are both orientable.  We are interested in $S^1$-invariant transverse contact structures on these Seifert fibered spaces, and Kamishima and Tsuboi (and also Lisca and Matic) have determined when such a contact structure exists. With the notation introduced above, their theorem is the following:

\begin{thm}[\cite{kt:cr-seifert, lisca-matic:transverse}]
  On a Seifert fibered space, there exists an $S^1$-invariant transverse contact form if and only if the rational Euler number is negative.
\end{thm}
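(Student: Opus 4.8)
The plan is to realize an $S^1$-invariant contact form transverse to the fibers as a connection $1$-form on the $S^1$-orbibundle $M \to \Sigma$, and then to translate the contact condition into a curvature condition that can be measured by the rational Euler number via an orbifold Chern--Weil (Gauss--Bonnet) argument. The ``if and only if'' will fall out of the fact that the sign of $e(M)$ records exactly the sign of the total curvature, which in turn governs whether the curvature can serve as an area form on the base.

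First I would set up the dictionary between invariant transverse forms and connections. Let $\partial_\theta$ generate the $S^1$-action whose orbits are the fibers. If $\alpha$ is $S^1$-invariant and transverse to the fibers, then $\ker\alpha$ is an invariant plane field complementary to the fiber direction, so $\alpha(\partial_\theta)$ is a nowhere-zero invariant function, which I may normalize to $\alpha(\partial_\theta)\equiv 1$. This exhibits $\alpha$ as a connection $1$-form on the orbibundle, and one checks that its Reeb vector field is then a positive multiple of $\partial_\theta$, so the Reeb orbits are exactly the fibers. Because $\alpha$ is invariant, $d\alpha$ is horizontal and descends to an orbifold $2$-form $\Omega$ on $\Sigma$ with $d\alpha = \pi^*\Omega$. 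Orienting the fibers by $\partial_\theta$ and $\Sigma$ compatibly with the orientation of $M$, the contact condition $\alpha\wedge d\alpha>0$ becomes the statement that $\Omega$ is an area form of a fixed sign on $\Sigma$.

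Next I would compute the total curvature. The orbifold analogue of the Chern--Weil formula gives, with appropriate orientation conventions,
\[ \frac{1}{2\pi}\int_\Sigma \Omega = -e(M), \]
where the fractional contributions $\beta_i/\alpha_i$ arise precisely from integrating the curvature in orbifold charts around the exceptional points, and $b$ from the gluing along $\partial D_0 \times S^1$. For the forward direction, if such an $\alpha$ exists then $\Omega$ has a definite sign, so $\int_\Sigma\Omega\neq 0$ and, with the conventions above, $e(M)<0$. Conversely, if $e(M)<0$, I would choose any orbifold area form $\Omega$ on $\Sigma$ of the correct sign with $\frac{1}{2\pi}\int_\Sigma\Omega = -e(M)>0$ and realize it as the curvature of a connection $\alpha$ on the orbibundle; the resulting $S^1$-invariant form is transverse and satisfies $\alpha\wedge d\alpha = \alpha\wedge\pi^*\Omega>0$, giving the desired contact form.

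The main obstacle is making the Chern--Weil computation rigorous in the orbifold category and, in the converse direction, ensuring that the prescribed curvature is actually realized by a connection on this particular Seifert structure. Near each exceptional fiber the orbibundle is modeled on a quotient by $\zz/\alpha_i$, so both the connection form and its curvature must be handled in orbifold charts, and one must verify that the local contributions assemble to the \emph{rational} rather than the integral Euler number. Equivalently, when building $\alpha$ one must check that a connection with the prescribed total curvature extends smoothly across each solid torus $W_i$ glued along the class $\alpha_i m_i + \beta_i\ell_i$; this is where the Seifert data $(\alpha_i,\beta_i)$ and $b$ genuinely enter, and where care is needed to match the local models to the global area form.
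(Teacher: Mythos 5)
You should know at the outset that the paper does not prove this theorem: it is imported by citation from Kamishima--Tsuboi and Lisca--Mati\'c, so there is no internal proof to compare against line by line. That said, your outline is the standard argument for the invariant case and is correct in its essentials; indeed, it is assembled from exactly the ingredients the paper develops for its own later use, namely the normalization $\overline{\alpha} = \frac{1}{\alpha(X)}\alpha$, which makes the generator of the circle action the Reeb field and exhibits $\overline{\alpha}$ as a connection form (Lemma~\ref{lem:good-reeb}), and the curvature form $\kappa = -\frac{1}{2\pi}\Omega$ with $\pi^*\Omega = d\overline{\alpha}$ and $e(M) = \int_\Sigma \kappa$ (Equation~(\ref{eqn:total-euler})), which is precisely your orbifold Chern--Weil identity.

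Two refinements would tighten the sketch. In the forward direction, nonvanishing of $\Omega$ alone gives only $e(M) \neq 0$; the strict inequality $e(M)<0$ uses that the theorem concerns \emph{positive} contact forms together with the compatible orientation conventions you set up (otherwise reversing the orientation of $M$, which negates $e(M)$, would give a counterexample), so that convention is doing real mathematical work and belongs in the hypotheses rather than in the bookkeeping. In the converse direction, the ``main obstacle'' you identify --- checking that a connection with prescribed curvature extends across the solid tori $W_i$ --- dissolves if you argue cohomologically rather than chart by chart: invariant connection forms on the \emph{fixed} orbibundle $\pi: M \to \Sigma$ form an affine space modeled on pullbacks $\pi^*\beta$ with $\beta \in \Omega^1(\Sigma)$, so the achievable curvatures form an affine space over exact $2$-forms; since two $2$-forms on a closed oriented $2$-orbifold with the same total orbifold integral differ by an exact form, any area form of total integral $-2\pi e(M)$ is the curvature of some connection $\alpha_0 + \pi^*\beta$, which is then the desired contact form. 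With this phrasing, the Seifert data $(\alpha_i,\beta_i)$ and $b$ enter exactly once --- in the orbifold Chern--Weil computation of the total curvature of a single reference connection --- and nothing further needs to be matched across the gluings.
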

    
In fact, we can adjust the contact form promised by the theorem so that its Reeb vector field is particularly nice:

\begin{lem} \label{lem:good-reeb}
  Any $S^1$-invariant transverse contact form on a Seifert fibered space is contactomorphic to a contact form whose Reeb vector field points along the fibers. \end{lem}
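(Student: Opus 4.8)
The plan is to avoid producing any nontrivial diffeomorphism and instead to rescale the given form, exhibiting a defining form for $\ker\alpha$ whose Reeb field is the infinitesimal generator of the $S^1$-action. Let $V$ denote that generator, i.e.\ the vector field tangent to the fibers that integrates to the circle action. Since every orbit of the action is a genuine circle (a Seifert fibration has no fixed points, even along the exceptional fibers, where the stabilizer is only a finite cyclic group), $V$ is nowhere vanishing and is everywhere tangent to the fibers. The two hypotheses translate into infinitesimal statements: $S^1$-invariance gives $\mathcal{L}_V\alpha = 0$, and transversality with the positive co-orientation gives $f := \alpha(V) > 0$ everywhere.

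First I would record the consequence of invariance via Cartan's formula: since $\mathcal{L}_V\alpha = \iota_V d\alpha + d\,\iota_V\alpha = 0$ and $\iota_V\alpha = f$, we obtain $\iota_V d\alpha = -df$. I would also note that $f$ is itself $S^1$-invariant, because $\mathcal{L}_V f = (\mathcal{L}_V\alpha)(V) + \alpha(\mathcal{L}_V V) = 0$ using $[V,V]=0$; equivalently $Vf = 0$.

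Next I would define $\alpha' := f^{-1}\alpha$. As $f$ is smooth and nowhere zero, $\alpha'$ is a smooth $1$-form with $\ker\alpha' = \ker\alpha$, hence again a positive contact form defining the same co-oriented contact structure. I claim $V$ is its Reeb field. Indeed $\alpha'(V) = f^{-1}\alpha(V) = 1$, and a direct computation using $d\alpha' = f^{-1}d\alpha - f^{-2}\,df\wedge\alpha$ together with $\iota_V d\alpha = -df$ and $Vf=0$ gives $\iota_V d\alpha' = f^{-1}(-df) - f^{-2}\bigl((Vf)\alpha - f\,df\bigr) = f^{-1}(-df) - f^{-2}(-f\,df) = 0$. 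Thus $\alpha'(V)=1$ and $\iota_V d\alpha' = 0$, which is exactly the characterization of the Reeb field, so $R_{\alpha'} = V$ points along the fibers. Finally, since $\alpha$ and $\alpha'$ cut out the same contact structure, the identity map is a contactomorphism carrying $\alpha$ to $\alpha'$, completing the argument.

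The computation is routine; the only point requiring care — and the one I would emphasize as the main obstacle — is the global smoothness and nonvanishing of $V$ near the exceptional fibers, where the $S^1$-action fails to be free. This is where the Seifert (orbibundle) structure enters: one must verify that the generating vector field of the action remains smooth and nonzero across the exceptional fibers, so that division by $f=\alpha(V)$ is legitimate on all of $M$. Once that is established, transversality guarantees $f\neq 0$ and invariance guarantees $Vf=0$, and the rescaling argument goes through without further obstruction.
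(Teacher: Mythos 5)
Your proposal is correct and is essentially the paper's own argument: both rescale $\alpha$ by $1/\alpha(V)$ (the paper calls the generator $X$) and verify that the rescaled form has $V$ as its Reeb field, the identity then serving as the contactomorphism. The paper's only cosmetic difference is that it runs Cartan's formula directly on the rescaled form $\overline{\alpha}$ (using that $\overline{\alpha}$ is invariant and $\iota_X\overline{\alpha}\equiv 1$), whereas you expand $d(f^{-1}\alpha)$ explicitly; the content is the same.
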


\begin{proof}
  Let $\alpha$ be the invariant transverse contact form, and let $X$
  be the vector field that generates the circle action on $M$. Since
  $\alpha$ is positive and $\ker \alpha$ is transverse to the fibers,
  the function $\alpha(X)$ is invariant and always positive. Thus, we
  may form $\overline{\alpha} = \frac{1}{\alpha(X)} \alpha$.  Notice that
 \begin{equation*}
    \iota_X d \overline{\alpha} = \mathcal{L}_X \overline{\alpha} - d \iota_X
    \overline{\alpha} = 0;
  \end{equation*}
  the Lie derivative term vanishes because $\overline{\alpha}$ is invariant and $\iota_X\overline{\alpha}$ is constant.  Thus, we have $X \in \ker d\overline{\alpha}$, and hence that the Reeb field points along the fibers.
\end{proof}

As a result of this lemma, we will assume that the phrase ``contact Seifert fibered space'' refers to an orientable Seifert fibered space over an orientable base, together with a contact form whose Reeb trajectories realize the Seifert fibers. 

Finally, in the case that $\Sigma$ is not simply-connected, we equip $\Sigma$ with a collection of oriented simple closed curves $\{X_i\}$ that form a basis for $H_1(\Sigma)$.  We denote the union of these curves by $\mathbf{X}$.

\begin{lem} After possibly adjusting the contact structure locally, each $X_i$ may be chosen to be the Lagrangian projection of a Legendrian loop in $M$. \end{lem}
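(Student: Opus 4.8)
The plan is to realize each $X_i$ as the image under the projection $\pi\colon M\to\Sigma$ of a horizontal lift determined by the contact form, and to remove the holonomy obstruction to closing up that lift by a local modification of $\alpha$. By Lemma~\ref{lem:good-reeb} we may regard $\alpha$ as a connection on the $S^1$-orbibundle $\pi$, with $\ker\alpha$ the horizontal distribution and the fibers as Reeb orbits, so that the Lagrangian projection is exactly $\pi$. Since $X\in\ker d\alpha$ and $d\alpha$ is invariant, $d\alpha$ descends to an area form $\omega$ on $\Sigma$ with $d\alpha=\pi^*\omega$. A horizontal lift of a curve in $\Sigma$ is tangent to $\ker\alpha$, hence Legendrian, and the lift of a loop $X_i$ closes up to a (necessarily embedded) Legendrian loop precisely when its holonomy $h_i\in\rr/L\zz$ vanishes, where $L$ is the fiber period. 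First I would choose the $X_i$ to be embedded and to avoid the exceptional points of $\Sigma$, so that everything below takes place where $\pi$ is an honest principal $S^1$-bundle.

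Next I would kill the holonomies by passing to $\alpha'=\alpha+\pi^*\xi$ for a suitable closed $1$-form $\xi$ on $\Sigma$. Because $\xi$ is closed, $d\alpha'=d\alpha$, and because $\xi\wedge d\alpha=\pi^*(\xi\wedge\omega)$ is the pullback of a $3$-form on the surface $\Sigma$ it vanishes; thus
\[
\alpha'\wedge d\alpha' = \alpha\wedge d\alpha + \pi^*(\xi\wedge\omega)=\alpha\wedge d\alpha,
\]
so $\alpha'$ is again a positive contact form. Moreover $\ker d\alpha'=\ker d\alpha$ and $\iota_X\pi^*\xi=\xi(\pi_*X)=0$, so the Reeb field, the fibration, and the Lagrangian projection $\pi$ are all unchanged; only the contact planes tilt, and only over the support of $\xi$. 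Under this change the holonomy of $X_i$ shifts by $\oint_{X_i}\xi$, so it suffices to produce $\xi$ with prescribed periods $\oint_{X_i}\xi=\tilde h_i$, where $\tilde h_i\in\rr$ is a lift of $h_i$.

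The crux is that $\xi$ must simultaneously realize these periods and be supported in a small region, so that the adjustment is genuinely local. Since the classes $[X_i]$ form a basis of $H_1(\Sigma)$, I would take a dual system of curves $D_j$ with algebraic intersection numbers $X_i\cdot D_j=\delta_{ij}$, each disjoint from the exceptional points, and set $\xi=\sum_j\tilde h_j\,\delta_{D_j}$, where $\delta_{D_j}$ is the standard closed Poincar\'e-dual $1$-form supported in a thin annular neighborhood of $D_j$ (a bump in the transverse coordinate, integrating to one across the annulus). Then $\oint_{X_i}\xi=\sum_j\tilde h_j\,(X_i\cdot D_j)=\tilde h_i$ as required, while $\xi$ is supported in a neighborhood of $\bigcup_j D_j$. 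Consequently $\alpha'$ has the claimed form, every $X_i$ now has trivial holonomy, and its horizontal lift is an embedded Legendrian loop projecting to $X_i$. The main obstacle I anticipate is purely one of bookkeeping in general position: arranging the dual curves and their annular neighborhoods to miss the orbifold locus, so that the connection and the bump forms are defined as on an honest circle bundle, and confirming that the nondegenerate intersection pairing on $\Sigma$ does let a compactly supported closed $\xi$ hit arbitrary periods on the chosen basis. Neither difficulty is serious once the curves are transverse and avoid the cone points.
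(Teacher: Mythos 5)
Your proposal is correct, and its overall strategy is the same as the paper's: lift $X_i$ horizontally, identify the failure to close up as a holonomy in the fiber circle, and kill that holonomy by replacing $\alpha$ with $\alpha+\pi^*\xi$ for a closed $1$-form $\xi$ on $\Sigma$ --- a change which preserves $d\alpha$, the Reeb field, the fibration, and the contact condition (since $\pi^*(\xi\wedge\omega)=0$), exactly as you verify. Where you genuinely differ is in the choice of $\xi$, and the difference is substantive. The paper takes $\beta$ to be a form ``representing the Poincar\'e dual of $X_i$,'' supported near $X_i$, and scales it until the lift closes; read literally this cannot work, because the period of any closed form representing $\mathrm{PD}[X_i]$ around $X_i$ itself is the self-intersection number $[X_i]\cdot[X_i]=0$, so no scaling of such a $\beta$ changes the holonomy of the lift of $X_i$. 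The paper's argument goes through only if one reads ``Poincar\'e dual'' loosely as ``closed form with unit period on $X_i$,'' which is precisely what you construct: bump forms $\delta_{D_j}$ dual to a transverse system of curves $D_j$ with $X_i\cdot D_j=\delta_{ij}$. Your version also buys something the paper's curve-by-curve adjustment (with its partition of unity) leaves implicit: since $\oint_{X_i}\delta_{D_j}=\delta_{ij}$, the corrections for different indices do not interfere, so all holonomies are killed simultaneously by a single modification. The two loose ends you flag are indeed harmless: the dual classes are automatically primitive (if $d_j=m\,e$ then $1=X_j\cdot d_j=m\,(X_j\cdot e)$ forces $m=\pm 1$), hence representable by embedded curves avoiding the cone points; and you should record explicitly that $\alpha+\pi^*\xi$ is still $S^1$-invariant and transverse ($\iota_X\pi^*\xi=0$ and $\mathcal{L}_X\pi^*\xi=0$), so the adjusted form again presents $M$ as a contact Seifert fibered space in the paper's sense.
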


\begin{proof} Starting at an arbitrary point on $X_i$, lift the entire curve to a Legendrian curve in $M$.  In general, this curve will not be closed, and we adjust the contact form on the bundle over a neighborhood of $X_i$ so that the endpoints agree.  (A similar argument is used in \cite{s1bundles}.) Let $\beta\in \Omega^1(\Sigma)$ be an $S^1$-invariant form representing the Poincar\'{e} dual of $X_i$.  Scale $\beta$ so that with respect to the contact form $\alpha+\pi^*k\beta$, the curve $X_i$ lifts to a closed Legendrian curve in $M$. Use a partition of unity to ensure a smooth transition from $\alpha$ to the adjusted local form.
\end{proof}

\subsection{Seifert Fibered Spaces as Orbibundles}
\label{ssec:orbibundle}

As mentioned in the introduction, we will make use of the language of orbifolds and orbibundles in our analysis. We use the terminology of \cite{cr:orbi-gw} to discuss orbifolds and orbibundles. The key facts, all easily verified, are presented here.

The surface $\Sigma$ has a natural orbifold structure induced by the Lagrangian projection of $M$.  In particular, the image of the $j^{th}$ exceptional fiber has a local $\zz/\alpha_j$ action.  Its orbifold Euler characteristic is given by
\begin{equation} \label{eq:orb-euler-char}
  \chi_{orb}(\Sigma) = \chi(\Sigma) + \sum_{j=1}^r \left( \frac{1}{\alpha_j} - 1 \right).
\end{equation}

The manifold $M$ has the structure of an $S^1$-orbibundle over $\Sigma$.  A fiber neighborhood of the $j^{th}$ exceptional fiber has a diagonal action of $\zz/\alpha_j$ given by
\[(z,t) \mapsto \bigl( \exp (\frac{2\pi
  i}{\alpha_j})z, \exp (\frac{2\pi
  i \beta_j}{\alpha_j}) t \bigr).\]
The manifold $M$ may be thought of as the unit circle orbibundle of a complex line orbibundle.  Its orbifold Euler number (or first Chern number) is the rational Euler number defined above.

A smooth map of orbifolds $u: X\to X'$ is called \dfn{regular} if the
preimage of the regular points of $X'$ is an open, dense, and
connected subset of $X$. It follows from Lemmas 4.4.3 and 4.4.11 of \cite{cr:orbi-gw} that if $E$ is an orbibundle over $X'$ and $u:X\to X'$ is a regular map, then there is a canonical pull-back orbibundle $u^*\pi: u^*E\to
X$ and a smooth map $\tilde{u}:u^*E\to E$ which covers $u$.

In the specific case of the orbibundle $\pi: M\to \Sigma$, the contact form $\alpha$ induces a form $\Omega$ on $\Sigma$ which satisfies $\pi^*\Omega=d\alpha$. We define the Euler curvature form on $\Sigma$ to be $\kappa = -\frac{1}{2\pi}\Omega$. The usual Chern-Weil theory, adapted to orbibundles, then allows us to compute:
\begin{equation} \label{eqn:total-euler}
  e(M) = \int_\Sigma \kappa, 
\end{equation}
where integration is performed in the orbifold sense.  As in \cite{lutz}, we combine this calculation  with the requirement that $e(M)<0$ in order to view $\kappa$ as a negative multiple of the volume form on $\Sigma$.

\section{Legendrian Knots in Seifert Fibered Spaces}
\label{sec:knots-in-sfs}

In this section, we turn our attention to Legendrian knots in the contact manifolds described above.  This section provides the key link between the geometric behavior of these knots and the combinatorics of the algebra defined in Section~\ref{sec:dga}, and more generally, it establishes a scheme for for representing Legendrian knots in contact Seifert fibered spaces diagrammatically.

\subsection{Lagrangian Diagrams for Legendrian Knots}
\label{ssec:diagrams}

Let $K$ be a generic Legendrian knot in a transverse contact Seifert fibered space $M$; assume that $K$ lies in the complement of the exceptional fibers.  Let $\pi:M\rightarrow \Sigma$ denote Lagrangian projection, and let $\Gamma=\pi(K)$.  The preimage of each double point of $\Gamma$ consists of a primitive Reeb orbit which intersects $K$ twice, partitioning the orbit into two Reeb chords that each begin and end on $K$.

Let $\ell(x)$ denote the length of the Reeb chord $x$:
\[ \ell(x)=\int_x\alpha.\]
A regular fiber has length $2\pi$, and we will sometimes refer to chords with length less than the orbital period as being ``short''.  

Label the double points of $\Gamma$ with $\{1, \ldots, n\}$.  Each crossing locally divides $\Sigma$ into four quadrants, and orienting $K$ distinguishes a pair of quadrants whose boundaries are oriented coherently.  At the $i^{th}$ crossing, label this pair of quadrants by $a_i^+$ and $b_i^-$ as in Figure~\ref{fig:local-labels}.  Label the other pair of quadrants by $a_i^-$ and $b_i^+$.  

\begin{figure}[ht]
  \begin{center}
    \scalebox{.5}{\includegraphics{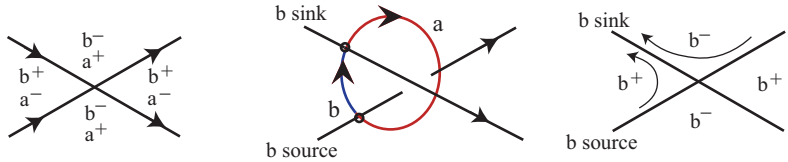}}
 \end{center}
 \caption{Quadrant labels near a double point of $\Gamma$.}
 \label{fig:local-labels}
\end{figure}

These labels correspond to the short Reeb chords projecting to the crossing in the following manner:  each oriented chord identifies the strands of $K$ locally as ``source'' and ``sink''; traveling from the $b$ source strand to the $b$ sink strand in $\Gamma$ orients the quadrant labeled $b^+$ positively.  Similarly, traveling from the $a$ source strand to the $a$ sink strand in $\Gamma$ orients the quadrant labeled $a^+$ positively. We will use the terms \dfn{$a$-type} and \dfn{$b$-type} to designate chords labeled with $a_i$ and $b_j$, respectively, and we say that the $a$-type chords are \dfn{preferred}.

A \dfn{Lagrangian diagram} for the pair $(M, K)$ consists of a pair $(\Sigma, \Gamma)$ which is diffeomorphic to the Lagrangian projection of $(M,K)$, together with  markings indicating the exceptional points and their Seifert invariants. When $\pi_1(\Sigma)\neq 1$, we also assume that $\Gamma$ and the $X_i$ intersect transversely.

A Lagrangian diagram is \dfn{labeled} if it is decorated with the following additional data:
\begin{enumerate}
\item an orientation of $\Gamma$;
\item chord labels $a_i$ and $b_i$ at each crossing of $\Gamma$; 
\item for each component $R \in \Sigma \setminus  \big(\Gamma \cup \mathbf{X})$,  a rational number $n(R)$, called the \dfn{defect}, which will be defined in the next section.
\end{enumerate}

\subsection{Admissible Disks}
\label{ssec:adm}

By itself, the isotopy class of $\Gamma$  is insufficient to recover the Legendrian type of $K$.  This is remedied by labeling each region in the Lagrangian diagram with its \dfn{defect}, a notion which extends the one introduced in \cite{s1bundles} to the case of $S^1$-orbibundles.  Instead of working directly with regions, however, we use the somewhat more general concept of an admissible disc.

\begin{defn} 
  A \dfn{marked surface} $\dd$ is a complex orbifold of real dimension $2$ with $m+1$ distinguished points $z_0, \ldots, z_m$ on $\partial \dd$ and $l$ distinguished points $w_1, \ldots, w_l$ in the interior of $\dd$ ($m,l\geq 0$). The points $z_0, \ldots, z_m$ are called \dfn{marked boundary points}, and the points $w_1, \ldots, w_l$ are called \dfn{marked interior points}. The singular points of $\dd$ are contained in the set of marked interior points and all have cyclic local groups.

  When $\dd$ is topologically the unit disc in the complex plane, we refer to the \dfn{marked disc}.  
\end{defn}

\begin{defn}\label{defn:adm} A map $u: (\dd, \partial \dd) \to (\Sigma, \Gamma \cup \mathbf{X} )$ of a marked surface into a Lagrangian diagram is \dfn{admissible} if the following conditions are satisfied:
  \begin{enumerate}
  \item away from the marked points, $u$ is a smooth, orientation-preserving immersion, as is the restriction  of $u$ to $\partial \dd \setminus \{z_0, \ldots z_m \} $;
  \item The preimage of each exceptional point in $\Sigma$ is a marked interior point of $\dd$.
  \item If an exceptional point of $\Sigma$ has invariants $(\alpha_i, \beta_i)$, then  the associated marked interior point is an orbifold point with group $\zz_{k_i}$ for some $k_i$ dividing $\alpha_i$. In a neighborhood of the associated marked interior point, $u$ is modeled on $z \mapsto z^{\frac{\alpha_i}{k_i}}$.
   \item Each marked boundary point $z_i$ maps to a double point of $\Gamma \cup \mathbf{X}$, and the $u$-image of a neighborhood of $z_i$ covers an odd number of quadrants of $\Sigma$ near the double point.
   \end{enumerate}
\end{defn}

Two admissible maps $u$ and $v$ are deemed to be \dfn{equivalent} if
there is an automorphism $\phi: \dd \to \dd$ of marked surfaces such
that $u = v \circ \phi$.

An admissible map $u$ is a regular map in the orbifold sense, so we may pull back the orbibundle $M \to \Sigma$ to an orbibundle over $\dd$.  Note that if $k_i=1$ at each exceptional point, then this pullback will be an honest circle bundle over a Riemann surface.

Recall that the Lagrangian projection from $M$ to $\Sigma$ is a bundle map, and let $\tilde{u}: u^*M \to M$ be the map covering $u$.  Let $\mathbf{x} = \{x_0, \ldots, x_m\}$ be a choice of short Reeb chords lying above the images under $u$ of the marked boundary points.  The image of $u$ covers a quadrant labeled with $x_i^+$ or $x_i^-$ near the image of the marked boundary point, as indicated in  Figure~\ref{fig:local-labels}; let $\epsilon_i$ be $+1$ (resp.\ $-1$) if $x_i$ is traversed in the positive (resp.\ negative) direction.

\begin{defn} \label{defn:defect} With $u$ as above, the \dfn{defect} of $u$ with respect  to the chords $\mathbf{x}$ is defined by:
  \begin{equation} \label{eq:defect}
    n(u;\mathbf{x}) = \int_{\dd} u^*\kappa + \frac{1}{2\pi} \sum_{i=0}^m \epsilon_i \ell(x_i).
  \end{equation}
\end{defn}

The defect allows us to label the Lagrangian diagram for $(M,K)$.
For any region $R\subset \Sigma\setminus \big( \Gamma \cup \mathbf{X} \big)$, choose an admissible map
$u:\dd\rightarrow \Sigma$ with image $\overline{R}$ and define the
\dfn{defect of $R$} as the defect of $u$ with the choice of the preferred $a$-type chords at the corners. The definition of the defect of a region, together with Equation~(\ref{eqn:total-euler}), implies:

\begin{prop}\label{prop:defe} Let $\{R_j\}_{j=1}^r$ be the regions of $\Sigma\setminus \big( \Gamma \cup \mathbf{X}\big)$.  Then 
 \[ \sum_{j=0}^r n(R_j)=e(M).\]
\end{prop}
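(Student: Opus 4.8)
The plan is to split each defect into the two pieces that appear in Equation~(\ref{eq:defect}) — the curvature integral $\int_{\dd} u^*\kappa$ and the boundary chord sum $\frac{1}{2\pi}\sum_i \epsilon_i \ell(x_i)$ — sum each piece over all regions, and show that the curvature pieces assemble to $e(M)$ while the chord pieces cancel. Thus I would write $n(R_j) = \int_{\dd} u_j^*\kappa + \frac{1}{2\pi}\sum_i \epsilon_i \ell(a_i)$, where $u_j$ is the admissible map defining the defect of $R_j$ and the corner chords are the preferred $a$-type chords, and then treat the two resulting sums separately.

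For the curvature terms, each $u_j:\dd \to \Sigma$ is an orientation-preserving immersion onto $\overline{R_j}$, so $\int_{\dd} u_j^*\kappa = \int_{R_j}\kappa$ in the orbifold sense; the local model $z \mapsto z^{\alpha_i/k_i}$ at the marked interior points is exactly what makes this orbifold integral behave correctly over any exceptional points contained in $R_j$. Since the regions tile $\Sigma$ away from the measure-zero set $\Gamma \cup \mathbf{X}$, summing yields $\sum_j \int_{\dd} u_j^*\kappa = \int_\Sigma \kappa$, which is $e(M)$ by Equation~(\ref{eqn:total-euler}). It then remains to show that the total chord contribution vanishes.

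The crux is to reorganize the chord contribution by crossings rather than by regions. Summing over all regions and all their corners is the same as summing over all (double point, local quadrant) pairs of the graph $\Gamma \cup \mathbf{X}$, since each of the four quadrants at a $4$-valent vertex belongs to exactly one region and is counted once. It therefore suffices to check cancellation at a single double point. Using the length-$2\pi$ Reeb orbit above the crossing, write $\ell(b_i) = 2\pi - \ell(a_i)$ for the complementary short chord. Expressing every corner through the preferred $a$-type chord, the conventions of Figure~\ref{fig:local-labels} give $+\frac{1}{2\pi}\ell(a_i)$ and $-\frac{1}{2\pi}\ell(a_i)$ from the two $a$-labeled quadrants, and $+1 - \frac{1}{2\pi}\ell(a_i)$ and $-1 + \frac{1}{2\pi}\ell(a_i)$ from the two $b$-labeled quadrants, where the $\pm 1$ records the full orbit absorbed in re-expressing $b_i$ in terms of $a_i$. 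These four terms sum to zero, and the argument is insensitive to whether the vertex is a $\Gamma$-$\Gamma$, $\Gamma$-$\mathbf{X}$, or $\mathbf{X}$-$\mathbf{X}$ crossing. Combining this with the curvature computation gives $\sum_j n(R_j) = \int_\Sigma \kappa = e(M)$.

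The main obstacle is the sign bookkeeping in the last step: one must pin down the orientation conventions of Figure~\ref{fig:local-labels} precisely enough to be sure the two $b$-quadrant contributions come out as $+1 - \frac{1}{2\pi}\ell(a_i)$ and $-1 + \frac{1}{2\pi}\ell(a_i)$ (equivalently, that the fractional parts carry signs $-1$ and $+1$), and one must confirm that regions which are not embedded discs, and corners covering an odd number of quadrants as allowed by admissibility, do not disrupt the clean ``each quadrant counted exactly once'' reorganization. Once the local four-term cancellation is verified against the figure, the global identity follows immediately.
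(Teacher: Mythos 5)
Your proposal is correct and matches the paper's (largely implicit) proof: Proposition~\ref{prop:defe} is asserted to follow from the definition of the defect of a region together with Equation~(\ref{eqn:total-euler}), which is exactly your decomposition into curvature terms assembling to $\int_\Sigma \kappa = e(M)$ and chord terms canceling crossing by crossing. The only caveat is in the bookkeeping at the $b$-labeled quadrants: under the paper's convention the preferred chord $a_i$ is itself traversed at those corners (negatively at $b_i^+$, positively at $b_i^-$), giving contributions $-\frac{1}{2\pi}\ell(a_i)$ and $+\frac{1}{2\pi}\ell(a_i)$ with no integer offsets, rather than your $+1-\frac{1}{2\pi}\ell(a_i)$ and $-1+\frac{1}{2\pi}\ell(a_i)$; since both sets of four terms sum to zero at every crossing, this discrepancy does not affect the conclusion.
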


\subsection{The Meaning of the Defect}
\label{ssec:defectprop}

Admissible maps without singular points in their domains  will play a key role in defining the differential for the Legendrian contact homology algebra. 
In order to better understand the defect of an admissible map with a given choice of Reeb chords $\mathbf{x}$, we construct a curve $C_{\mathbf{x}}$ in $u^*M$ as follows: lift $\partial \dd$ to a curve $C_\mathbf{x}: S^1 \to u^*M$ which consists of segments of the Legendrian lifts of $\partial \dd$, alternating with the preferred chords $x_i$ traversed in the direction determined by $u$.  If $\tilde{u}: u^*M \to M$ is the bundle map covering $u$, then it is easy to see that:
\begin{equation} \label{eq:cx}
  \int_{C_{\mathbf{x}}} \tilde{u}^*\alpha = \sum_i \epsilon_i \ell(x_i).
\end{equation}

The defect of an admissible map $u$ with no singular points in its domain is the obstruction to extending the lifted boundary curve $C_{\mathbf{x}}$ to a map from a disc to $M$.

\begin{prop}\label{prop:defect-obstruction}
  Suppose that $u$ is an admissible map with no singular points in its domain and that $\mathbf{x}$ is a choice of Reeb chords over the corners of $u$.  Then $\tilde{u}|_{C_{\mathbf{x}}}$ extends to a map from a disc to $M$ if and only if $n(u;\mathbf{x}) = 0$.
\end{prop}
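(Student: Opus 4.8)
The plan is to exploit that, because $u$ has no singular points in its domain, the pullback $u^*M\to\dd$ is an honest circle bundle over a disc and hence trivial, say $u^*M\cong\dd\times S^1$ with fiber coordinate $\phi$. On this trivial bundle the pulled-back form $A:=\tilde{u}^*\alpha$ is a connection $1$-form: its kernel is the horizontal (Legendrian) distribution, its integral over a fiber is $2\pi$, and its curvature $dA=\tilde{u}^*d\alpha$ is pulled back from the $2$-form $u^*\Omega=-2\pi\,u^*\kappa$ on $\dd$. The whole statement reduces to computing a single integer, the winding number $w$ of the loop $C_{\mathbf{x}}$ around the $S^1$-fiber, and then identifying $w$ with $n(u;\mathbf{x})$.

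For that identification I would write $A=d\phi+\beta$ in the trivialization, where $\beta$ is pulled back from $\dd$. Splitting $C_{\mathbf{x}}$ into its Legendrian (horizontal) arcs and its vertical Reeb-chord arcs, the $d\phi$-term contributes exactly $2\pi w$, while the $\beta$-term sees only the base projection of $C_{\mathbf{x}}$, which traverses $\partial\dd$ once (the chords project to points); Stokes' theorem then gives $\int_{C_{\mathbf{x}}}\beta=\int_\dd u^*\Omega=-2\pi\int_\dd u^*\kappa$. Combining these with Equation~(\ref{eq:cx}), namely $\int_{C_{\mathbf{x}}}A=\sum_i\epsilon_i\ell(x_i)$, yields
\begin{equation*}
\sum_i\epsilon_i\ell(x_i)=2\pi w-2\pi\int_\dd u^*\kappa,
\end{equation*}
which rearranges precisely to $n(u;\mathbf{x})=w$. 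In particular this shows that in the singularity-free case the defect is forced to be an integer, namely the fiber-winding number of $C_{\mathbf{x}}$.

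With this in hand the forward implication is immediate: if $n(u;\mathbf{x})=0$ then $w=0$, so $C_{\mathbf{x}}$ is null-homotopic in the solid torus $u^*M$ and bounds a disc $D\subset u^*M$; pushing $D$ forward by $\tilde{u}$ produces a disc in $M$ extending $\tilde{u}|_{C_{\mathbf{x}}}$. For the converse I would argue directly from areas: given a filling $f\colon D^2\to M$ of $\tilde{u}|_{C_{\mathbf{x}}}$, use $d\alpha=\pi^*\Omega$ to compute $\int_{C_{\mathbf{x}}}A=\int_{\partial f}\alpha=\int(\pi\circ f)^*\Omega$. Since $\pi\circ f$ and $u$ share the boundary $u|_{\partial\dd}$, they glue to a $2$-sphere $S\to\Sigma$ with $\int_S\Omega=2\pi\,n(u;\mathbf{x})$. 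As $\Omega=-2\pi\kappa$ is a fixed signed multiple of the area form on $\Sigma$, this integral is governed by the class of $S$, and I would conclude $n(u;\mathbf{x})=0$ from the fact that the base carries no essential spherical class on which $\kappa$ integrates nontrivially.

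I expect the converse to be the main obstacle: the forward direction and the winding-number identity are formal, but ruling out a filling disc in $M$ that ``unwinds'' the fiber requires controlling the topology of the base $\Sigma$ (equivalently, understanding when the regular fiber is essential in $M$). The cleanest route is the $2$-sphere/degree argument above combined with the negativity of $e(M)$, and the delicate point is that the projected filling $\pi\circ f$ may pass near the orbifold points of $\Sigma$, so that $\int_S\Omega$ must be interpreted in the orbifold sense.
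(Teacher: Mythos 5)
Your computation of the fiber-winding identity is correct and is essentially the paper's own argument: the paper also passes to the honest circle bundle $u^*M \to \dd$, trivializes it, and applies Stokes' theorem to $\tilde{u}^*\alpha$ (there phrased by concatenating $C_{\mathbf{x}}$ with $-\deg C_{\mathbf{x}}$ fibers and perturbing to a section rather than by splitting a connection form), arriving at exactly your conclusion $n(u;\mathbf{x}) = w$. The forward implication (null-homotopy in the solid torus, pushed forward by $\tilde{u}$) is also the same in both arguments.

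The genuine gap is your converse, and it cannot be repaired along the lines you propose, because the statement you are trying to prove --- with ``extends to a map from a disc to $M$'' read as an arbitrary filling $f\colon D^2 \to M$ of the loop $\tilde{u}\circ C_{\mathbf{x}}$ --- is false in general. Take $M = S^3$ with the Hopf fibration over $\Sigma = S^2$, so $e(M) = -1 < 0$ and this is a legitimate contact Seifert fibered space with no orbifold points at all: every loop in $S^3$ bounds a disc, yet by Proposition~\ref{prop:defe} the defects of the regions sum to $e(M) \neq 0$, so curves $C_{\mathbf{x}}$ with nonzero defect certainly exist. The same failure occurs whenever the fiber is torsion in $\pi_1(M)$, e.g.\ in every lens space --- precisely the class of examples this paper generalizes. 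Concretely, the step of your sketch that breaks is the last one: when $\Sigma$ has genus zero, the glued sphere $S$ can have nonzero degree, and since $\int_\Sigma \Omega = -2\pi e(M) \neq 0$, the integral $\int_S \Omega$ need not vanish; there is no ``no essential spherical class'' fact to appeal to, and indeed a filling disc in $M$ that unwinds the fiber really can exist. The way out is the reading that the paper's proof implicitly uses: the extension is required to lie over $u$, i.e.\ one asks whether $C_{\mathbf{x}}$ bounds a disc in the solid torus $u^*M$ (whose pushforward by $\tilde{u}$ is then the disc in $M$). With that reading, both directions follow immediately from your identity $n(u;\mathbf{x}) = w$, since a loop in $D^2 \times S^1$ bounds a disc there exactly when its fiber-winding number vanishes --- which is how the paper concludes.
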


\begin{proof}
  We begin by pulling back the orbibundle $M$ over the admissible map $u$ to obtain an honest circle bundle $u^*M \stackrel{u^*\pi}{\to} \dd$.  Trivialize $u^*M$ using the bundle equivalence $\tau: u^*M \to D^2 \times S^1$, and let $p_2: D^2 \times S^1 \to S^1$ be the projection onto the second factor.  This allows us to view $p_2 \circ \tau \circ C_{\mathbf{x}}$ as a self-map of $S^1$; abusing terminology, we call the degree of this map $\deg C_{\mathbf{x}}$.  We will prove that 
  \begin{equation} \label{eq:defect-deg}
    n(u; \mathbf{x}) = \deg C_{\mathbf{x}},
  \end{equation}
  and the proposition will follow.

Form a new curve $C'_{\mathbf{x}}$ by concatenating $C_{\mathbf{x}}$ and a vertical curve that winds $-\deg C_{\mathbf{x}}$ times around the fiber.  Using a small vertical homotopy, we may perturb $C'_\mathbf{x}$ to be a section $s$ of $u^*M$ over $\partial \dd$; note that the integral of $\alpha$ over the images of these two curves is the same.  The degree (as defined above) of the section $s$ vanishes, so it extends to a section $\hat{s}: \dd \to u^*M$.  In particular, we have the relation $(u^*\pi) \circ \hat{s} = id$ in the following commutative diagram:

\begin{equation*}
  \xymatrix{
u^*M \ar[r]^{\tilde{u}} \ar[d]_{{u}^*\pi} & M \ar[d]^{\pi}  \\  
\dd \ar[r]_{{u}}  \ar@<-1ex>[u]_{\hat{s}}  & \Sigma}
\end{equation*}

We now calculate as follows:
\begin{align*}
  \int_\dd u^*\Omega &= \int_\dd \hat{s}^* (u^*\pi)^* u^*\Omega \\
  &= \int_\dd \hat{s}^* \tilde{u}^* d\alpha \\
  &= \int_{\partial \dd} \hat{s}^* \tilde{u}^* \alpha \\
  &= \int_{C_{\mathbf{x}}}  \tilde{u}^* \alpha - 2\pi \deg C_{\mathbf{x}}.
\end{align*}
Dividing by $2\pi$, recalling that $\kappa = -\frac{1}{2\pi} \Omega$, using Equation (\ref{eq:cx}) and rearranging yields (\ref{eq:defect-deg}).
\end{proof}

\begin{rem}\label{rem:homdef}
In fact, the proof of Proposition~\ref{prop:defect-obstruction} supports the following more general statement: for any admissible map whose domain has a single boundary component and no exceptional points, the first homology of the bundle $u^*M|_{\partial \dd}$ has a natural basis represented by a copy of the fiber $F$ and curve which bounds a section in $\dd \times S^1$.  The defect of $u$ is the coefficient of $[F]$ when $[C_{\mathbf{x}}]$ is expressed in terms of this basis.  When the image of $u$ contains exceptional points, we pass to an $a$-fold cover, where $a$ is the least common multiple of the $\alpha_i$ indices of the exceptional points.   In this case, the homology class of the lift of $C_{\mathbf{x}}$ determines  $(a)\big( n(u; \mathbf{x})\big)$.\end{rem}

\begin{cor} \label{cor:defect-properties}
  The defect of an admissible map $u$ without singular points in its domain is an integer strictly bounded above by the number of Reeb chords traversed in the positive direction.  Furthermore, we have the bound
  $$\sum_{x_i \text{ positive}} \ell(x_i) - \sum_{x_j \text{ negative}}  \ell(x_j) > 2\pi n(u; \mathbf{x}).$$
\end{cor}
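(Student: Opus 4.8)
The plan is to extract both conclusions of the corollary from the defect formula in Definition~\ref{defn:defect} together with the identification $n(u;\mathbf{x}) = \deg C_{\mathbf{x}}$ established in Proposition~\ref{prop:defect-obstruction}. Integrality comes for free: by construction $\deg C_{\mathbf{x}}$ is the degree of a self-map of $S^1$, namely $p_2 \circ \tau \circ C_{\mathbf{x}}$, so it is an integer, and Equation~(\ref{eq:defect-deg}) then forces $n(u;\mathbf{x}) \in \zz$. With integrality in hand, the two bounds will both follow once the curvature term in the defect is controlled.

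The next step is to pin down the sign of the curvature term. Because $u$ has no singular points in its domain, condition (1) of Definition~\ref{defn:adm} makes $u$ an orientation-preserving immersion of the genuinely two-dimensional disc $\dd$, and we recorded at the end of Section~\ref{ssec:orbibundle} that $\kappa$ is a negative multiple of the volume form on $\Sigma$. Hence $\int_\dd u^*\kappa$ is the $\kappa$-area of a two-dimensional immersed image and is strictly negative. Substituting into Equation~(\ref{eq:defect}) and splitting $\sum_i \epsilon_i \ell(x_i)$ into its positive and negative parts gives
\[ 2\pi\, n(u;\mathbf{x}) = 2\pi \int_\dd u^*\kappa + \sum_{x_i \text{ positive}} \ell(x_i) - \sum_{x_j \text{ negative}} \ell(x_j) < \sum_{x_i \text{ positive}} \ell(x_i) - \sum_{x_j \text{ negative}} \ell(x_j), \]
which is precisely the stated length bound.

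For the integer bound I would invoke two elementary length facts: each $x_i$ is a short Reeb chord, so $\ell(x_i) < 2\pi$, and each length is positive because $\alpha$ is a positive contact form. Discarding the negative chords (which only decreases the right-hand side) and bounding each positive length by $2\pi$ yields
\[ 2\pi\, n(u;\mathbf{x}) < \sum_{x_i \text{ positive}} \ell(x_i) < 2\pi \cdot \#\{x_i \text{ positive}\}. \]
Dividing by $2\pi$ gives $n(u;\mathbf{x}) < \#\{x_i \text{ positive}\}$, and since the left-hand side is an integer, this is the desired strict bound.

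The one point requiring genuine care—and the step I expect to be the main obstacle—is guaranteeing that $\int_\dd u^*\kappa$ is \emph{strictly} negative rather than merely nonpositive. This rests entirely on the orientation-preserving immersion hypothesis: away from the (absent) singular points $u$ is an immersion on an honestly two-dimensional domain, so $u^*(\mathrm{vol})$ is a positive multiple of the area form on $\dd$ and its image carries positive ordinary area; pairing against the negative volume form $\kappa$ then produces a strictly negative number. Strictness of this single integral is exactly what upgrades both displayed inequalities from weak to strict, and, in combination with integrality, what sharpens the real-valued estimate into the integer bound.
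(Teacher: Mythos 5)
Your proposal is correct and takes essentially the same route as the paper's (very terse) proof: integrality via the degree identification of Proposition~\ref{prop:defect-obstruction}, and both inequalities from the fact that the curvature term in Equation~(\ref{eq:defect}) is negative while each short chord has length less than $2\pi$. The care you take over the strict negativity of $\int_\dd u^*\kappa$ simply makes explicit a detail that the paper's two-sentence argument leaves implicit.
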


\begin{proof}
  That the defect is an integer follows from its characterization as the degree of a map in the preceding proof.  The upper bound comes from the fact that only positive terms in the expression (\ref{eq:defect}) for the defect come from the positively traversed chords, and each of these chords has length at most $2\pi$.  
\end{proof}

\subsection{Rotation of an Admissible Map}
\label{ssec:rotation}

In the previous sections, we assigned a defect to each admissible map $u$ (assuming a choice of Reeb chords at the corners), which measures the relative Euler number of the bundle $u^*M$ relative to a curve formed by Legendrian lifts of the boundary of the base and the chosen Reeb chords.  In this section we assign another relative Euler number to an admissible map, namely the rotation of a tangent vector field to $\Gamma \cup \mathbf{X}$ along the boundary $\partial \dd$. 

We begin by examining a version of the Poincar\'e-Hopf theorem for $2$-orbifolds with boundary.

\begin{lem} \label{lem:orbi-obstruction}
  The obstruction to extending a smooth, non-vanishing vector field tangent to the boundary of a $2$-orbifold $Q$ to the interior is given by
  $$r(Q) = \chi_{orb}(Q).$$
\end{lem}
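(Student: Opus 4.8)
The plan is to read $r(Q)$ as a relative Euler number of the orbifold tangent bundle $TQ$ and to compute it by reducing to the classical Poincar\'e--Hopf theorem for smooth surfaces-with-boundary, together with a purely local calculation at each cone point. Recall that for an oriented rank-two bundle over a compact surface-with-boundary, the obstruction to extending a nowhere-vanishing boundary section to the interior is the relative Euler number: one chooses any generic extension and sums the indices of its zeros, a count independent of the extension. The same holds in the orbifold setting, provided that at a cone point one passes to a uniformizing chart and weights the index of an equivariant zero by $1/\alpha_j$, the reciprocal of the order of the local group $\zz_{\alpha_j}$. I would take this weighted count to be $r(Q)$ and show it equals $\chi_{orb}(Q)$. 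Throughout, let $p_1,\dots,p_k$ denote the cone points, with orders $\alpha_1,\dots,\alpha_k$, and let $\chi(Q)$ denote the Euler characteristic of the underlying surface, as in \eqref{eq:orb-euler-char}.

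First I would excise a small orbifold disc $D_j$, modeled on $\tilde D_j/\zz_{\alpha_j}$, around each $p_j$, leaving a smooth compact surface $Q^{\circ}=Q\setminus\bigcup_j \mathring D_j$ with boundary $\partial Q \sqcup \bigsqcup_j \partial D_j$. Since the relative Euler number localizes, $r(Q)$ splits as the obstruction over $Q^{\circ}$ plus the obstructions over the individual discs $D_j$, provided the boundary sections agree on each gluing circle $\partial D_j$; there I would fix the radial field.

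The two kinds of pieces are then handled separately. Over the smooth surface $Q^{\circ}$, classical Poincar\'e--Hopf with boundary gives obstruction $\chi(Q^{\circ})$: the relative Euler number of the tangent bundle with respect to the positively oriented boundary-tangent field is the Euler characteristic, and along each circle $\partial D_j$ the radial field is homotopic through nowhere-zero fields to the tangent field (they differ by a constant rotation through $90^\circ$, hence have equal winding), so replacing one by the other leaves the count unchanged. A Mayer--Vietoris computation gives $\chi(Q^{\circ})=\chi(Q)-k$, since removing each open disc lowers the Euler characteristic by one. Over a single orbifold disc $D_j$ I would pass to the uniformizing chart: the orbifold tangent bundle pulls back to $T\tilde D_j$, an orbifold section is an equivariant section, and the radial boundary field lifts to the radial field on $\tilde D_j$. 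Because $\zz_{\alpha_j}$ fixes no nonzero tangent vector at the center, any equivariant extension must vanish there; the radial model does so with upstairs index $+1$, and weighting by $1/\alpha_j$ yields obstruction $1/\alpha_j$ for the disc.

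Combining the pieces gives
\[
r(Q)=\chi(Q)-k+\sum_{j=1}^{k}\frac{1}{\alpha_j}
=\chi(Q)+\sum_{j=1}^{k}\Bigl(\frac{1}{\alpha_j}-1\Bigr)
=\chi_{orb}(Q),
\]
matching \eqref{eq:orb-euler-char}. I expect the main obstacle to be the local cone-point computation: one must set up orbifold indices correctly in the uniformizing chart and justify the factor $1/\alpha_j$, together with the closely related observation that no nowhere-zero orbifold field can pass through a cone point, so that the contribution there is genuinely fractional. The localization of the relative Euler number and the homotopy invariance of winding along the auxiliary circles $\partial D_j$ are the remaining, routine, points.
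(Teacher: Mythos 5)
Your proof is correct, but it takes a genuinely different route from the paper. The paper's proof is essentially a citation: it invokes Seaton's orbifold Poincar\'e--Hopf theorem for orbifolds with boundary, which expresses the index sum as $\chi'_{orb}(Q) + \bar{Y}^*\Upsilon[\partial Q]$, and then observes that Sha's secondary Euler class term $\bar{Y}^*\Upsilon$ vanishes for boundary-tangent fields and that the inner orbifold Euler characteristic $\chi'_{orb}$ agrees with $\chi_{orb}$ in dimension two. You instead give a self-contained computation: excise a uniformized disc around each cone point, apply the classical Poincar\'e--Hopf theorem for surfaces with boundary to the smooth complement $Q^{\circ}$ (obtaining $\chi(Q)-k$, after the constant-rotation homotopy identifying radial and tangent boundary data on the cut circles), and compute each cone-point contribution equivariantly in the uniformizing chart, where the key observations are that $\zz_{\alpha_j}$ fixes no nonzero tangent vector (forcing a zero) and that the weighted count over the disc is pinned down as $\frac{1}{\alpha_j}$ times the upstairs relative Euler number, which is $1$ for radial boundary data. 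What the paper's approach buys is brevity and generality (Seaton's theorem holds in all dimensions, and the secondary-class formalism handles non-tangent boundary behavior); what yours buys is transparency --- it makes the fractional contributions in Equation~(\ref{eq:orb-euler-char}) visible as local equivariant indices and avoids the machinery of Sha's class and triangulation-based Euler characteristics entirely. The only points you assert rather than prove --- well-definedness of the orbifold-weighted index count and additivity of the relative obstruction under cutting along circles --- are exactly the standard facts the paper outsources to the literature, and your per-disc argument (the upstairs index sum is determined by the boundary data) in fact supplies the justification in the only case where an honest bundle argument is unavailable.
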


\begin{proof}
 The geometry behind this lemma comes from the Poincar\'e-Hopf Theorem for orbifolds with boundary, reduced to the $2$-dimensional case; see \cite{seaton:two-thms}.  For a $2$-orbifold  $Q$ with boundary and  a vector field $Y$ whose zeroes are isolated and lie in the interior of $Q$, the orbifold Poincar\'e-Hopf Theorem states
\begin{equation} \label{eq:ph}
  \ind_{orb}(Y) = \chi'_{orb}(Q) + \bar{Y}^*\Upsilon[\partial Q];
\end{equation}
we explain the constituent terms below.

The term $\ind_{orb}(Y)$ on the left is the sum of the  orbifold indices of $Y$ at its zeroes, which is precisely the obstruction to $Y$ being a non-vanishing vector field.  

The term $\chi'_{orb}(Q)$ is the so-called \dfn{inner orbifold Euler characteristic} of $Q$.  The ordinary orbifold Euler characteristic of Equation (\ref{eq:orb-euler-char}) may be computed using a triangulation $\mathcal{T}$ of the orbifold with the property that the order of the isotropy group of the local action is constant on the interior of every simplex; see \cite{moerdijk-pronk} for a proof that such a triangulation exists.  If we let $N_\sigma$ denote the order of the isotropy group for the simplex $\sigma$, then we may compute that
$$\chi_{orb}(Q) = \sum_{\sigma \in \mathcal{T}} \frac{(-1)^{\dim \sigma}}{N_\sigma}.$$
The inner Euler characteristic is defined similarly, except the sum ignores the simplices that are completely contained in the boundary.
It is easy to check, however, that $\chi'_{orb}(Q) = \chi_{orb}(Q)$ in two dimensions.

The final ingredient of the Poincar\'e-Hopf Theorem is an orbifold version of Sha's secondary Euler class \cite{sha:secondary-euler}, denoted $\Upsilon$, that lies in the cohomology of the unit sphere bundle of $TQ|_{\partial Q}$. In the spirit of Chern, this class is defined using connection and curvature forms for an $SO(n)$ connection on the tangent bundle.  The actual term that appears is the evaluation of the pullback of $\Upsilon$ by the vector field $Y$ (thought of as a section $\bar{Y}$ of the unit sphere bundle) on the fundamental class of $\partial Q$.  In the case that $Y$ is tangent to the boundary, however, Sha notes that $\bar{Y}^*\Upsilon$ vanishes; see also \cite{seaton:thesis}.
\end{proof}

As above, let $u: \dd \to \Sigma$ be an admissible map, and let $Z$ be a smooth non-vanishing vector field on $\partial \dd$ that is tangent to $\partial \dd$.  On $\partial \dd \setminus \{z_0, \ldots, z_n\}$, we have $u_*Z \in T(\Gamma \cup \mathbf{X})$, and hence the rotation of $Z$ measures the rotation of a vector field tangent to $\Gamma \cup \mathbf{X}$ along the image of $u$.  The only issue occurs at the boundary marked points $z_i$.  We make corrections at those points as follows: once we have fixed a complex structure on $\Sigma$ in a neighborhood of each double point of $\Gamma$ with the property that the strands of $\Gamma$ intersect orthogonally, we see that near each $z_i$, $u_*Z$ has a discontinuity which is an integer multiple of $\pi/2$.  To pin down this multiple, we examine the number $m_i$ of quadrants of $\big(\Sigma, \Gamma \cup \mathbf{X} \big)$ that the image of $u$ covers near $z_i$; the multiple may be easily computed to be $\frac{1}{4}(m_i-2)$.  We define the \dfn{rotation number} $r(u)$ of an admissible map $u$ to be the obstruction to extending the vector field $Z$ to a vector field on all of $\dd$, corrected by the corner terms $\sum_{i=0}^m \frac{1}{4}(m_i - 2)$; reversing orientation negates the rotation number.  Thus, based on Lemma~\ref{lem:orbi-obstruction}, we have the following computation:
 \begin{equation} \label{eq:rotation}
    r(u) = \epsilon_u \left( \chi_{orb}(\dd) + \sum_{i=0}^m \frac{1}{4}(m_i - 2) \right),
  \end{equation}
  where $\epsilon_u = \pm 1$ according to whether $u$ preserves or reverses orientation, and $m_i$ is the number of quadrants of $\big(\Sigma, \Gamma \cup \mathbf{X} \big)$ that the image of $u$ covers near $z_i$.

As with the defect, we may assign a rotation to each region $R\in \Sigma\setminus \ \big( \Gamma  \cup \mathbf{X}\big)$.  Let $u_R:\dd\rightarrow R$ be an admissible map with the property that the pre-image of each exceptional point with Seifert invariants $(\alpha_i, \beta_i)$ is an orbifold point of order $\alpha_i$.  Then the rotation of $R$ is the rotation of the map $u_R$.

\section{The Differential Graded Algebra}
\label{sec:dga}
In this section we define the differential algebra $\alg$ associated to labeled Lagrangian diagram for a Legendrian knot in a contact Seifert fibered space.

\subsection{The Algebra}
\label{sec:alg}

Given a Legendrian knot $K$ in a contact Seifert fibered space $M$, we consider an algebra generated by the short Reeb chords of the pair $(M,K)$.  We define the \dfn{(low-energy) algebra} of the labeled Lagrangian projection $\Gamma$ with $n$ double points:

\begin{defn} \label{defn:alg}
  Let $\alg_\Gamma$ be the semi-free unital associative algebra with  coefficients in $\zz_2[\qq]$ generated by the letters $\{a_1, b_1,  \ldots, a_n, b_n\}$.
\end{defn}

\subsection{The Grading}
\label{ssec:grading}

In order to assign a grading to each generator of $\alg_\Gamma$, we first introduce the notion of a \dfn{formal capping surface}.  

A formal capping surface is a vector in $\mathbb{Z}^{|\Sigma\setminus \Gamma|}$, or, equivalently, an assignment of an integer to each region of $\Sigma\setminus \Gamma $.  The integer assigned to the region $R$ is called the \dfn{multiplicity} of $R$ in the formal capping surface.

\begin{defn}If $S=(c_1, c_2, ... c_n)$ is a formal capping surface, the \dfn{defect} of $S$ is the sum of the defects of the regions $R_j$, weighted by multiplicity:
\[ n(S)=\sum_j c_j n(R_j).\]  
The \dfn{rotation} of $S$ is the sum of the rotations of the regions $R_j$, weighted by multiplicity:
\[ r(S)=\sum_j c_j r(R_j).\]
\end{defn}


\subsubsection{Grading generators}\label{sec:capp} 

In this section we  assign a formal capping surface to each short Reeb chord.  Although this depends on a number of choices, we will show in Section~\ref{ssec:choices} that different sets of choices produce isomorphic differential graded algebras. 

Given a generator $x$, choose an immersed loop $\gamma\subset \Gamma$ which is smooth away from a corner at the crossing associated to $x$. If $x$ is an $a$-type generator, orient $\gamma$ to positively bound a quadrant labeled $x^+$ near the corner.  If $x$ is a $b$-type generator,  orient $\gamma$ to negatively bound a quadrant labeled $x^-$.  

For clarity of exposition, we begin by considering the case where $\Sigma$ is simply connected.  The idea is to apply a variant of Seifert's algorithm to $\gamma$ in order to construct a formal capping surface. Resolve each crossing of $\gamma$ so that the result is a disjoint union $\coprod \gamma_j$ of oriented simple closed curves with corners.  For each $j$, choose a component of $\Sigma\setminus \gamma_j$.  This \dfn{Seifert region} is a union of regions in $\Sigma\setminus \Gamma$.  If the orientation of $\gamma_j$ agrees with the boundary orientation of the Seifert region, assign each constituent region a multiplicity of $1$ and the complementary regions a multiplicity of $0$.  If the boundary orientation agrees with that of $-\gamma_j$, assign each constituent region a multiplicity of $-1$ and the complementary regions a multiplicity of $0$.  In each region, sum the contributions coming from each of the $\gamma_j$.  The resulting multiplicity in each region defines a formal capping surface which we denote $S_x$.

\begin{figure}[ht]
\begin{center}
\scalebox{.6}{\includegraphics{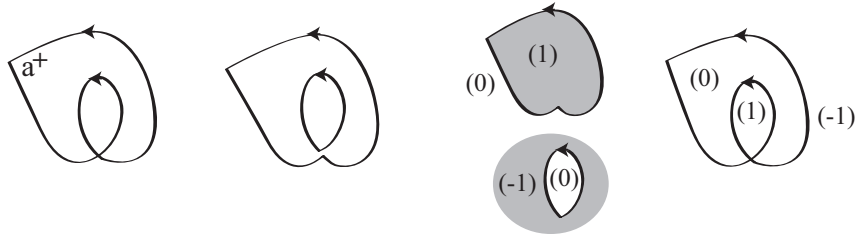}}
\caption{Constructing a formal capping surface $S_a=(1,0,-1)$.}
\end{center}
\end{figure} 

\begin{defn}\label{def:gengr}  The grading of the generator $x$ with formal capping surface $S_x$ is defined by
\[ |a|=2\big(r(S_a)+\mu n(S_a)-\frac{1}{4}\big)\]
\[|b|=2\big(r(S_b)+\mu n(S_b)+\mu-\frac{1}{4}\big),\] 
where $\mu=-\frac{\chi_{orb}(M)}{e(M)}$.
\end{defn}

From a computational perspective, we need only choose one capping surface per double point.  In particular, given a capping surface $S_a$, we may choose the capping surface $S_b$ to be the orientation reverse of $S_a$.  With this choice, we have
$$|b| = 2 \mu -1 -|a|.$$

When $\Sigma$ has non-trivial first homology, the resolution of the capping path $\gamma$ may result in simple closed curves $\gamma_j$ which are non-separating on $\Sigma$.  In this case, we note that each $\gamma_j$ is homologous to a unique linear combination $\sum_j a_{i_j}X_i$.  In order to assign a formal capping surface to $\gamma_j$, include $-a_{i_j}$ parallel copies of $X_i$ into the diagram and resolve any new double points as above.  The resulting collection of disjoint oriented simple closed curves is null-homologous, and hence partitions $\Sigma$ into positively and negatively bounded subsurfaces.  One may then choose, for example, the union of the positively bounded subsurfaces, though there are other possible choices here. As in the simply connected case, these subsurfaces assign to each region of $\Sigma\setminus \big(\Gamma \cup \mathbf{X} \big)$ a multiplicity of $1$, $0$, or $-1$, thus assigning a formal capping surface to the capping path $\gamma_j$.

\begin{rem}
  In \cite{egh}, the recipe for defining a grading in an SFT-type theory requires a choice of trivialization of the contact structure $\xi$ over a basis for $H_1(M)$.  Though it appears that we have made no such choice above, the choices are implicit in our construction.  The lifts of the $X_i$, together with the regular and exceptional fibers, generate $H_1(M)$.  We have trivialized $\xi$ over the (Legendrian) lift of each $X_i$ using the tangent vectors to that lift.  Similarly, the choice of $\mu$ implicitly trivializes $\xi$ over the regular and exceptional fibers.
\end{rem}

\subsection{The Differential}
\label{ssec:diff}

The definition of the differential is similar in philosophy to that of Chekanov's combinatorial theory for Legendrian contact homology in the standard contact $\rr^3$ \cite{chv} and builds on the extensions in \cite{joan:leg-in-lens, s1bundles}.  As before, the differential will be defined by a count of discs in $\Sigma$ whose boundaries lies on $\Gamma$, with marked points on the boundary of the disc mapping to double points of $\Gamma$.  As in \cite{s1bundles}, the topology of the space $M$ must be taken into account via the defects, but the new feature in the current situation is that the discs need not --- in fact,
\emph{cannot} --- be immersed at the projections of the exceptional fibers.

Recall the definition of an admissible map (Definition~\ref{defn:adm}).
 
\begin{defn} \label{defn:disc} Given labels $x$ and $y_1, \ldots, y_k$ taken from the generators $\{a_i, b_i\}$, the set $\Delta(x;  y_1, \ldots, y_k)$ consists of all admissible maps of marked discs $u$, up to  equivalence, such that
  \begin{enumerate}
  \item the map $u$ has no singular points in its domain;
  \item the map $u$ sends a neighborhood of $z_0$ to a quadrant
    labeled $x^+$ in $\Sigma$; and
  \item the map $u$ sends a neighborhood of the marked boundary point $z_i$, $i>0$, to a quadrant labeled $y_i^-$ in $\Sigma$.
  \end{enumerate}
\end{defn}

Suppose that $u\in \Delta(x;  y_1, \ldots, y_k)$.  Let $S_{j}$ denote a capping surface for $y_{i_j}$, and let $S_u$ be the formal capping surface whose multiplicity at a non-exceptional point $c$ is $|u^{-1}(c)|$.  In each region of $\Sigma\setminus ( \Gamma \cup \mathbf{X})$, sum the multiplicities of the formal capping surfaces $S_u$, $\{S_j \}_{j>0}$, and $-S_x$. This sum defines a new formal capping surface $S$, and we write
\[ |u|=  r(S)+\mu n(S).\] 

\begin{defn}\label{defn:diff}
  The \dfn{differential} $\df: \alg_{\Gamma} \to \alg_{\Gamma}$ is defined on the generators  by
  \begin{equation} \label{eq:d} 
  \df x = \sum_{\mathbf{y}}  \sum_{\substack{[u] \in \Delta(x,\mathbf{y}) \\ n(u; x,
        \mathbf{y}) = 0}} y_{1} \cdots y_{k} t^{-|u|}.
  \end{equation}
  \end{defn}

As in Definition~\ref{defn:diff}, we extend to $\mathcal{A}_{\Gamma}$ using linearity and the Leibniz rule.  

Define the grading of the coefficient $t^q$ by
\[ |t^q|=q.\]

The following special case of Corollary~\ref{cor:defect-properties} shows that the differential \df\ is well-defined (that is, the sum in (\ref{eq:d}) is finite).

\begin{lem} \label{lem:stokes}If $[u]\in \Delta(x,\mathbf{y})$ represents a summand of $\df x$, then 
\[ \ell(x)-\sum_i \ell(y_i)>0. \]
  \end{lem}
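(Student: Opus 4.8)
The plan is to derive Lemma~\ref{lem:stokes} directly from Corollary~\ref{cor:defect-properties} by applying it in the special case where exactly one Reeb chord is traversed in the positive direction. The key observation is that for a disc $[u] \in \Delta(x, \mathbf{y})$, the definitions in Section~\ref{ssec:adm} fix the orientations of all the chords: the marked point $z_0$ maps to a quadrant labeled $x^+$, so $x$ is traversed positively (i.e.\ $\epsilon_0 = +1$), while each of the remaining marked points $z_i$ with $i > 0$ maps to a quadrant labeled $y_i^-$, so each $y_i$ is traversed negatively ($\epsilon_i = -1$). Thus among the chords $\{x, y_1, \ldots, y_k\}$ appearing at the corners of $u$, exactly one — namely $x$ — is positive, and all the $y_i$ are negative.

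\medskip

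First I would invoke the second conclusion of Corollary~\ref{cor:defect-properties}, which gives
\[
  \sum_{x_i \text{ positive}} \ell(x_i) - \sum_{x_j \text{ negative}} \ell(x_j) > 2\pi \, n(u; \mathbf{x}).
\]
Substituting the sign assignment above, the only positive term is $\ell(x)$ and the negative terms are precisely the $\ell(y_i)$, so the left-hand side becomes $\ell(x) - \sum_i \ell(y_i)$. Next I would use the hypothesis that $[u]$ represents a summand of $\df x$: by Definition~\ref{defn:diff}, the only discs contributing to the differential are those satisfying the constraint $n(u; x, \mathbf{y}) = 0$. Therefore the right-hand side $2\pi\, n(u;\mathbf{x})$ vanishes, and the inequality collapses to exactly $\ell(x) - \sum_i \ell(y_i) > 0$, which is the claim.

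\medskip

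The main point to verify carefully — and the only real content beyond a direct citation — is the bookkeeping of the chord orientations, namely that the defect formula~(\ref{eq:defect}) produces a single positive contribution $\ell(x)$ against a collection of negative contributions $-\ell(y_i)$. This is precisely what conditions (2) and (3) of Definition~\ref{defn:disc} encode, together with the convention that $\epsilon_i = +1$ or $-1$ according to whether the chord is traversed positively or negatively (as fixed just before Definition~\ref{defn:defect}). I do not expect any genuine obstacle here: since Corollary~\ref{cor:defect-properties} already packages the Stokes-type inequality $\sum_{\text{pos}} \ell - \sum_{\text{neg}} \ell > 2\pi\, n(u;\mathbf{x})$ for an arbitrary admissible map without singular points, the lemma is simply the restriction of that inequality to the orientation pattern and the vanishing-defect condition dictated by the differential. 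The statement is genuinely a corollary, and the proof amounts to identifying which terms are positive and which are negative, then setting $n(u;\mathbf{x}) = 0$.
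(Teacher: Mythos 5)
Your proof is correct and takes essentially the same approach as the paper: the paper offers no separate argument for Lemma~\ref{lem:stokes} beyond declaring it ``the following special case of Corollary~\ref{cor:defect-properties},'' and your write-up simply makes that reduction explicit. The orientation bookkeeping ($\epsilon_0 = +1$ at the $x^+$ corner, $\epsilon_i = -1$ at the $y_i^-$ corners, per Definition~\ref{defn:disc}) together with the defect-zero condition in Definition~\ref{defn:diff} is exactly the intended specialization.
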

    
Definition~\ref{defn:diff} makes $(\alg, \df)$ into a differential graded algebra (DGA):

\begin{thm} \label{thm:d2} 
  The differential \df\ has degree $-1$ and satisfies $\df^2=0$.  
\end{thm}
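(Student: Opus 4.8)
The plan is to establish the two assertions separately: that \df\ lowers the grading by one, and that $\df^2=0$. For the degree statement I would fix an arbitrary disc $[u]\in\Delta(x;y_1,\dots,y_k)$ contributing to $\df x$ (so $n(u;x,\mathbf{y})=0$) and check that the output monomial $y_1\cdots y_k\,t^{-|u|}$ has grading $|x|-1$, i.e.\ that $\sum_i|y_i|-|u|=|x|-1$. Since $r$ and $n$ are linear on formal capping surfaces and $|u|=r(S)+\mu\,n(S)$ with $S=S_u+\sum_{j>0}S_j-S_x$, substituting the grading formulas of Definition~\ref{def:gengr} reorganizes the identity into a single geometric relation for the disc itself: $r(S_u)+\mu\,n(S_u)$ must equal the sum of the corner quantities $r(S_{y_i})+\mu\,n(S_{y_i})$ at the negative corners minus the quantity $r(S_x)+\mu\,n(S_x)$ at the positive corner, up to the $a$-- and $b$--type constants of Definition~\ref{def:gengr} and an extra $+1$. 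I would verify this relation from the orbifold Poincar\'e--Hopf formula (\ref{eq:rotation}): because $u$ has no singular points in its domain, $\chi_{orb}(\dd)=1$ and $\epsilon_u=+1$, and the corner corrections $\frac{1}{4}(m_i-2)$ at the positive corner $z_0$ (quadrant $x^+$) versus the negative corners $z_i$ (quadrants $y_i^-$), together with the definition of $\mu$ coupling the defect weighting to the Euler data, produce the net shift of exactly $+1$. This step is index bookkeeping, but the orientation and the $a$-- and $b$--type corner conventions must be tracked with care.

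The substance of the theorem is $\df^2=0$, which I would approach by expanding $\df^2 x$ as a sum over pairs of admissible discs: a disc $u_1\in\Delta(x;\mathbf{y})$ with a negative corner at some generator $w$, together with a disc $u_2\in\Delta(w;\mathbf{z})$ capping that corner, the pair contributing the word obtained by splicing $\mathbf{z}$ into $\mathbf{y}$ with coefficient $t^{-|u_1|-|u_2|}$. The strategy is to realize each such glued pair as an endpoint of a one--parameter family of admissible discs with positive corner $x$ and grading shift $-2$, to show that this family is a compact $1$--manifold, and to conclude that its mod--$2$ boundary count vanishes, so the broken configurations cancel in pairs over \zz. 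Concretely I would prove a combinatorial gluing/splitting statement: a broken pair $(u_1,u_2)$ joined at $w$ sits at one end of such a family, and the configuration at its opposite end is a second broken pair yielding the identical monomial---the same word and the same power of $t$---which supplies the involution forcing the cancellation. Lemma~\ref{lem:stokes} provides the length inequality that keeps every sum finite and prevents the families from having escaping ends.

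The principal obstacle, and the reason this argument merits its own section, is controlling the degenerations in the presence of the exceptional fibers. Away from the orbifold points the splitting analysis parallels the circle--bundle case of \cite{s1bundles}; but, as noted before Definition~\ref{defn:disc}, an admissible disc cannot be immersed over an exceptional point and is instead forced to branch--cover it in the local model of Definition~\ref{defn:adm}(3). Consequently the one--parameter families may break through configurations involving such branchings---a phenomenon with no analogue in \cite{s1bundles}---and I would need to show that these neither create nor destroy boundary points of the families, so that the pairwise cancellation survives. I would also have to confirm that the defect--zero condition is additive under gluing, i.e.\ that $n(u_1;\cdot)=n(u_2;\cdot)=0$ holds exactly when the glued disc has vanishing defect, so that precisely the discs counted by \df\ participate. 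Here Proposition~\ref{prop:defect-obstruction}, Remark~\ref{rem:homdef}, and Corollary~\ref{cor:defect-properties}, which identify the defect with a homological degree and bound it below, are the essential tools: they rule out spurious contributions and confine the genuinely new behavior to the controlled branchings at the exceptional points.
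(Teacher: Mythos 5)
Your proposal matches the paper's proof in both structure and substance: the degree claim is verified exactly as you describe, by additivity of rotation and defect over the formal capping surface $S = S_u + \sum_j S_j - S_x$ together with the rotation formula (\ref{eq:rotation}) and the $a$-/$b$-type corrections, and $\df^2=0$ is proved by the Chekanov-style mod-2 cancellation in which every broken pair glues uniquely to an intermediate object that splits in exactly two ways, with defect additivity plus Corollary~\ref{cor:defect-properties} (and the chord-length bound ruling out degenerate gluings) doing precisely the work you assign to them. The only difference is terminological: the paper replaces your heuristic ``one-parameter family'' with a concrete combinatorial object, the \emph{obtuse disc} (an admissible disc covering three quadrants at one corner), whose boundary splitting along $\Gamma$ --- which avoids the exceptional points, so the branching behavior you worried about never interacts with the gluing/splitting analysis --- supplies the involution.
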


Further, this differential algebra gives rise to a Legendrian invariant.  The precise statement of invariance involves the algebraic notion of ``stable tame isomorphism''; see \cite{chv, ens}.  For now,  note that stable tame isomorphism implies quasi-isomorphism.

\begin{thm} \label{thm:invariance}
  The stable tame isomorphism type of $(\alg, \df)$ is invariant under Legendrian isotopies of $K$.  \end{thm}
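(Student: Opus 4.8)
The plan is to follow the standard strategy for combinatorial invariance of Legendrian contact homology DGAs \cite{chv, ens, s1bundles, joan:leg-in-lens}: decompose an arbitrary Legendrian isotopy into a finite list of elementary moves on the labeled Lagrangian diagram, and then exhibit, for each move, a stable tame isomorphism between the DGAs before and after. I would first verify that a generic one-parameter family of Legendrian knots projects to a family of diagrams $(\Sigma, \Gamma_t)$ that differs from a combinatorially constant family only at finitely many times, at each of which exactly one of the following occurs: a \emph{triple-point move}; a \emph{birth or death} of a canceling pair of crossings; a passage of $\Gamma$ transversally across one of the curves $X_i$; or a passage of an exceptional point of $\Sigma$ across a strand of $\Gamma$. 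The last move is new to this setting: although $K$ never meets an exceptional fiber in $M$, its projection may sweep across the image of such a fiber in $\Sigma$.

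The key simplification is a \emph{localization principle}: each triple-point and each birth/death move can be arranged to take place inside a small disc $U \subset \Sigma$ chosen disjoint from the exceptional points and from $\mathbf{X}$. Over such a $U$ the orbibundle $M \to \Sigma$ restricts to a trivial circle bundle, so the analysis of the admissible discs modified by the move reduces to the circle-bundle computation of \cite{s1bundles}. Moreover, because the defect of an admissible map is a genuine relative Euler number (Proposition~\ref{prop:defect-obstruction}) and is additive under the cut-and-paste of discs along $\partial U$, a disc that passes through $U$ but covers exceptional points elsewhere contributes identical defect, rotation, and hence identical grading and $t$-weight $t^{-|u|}$ before and after; only its local piece in $U$ changes. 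This is what lets me import the local disc correspondences of the immersed theory without reproving them.

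With this in hand the individual moves are handled as follows. A safe (combinatorially constant) isotopy induces the identity on generators, a graded chain isomorphism since the set of defect-zero rigid discs in $\Delta(x;\mathbf{y})$ is unchanged. For the passage of $\Gamma$ across a curve $X_i$, the individual region defects and the formal capping surfaces change, but the gradings and the differential are expressed entirely through the defects $n(u;\mathbf{x})$ of admissible maps, which are unaffected; I would check that the net effect is a tame relabeling preserving the grading. For the triple-point move the three crossings persist, so the generators are in bijection; following \cite{chv, ens} I would build an explicit tame automorphism $\phi$ realizing the local disc correspondence in $U$ and verify that it is a chain map preserving both the rational grading and the exact powers of $t$, the latter because the local discs entering and leaving $\Delta(x;\mathbf{y})$ carry matching defect and rotation. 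For a birth/death move, the local bigon created in $U$ has vanishing defect and the expected rotation, yielding a short generator $e$ with $\df e = e' + (\text{higher order})$; this is exactly an algebraic stabilization, and I would confirm the grading relation $|e| = |e'| + 1$ from the local rotation computation together with Definition~\ref{def:gengr}.

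The main obstacle I expect is the triple-point move: one must show that the correspondence of defect-zero rigid discs across the move is a bijection that simultaneously respects the $\qq$-valued grading and the precise $t^{-|u|}$ weights, while ruling out interference from discs that branch over exceptional points or wind nontrivially in the fiber. The localization principle is the crucial tool, reducing the genuinely new orbifold and topological phenomena to bookkeeping outside $U$ that is preserved by the move, so that inside $U$ the argument is the familiar one. Care is also needed to confirm that the defect-zero condition of Definition~\ref{defn:diff} is preserved under the local surgery; this follows from the additivity of the defect over $\partial U$ and the fact that the surgered local pieces have equal defect, so that Corollary~\ref{cor:defect-properties} and Lemma~\ref{lem:stokes} continue to guarantee finiteness and rigidity after each move.
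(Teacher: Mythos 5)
Your reduction to elementary moves is reasonable, and your treatment of the triple-point, birth/death, and $X_i$-crossing moves matches the paper's strategy of importing the circle-bundle arguments of \cite{s1bundles} by localizing away from the exceptional points. But there is a genuine gap: you correctly list the passage of a strand of $\Gamma$ across an exceptional point as a move that is new to this setting, and then you never analyze it. This move is precisely the one your localization principle cannot touch --- by definition it happens \emph{at} an exceptional point, so no disc $U$ disjoint from the exceptional points contains it --- and it is the heart of the paper's proof, occupying essentially all of Section~\ref{sec:invariance}. Your assessment that the triple-point move is the main obstacle is inverted: the paper disposes of the Reidemeister moves in a sentence by citing \cite{s1bundles}, and devotes five subsections to the exceptional-point move.

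Concretely, when a segment of $K$ sweeps across an exceptional fiber with invariants $(\alpha,\beta)$, the diagram changes by an $\alpha$-teardrop move creating $\alpha-1$ new crossings, because the Legendrian lift of the sweeping strand wraps around the short exceptional fiber; this is nothing like a Reidemeister II move. Handling it requires: (i) a proof that gradings are preserved (Proposition~\ref{prop:isotinv}, a rotation computation over the teardrop regions); (ii) an $(\alpha-1)$-fold algebraic stabilization of $\alg^-$, not a single stabilization; (iii) a disc correspondence (Lemma~\ref{lem:xset}) that is genuinely many-to-one --- a single disc contributing to $\partial^-$ whose boundary covers the isotoping strand $n\geq 2$ times corresponds to a \emph{collection} of discs for $\partial^+$ (a ``special $x$-set''), obtained by cutting the lift along the isotopy region and capping with triangles at the new crossings, with the combinatorics governed by the partitions $\mathcal{I}(k)$ and the length estimates of Definition~\ref{def:partition}; and (iv) an inductive construction, ordered by chord length, of the tame isomorphism $s$ together with a Chekanov-style correction automorphism $g$. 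None of this follows from additivity of the defect or from the local analysis of \cite{s1bundles}. As written, your argument proves invariance only for isotopies whose projections avoid the exceptional points, which is strictly weaker than the theorem.
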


Of course, this means that the standard objects derived from the differential algebra are also invariants: the augmentation number \cite{ns:augm-rulings},
the set of linearized (co)homologies \cite{chv} and their product structures \cite{products}, the characteristic algebra \cite{lenny:computable}, etc.

These two theorems combine to prove Theorem~\ref{thm:dga}; their proofs will appear in Sections~\ref{sec:d2} and \ref{sec:invariance}, respectively.


\subsection{Independence of choices}
\label{ssec:choices}

The definition of the grading and the upcoming arguments in Section~\ref{sec:invariance} depend heavily on the formal capping surface assigned to each Reeb chord.  We show here that the algebras associated to different choices of formal capping surface are isomorphic as differential graded algebras.

\begin{lem}\label{lem:surfacechoice} If $x$ is a generator with fixed capping path $\gamma$, the grading of $x$ is independent of the choice of formal capping surface compatible with $\gamma$.
\end{lem}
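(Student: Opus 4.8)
The plan is to reduce the independence statement to a single identity: that the functional $S \mapsto r(S) + \mu\, n(S)$ vanishes on the ``fundamental'' formal capping surface $S_\Sigma = (1,\dots,1)$ that assigns multiplicity $1$ to every region. First I would isolate exactly what ``compatible with $\gamma$'' permits. The oriented resolution of the fixed capping path $\gamma$ is canonical, so the resolved curves $\coprod\gamma_j$ (and, when $\pi_1(\Sigma)\neq 1$, the inserted parallel copies of the $X_i$) are determined; the only freedom is the choice of Seifert region on each side. Viewing a formal capping surface as a cellular $2$-chain for the decomposition of $\Sigma$ by $\Gamma\cup\mathbf{X}$, I would check that every choice compatible with $\gamma$ has the same oriented boundary $1$-chain: replacing the positively bounded component $A$ of $\Sigma\setminus\gamma_j$ by its complement $B$ turns the multiplicities $(+1\text{ on }A,\,0\text{ on }B)$ into $(0\text{ on }A,\,-1\text{ on }B)$, and both chains have boundary $\gamma_j$. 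Hence any two compatible formal capping surfaces differ by a $2$-cycle, and since $\Sigma$ is closed and orientable, $H_2(\Sigma;\zz)=\zz\langle S_\Sigma\rangle$, so that difference is an integer multiple of $S_\Sigma$.

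Because $n(\cdot)$ and $r(\cdot)$ are by definition linear in the region multiplicities, it then suffices to evaluate them on $S_\Sigma$. The defect is immediate from Proposition~\ref{prop:defe}: $n(S_\Sigma)=\sum_j n(R_j)=e(M)$. For the rotation I would prove $r(S_\Sigma)=\sum_j r(R_j)=\chi_{orb}(\Sigma)$. Summing Equation~(\ref{eq:rotation}) over the regions (all orientation-preserving, with full orbifold order $\alpha_j$ at each exceptional point), each region contributes $\chi_{orb}(\dd)+\sum_i\tfrac14(m_i-2)$; the corner corrections at a crossing are exactly what is needed for the boundary rotations of the adjacent regions to cancel in pairs, so the regional sum reassembles into the obstruction to a nonvanishing field on the closed orbifold $\Sigma$, which Lemma~\ref{lem:orbi-obstruction} identifies as $\chi_{orb}(\Sigma)$. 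Concretely, in the smooth case the face and corner bookkeeping collapses to $F-V=\chi(\Sigma)$ using $4$-valence of $\Gamma\cup\mathbf{X}$, and the exceptional points supply precisely the $\sum_j(\tfrac{1}{\alpha_j}-1)$ correction of Equation~(\ref{eq:orb-euler-char}).

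Combining these with the defining relation $\mu\, e(M)=-\chi_{orb}(\Sigma)$ yields
\[
  r(S_\Sigma)+\mu\, n(S_\Sigma)=\chi_{orb}(\Sigma)+\mu\, e(M)=0.
\]
Therefore replacing $S_x$ by $S_x+k\,S_\Sigma$ leaves $r(S_x)+\mu\, n(S_x)$ unchanged, and since the grading in Definition~\ref{def:gengr} depends on the capping surface only through this combination (the remaining terms $-\tfrac14$ and $\mu$ being constants of the manifold), the grading $|x|$ is independent of the choice. The main obstacle is the rotation computation $r(S_\Sigma)=\chi_{orb}(\Sigma)$: one must verify that the corner corrections at the crossings and the orbifold indices at the exceptional points assemble into the orbifold Euler characteristic of the \emph{closed} base rather than of a disc or of $\Sigma$ cut along $\Gamma\cup\mathbf{X}$. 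Once that is in hand, the rest is linearity together with the two cited Propositions.
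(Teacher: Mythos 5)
Your proof is correct and follows essentially the same route as the paper's: both arguments reduce to the single identity $r(S_\Sigma)+\mu\, n(S_\Sigma)=\chi_{orb}(\Sigma)+\mu\, e(M)=0$ for the fundamental surface $S_\Sigma$, using Proposition~\ref{prop:defe} for the defect and an Euler-characteristic/corner-correction count for the rotation (the paper states the latter as $r(S_P)-r(S_N)=\chi_{orb}(\Sigma)$ for the two complementary choices along a resolved component, which is the same identity since those choices differ exactly by $S_\Sigma$). The only difference is packaging: your explicit reduction showing that any two compatible formal capping surfaces differ by an integer multiple of $S_\Sigma$ treats separating and non-separating components uniformly, whereas the paper argues one separating component at a time and defers the non-separating case to ``a similar argument.''
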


It follows that the grading of each generator depends only on the choice of capping path.  Deferring the proof of the lemma for the moment, we will now show that the graded algebras associated to different capping paths are nevertheless isomorphic. More precisely, suppose that $\sigma$ and $\tau$ are two functions which assign gradings to generators by associating a capping path and compatible formal capping surface to each Reeb chord.  Denote the associated differential graded algebras by $\mathcal{A}_{\tau}$ and $\mathcal{A}_{\sigma}$, respectively.  

\begin{thm} $\mathcal{A}_{\tau}$ and $\mathcal{A}_{\sigma}$ are tame isomorphic as differential graded algebras.  
\end{thm}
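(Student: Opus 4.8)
The plan is to realize the comparison isomorphism as a \emph{diagonal} change of variables that rescales each generator by an invertible monomial in $t$, chosen precisely to absorb the grading shift produced by changing the capping data. First I would reduce to the essential case. By Lemma~\ref{lem:surfacechoice}, for a fixed capping path the quantity $G(S_x):=r(S_x)+\mu\,n(S_x)$ appearing in Definition~\ref{def:gengr} is independent of the compatible formal capping surface; since both the grading and the differential coefficients $|u|=G\bigl(S_u+\sum_{i>0}S_i-S_x\bigr)$ of Definition~\ref{defn:diff} are assembled from these quantities, the whole differential graded algebra depends on $\sigma$ and $\tau$ only through the capping \emph{paths} they assign. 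In particular the generating set $\{a_i,b_i\}$, the sets $\Delta(x;\mathbf{y})$, and the condition $n(u;\mathbf{x})=0$ selecting the contributing discs are all manifestly capping-independent, so $\mathcal{A}_\tau$ and $\mathcal{A}_\sigma$ differ only in the gradings of the generators and, a priori, in the powers of $t$ weighting the terms of $\df$.

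Next I would define the map. Set $d_x:=|x|_\tau-|x|_\sigma=2\bigl(G(S_x^\tau)-G(S_x^\sigma)\bigr)$, which is well defined by the reduction above, and let $\Phi\colon\mathcal{A}_\tau\to\mathcal{A}_\sigma$ be the unique $\zz_2[\qq]$-algebra homomorphism fixing the coefficient ring and sending each generator $x\mapsto t^{d_x}x$. Since every $t^{d_x}$ is a unit in $\zz_2[\qq]$, $\Phi$ is invertible with $\Phi^{-1}(x)=t^{-d_x}x$, and it preserves grading by construction because $|t^{d_x}x|_\sigma=d_x+|x|_\sigma=|x|_\tau$.

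The substance is checking that $\Phi$ intertwines the two differentials, and this is where I expect the only real subtlety. Fix a generator $x$ and a disc $u\in\Delta(x;\mathbf{y})$ contributing to $\df x$. Because $r$ and $n$ are linear functionals on formal capping surfaces, and the intrinsic multiplicity surface $S_u$ (with multiplicities $|u^{-1}(c)|$) is itself capping-independent, only the surfaces $S_x$ and $S_{y_i}$ change when passing from $\tau$ to $\sigma$, so linearity gives
\[
  |u|_\tau-|u|_\sigma=\sum_i\bigl(G(S_{y_i}^\tau)-G(S_{y_i}^\sigma)\bigr)-\bigl(G(S_x^\tau)-G(S_x^\sigma)\bigr)=\tfrac12\Bigl(\textstyle\sum_i d_{y_i}-d_x\Bigr).
\]
On the other hand, Theorem~\ref{thm:d2} asserts that $\df$ has degree $-1$ with respect to \emph{each} grading; applying this to the same contributing disc $u$ in both algebras and subtracting the two degree identities yields $\sum_i d_{y_i}-d_x=|u|_\tau-|u|_\sigma$. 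Comparing the two relations forces $|u|_\tau=|u|_\sigma$ and $\sum_i d_{y_i}=d_x$ for every contributing disc. The first equality shows the differentials coincide coefficient-by-coefficient, and the second makes the rescaling a chain map: term-by-term, $\Phi(\df_\tau x)=\sum y_1\cdots y_k\,t^{\sum_i d_{y_i}-|u|_\tau}$ agrees with $\df_\sigma(\Phi x)=\sum y_1\cdots y_k\,t^{d_x-|u|_\sigma}$.

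Finally I would record tameness. The map $\Phi$ alters each generator only by multiplication by an invertible coefficient, so it is a composition of elementary automorphisms (one per generator), hence a tame isomorphism; since it also commutes with $\df$ and preserves the grading, it is a tame isomorphism of differential graded algebras, proving the theorem. The main obstacle, as indicated, is the bookkeeping in the key step: one must exploit the linearity of the defect and rotation in the formal capping surface together with the capping-independence of $S_u$, and reconcile the factor of two built into the grading convention of Definition~\ref{def:gengr} with the degree-$(-1)$ property. It is precisely the interplay of these two facts that forces the $t$-powers in the differential to be capping-independent on the discs that actually contribute, so that only the generators' gradings move and a diagonal rescaling suffices.
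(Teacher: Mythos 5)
Your map is exactly the one in the paper's proof, $\phi(x)=t^{\tau(x)-\sigma(x)}x$, and your overall strategy --- a diagonal rescaling by units of $\zz_2[\qq]$, checked to be a graded chain map, with tameness coming from the elementary isomorphisms $x\mapsto t^{\alpha_x}x$ allowed in Section~\ref{sec:invariance} --- is precisely the paper's; the paper simply leaves the verification of $\partial_\sigma\circ\phi=\phi\circ\partial_\tau$ to the reader. It is in that verification that your argument contains a genuine error. Writing $d_x:=|x|_\tau-|x|_\sigma$ and $D:=\sum_i d_{y_i}-d_x$, you derive two relations: $|u|_\tau-|u|_\sigma=\tfrac12 D$ from linearity of $r$ and $n$, using the normalization $|u|=r(S)+\mu n(S)$ of Definition~\ref{defn:diff}; and $|u|_\tau-|u|_\sigma=D$ from subtracting the two degree-$(-1)$ identities of Theorem~\ref{thm:d2}. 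But these come from two normalizations of $|u|$ that differ by a factor of $2$: the proof of the degree statement uses $|u|=2\bigl[r(S_u)+\mu n(S_u)-r(S_0)-\mu n(S_0)+\sum_j(r(S_j)+\mu n(S_j))\bigr]$ as in Equation~\eqref{eq:line1}, and the degree-$(-1)$ identity holds for arbitrary capping choices only under that factor-$2$ convention (under the unscaled convention the capping-surface contributions of the negative corners do not cancel, and the identity fails). Mixing the two conventions is what produces the spurious cancellation $D=0$, $|u|_\tau=|u|_\sigma$.

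That conclusion is not merely unproven but false. Consider a contributing disc with no negative corners, $k=0$ (such discs produce the constant terms $1$ in $\df a_1,\dots,\df a_4$ in Section~\ref{ssec:torsion-knots}): your claim $\sum_i d_{y_i}=d_x$ would force $d_x=0$, i.e.\ that the grading of $x$ cannot change at all, contradicting Remark~\ref{rem:meaning-of-iso}, which gives $d_x=k_x\bigl(2r(S)+2\mu n(S)\bigr)$ with $2r(S)+2\mu n(S)$ generically nonzero --- its non-vanishing is exactly why the $t=1$ grading is only cyclic. So the $t$-exponents of $\df_\tau$ and $\df_\sigma$ genuinely differ, the two differentials do \emph{not} coincide coefficient-by-coefficient, and the rescaling $\Phi$ is needed to compensate. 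The repair is easy and returns you to the paper's intended argument: fix the single normalization under which Theorem~\ref{thm:d2} holds (the factor-$2$ one of \eqref{eq:line1}). Then your linearity computation yields $|u|_\tau-|u|_\sigma=\sum_i d_{y_i}-d_x$ directly, and this identity \emph{is} the chain-map condition $\sum_i d_{y_i}-|u|_\tau=d_x-|u|_\sigma$; the detour through $D=0$ should simply be deleted.
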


\begin{proof}  Define $\phi:\mathcal{A}_{\tau} \rightarrow \mathcal{A}_{\sigma}$ on generators by \[ \phi(x)=t^{\tau(x)-\sigma(x)}x.\]

One may easily verify that $\partial_{\sigma}\circ \phi=\phi\circ \partial_{\tau}$. \end{proof}

\begin{rem} \label{rem:meaning-of-iso}
  We can, in fact, identify the quantity $\tau(x)-\sigma(x)$ in the preceding proof.  For a fixed Reeb chord $x$, the difference between the capping paths associated  to $\sigma$ and $\tau$ is some signed number $k_x$ of copies of  $\Gamma$.  Let $S$ be a capping surface for $\Gamma$, constructed as in Section~\ref{sec:capp}.  Then we have:
  $$\tau(x) - \sigma(x) = k_x \bigl(2r(S) + 2\mu n(S) \bigr).$$
\end{rem}

\begin{proof}[Proof of Lemma~\ref{lem:surfacechoice}] Given a capping path $\gamma$ for $x$, recall that we construct a formal capping surface by resolving the double points of $\gamma$ and then assigning to each of the resulting simple closed curves either a positive or a negative subsurface of $\Sigma$. We will show that the grading of $x$ does not depend on these choices.  

For simplicity, let $\gamma$ denote a single separating component of the resolved capping path. Let $S_P$ denote the region of $\Sigma$ bounded positively by $\gamma$ and let $S_N$ denote the region bounded negatively by $\gamma$. 
Since the union of  $S_N$ and  $S_P$ is $\Sigma$,  Proposition~\ref{prop:defe} implies that the sum of the defects of $N$ and $P$ is the rational Euler number of $M$:
\[ n(S_N)+n(S_P)=e(M).\]
Furthermore, we claim that
\[ r(S_P)-r(S_N)=\chi_{orb}(\Sigma).\]
To see this, note that the orbifold Euler characteristic is additive under identification of two 2-orbifolds along the boundary and that at a convex (resp.\ concave) corner of $S_P$, the correction term to $r(S_P)$ is $\frac{1}{4}$ (resp.\ $-\frac{1}{4}$), while the correction coming from the complementary concave (resp.\ convex) $r(S_N)$ is $-\frac{1}{4}$ (resp.\ $\frac{1}{4}$).

The closed curve coming from $\gamma$ bounds $S_N$ negatively, so we have the following computation:

\begin{align*}
  r(S_N)+\mu n(S_N)&=r(S_P)-\chi_{orb}(\Sigma) +(\frac{\chi_{orb}(\Sigma)}{-e(M)})(-1)\big(e(M)-n(S_P)\big)\\
  &=r(S_P)-\chi_{orb}i(\Sigma)+\chi_{orb}(\Sigma)+\mu n(S_P)\\
  &=r(S_P)+\mu n(S_N).
\end{align*}

A similar argument applies to a non-separating component of the capping path when considering the choice between the positive and negative subsurfaces as at the end of Section~\ref{sec:capp}.
\end{proof}

Finally, we note that from a computational perspective, it is often convenient to work over $\mathbb{Z}_2$ instead of $\mathbb{Z}_2[\mathbb{Q}]$.  As in \cite{chv, ens, s1bundles}, this can be achieved by setting the $\mathbb{Q}$ parameter $t$ equal to $1$; in this case, the grading is only well-defined modulo $2r(S) + 2\mu n(S)$, where $S$ is a formal capping surface whose boundary is $\Gamma$, as in Remark~\ref{rem:meaning-of-iso}.

\section{Examples and Applications}\label{sec:ex}

In this section, we present a collection of examples related to computing the differential algebra and distinguishing Legendrian knot types.

\subsection{A first example}

As a first example, we consider the knot shown in Figure~\ref{fig:basic}.
\begin{figure}[ht]
  \begin{center}
    \relabelbox \footnotesize{
      \centerline{\scalebox{.9}{\epsfbox{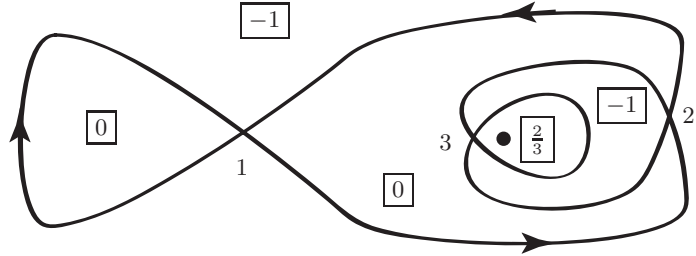}}}}
    \relabel{0}{\fbox{$0$}}
    \relabel{1}{\fbox{$-1$}}
    \relabel{2}{\fbox{$0$}}
    \relabel{3}{\fbox{$-1$}}
    \relabel{4}{\fbox{$\frac{2}{3}$}}
    \relabel{5}{$1$}
    \relabel{6}{$2$}
    \relabel{7}{$3$}
    \endrelabelbox
    \caption{A Legendrian knot $K$ in $L(4,3)$. }
    \label{fig:basic}
  \end{center}
\end{figure}

We begin by computing the gradings of the generators, choosing the capping path suggested by the orientation for each $a_i$ and reversing its orientation to get a capping path for the associated $b_i$.  Note that $\mu=1$.  Then for all $i$, we have
\[
  |a_i| =1 \text{ \quad  and \quad } |b_i| = 0.\]

The generator $a_1$ admits two boundary discs, each of which corresponds to one of the two possible capping paths for $a_1$.  Neither path has corners, so the boundary word associated to each of these discs is $t^{-|u|}$.  In each of the terms above, the power of $t$ is $0$.  To see this, note that for each $u$ which contributes to the boundary, the concatenation of $u(\partial \mathbb{D})$ with the chosen capping path either contracts to a point in $\Gamma$ or is $\Gamma$ itself.  The former case always results in $t^0$; in the latter case, the power of $t$ is $0$ because the contributions to $|u|$ from the left and right sides of the central double point cancel.  Thus, we obtain $\df a_1 = 0$.

\begin{figure}[ht]
\begin{center}
\scalebox{.6}{\includegraphics{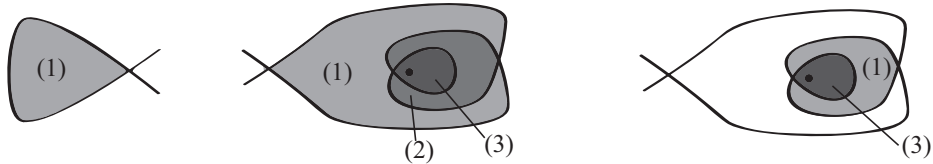}}
\caption{The three boundary discs, shown with multiplicities in each region.}\label{fig:basicexdiscs}
\end{center}
\end{figure}

There is one other map of the disc into the diagram which contributes boundary terms.  The image of this disc has multiplicity one in the region with defect $-1$ and multiplicity three in the region with defect $\frac{2}{3}$. This disc yields the terms
\begin{align*}
 \partial a_2&= b_3 \\
 \partial a_3&= b_2
 \end{align*}
In this case, the argument that the $t$ power is zero for both terms is the same as the one above.

Note that, in general, every admissible defect $0$ map of a marked disc with $k$ marked boundary points that map to convex corners will contribute $k$ terms to the differential, one term for the positive chord at each corner.

\subsection{Torsion Knots}
\label{ssec:torsion-knots}

\begin{figure}
  \relabelbox \footnotesize{
    \centerline{\epsfbox{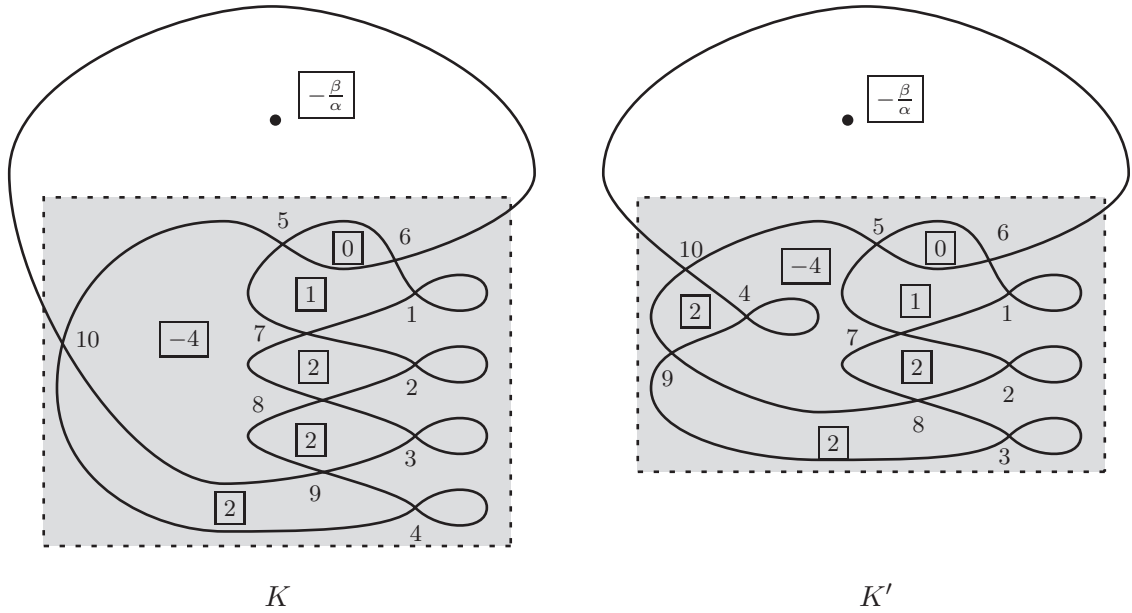}}}
  \relabel{0}{\fbox{$-\frac{\beta}{\alpha}$}}
  \relabel{1}{\fbox{$-4$}}
  \relabel{2}{\fbox{$0$}}
  \relabel{3}{\fbox{$1$}}
  \relabel{4}{\fbox{$2$}}
  \relabel{5}{\fbox{$2$}}
  \relabel{6}{\fbox{$2$}}
  \relabel{7}{\fbox{$-\frac{\beta}{\alpha}$}}
  \relabel{8}{\fbox{$0$}}
  \relabel{9}{\fbox{$1$}}
  \relabel{a}{\fbox{$2$}}
  \relabel{b}{\fbox{$2$}}
  \relabel{c}{\fbox{$2$}}
  \relabel{d}{\fbox{$-4$}}
  \relabel{e}{$1$}
  \relabel{f}{$2$}
  \relabel{g}{$3$}
  \relabel{h}{$4$}
  \relabel{i}{$5$}
  \relabel{j}{$6$}
  \relabel{k}{$7$}
  \relabel{l}{$8$}
  \relabel{m}{$9$}
  \relabel{n}{$10$}
  \relabel{o}{$1$}
  \relabel{p}{$2$}
  \relabel{q}{$3$}
  \relabel{r}{$4$}
  \relabel{s}{$5$}
  \relabel{t}{$6$}
  \relabel{u}{$7$}
  \relabel{v}{$8$}
  \relabel{w}{$9$}
  \relabel{x}{$10$}
  \relabel{y}{\normalsize{$K$}}
  \relabel{z}{\normalsize{$K'$}}
  \endrelabelbox
  \caption{Diagrams for two non-Legendrian-isotopic knots $K$ and $K'$ representing the torsion homology class $[m_i-F]$. The defect of each of the small lobes is $0$.  We assume that the loop at the top either encircles an exceptional fiber or has defect at most $-1$.  Note that most of this knot lives in a vertically thin slice of the SFS, and that with respect to the short chords, all defects pictured are $0$ except for the region containing the exceptional fiber.}
  \label{fig:torsion-knot}
\end{figure}

In this section, we present an example of a pair of topologically isotopic Legendrian knots distinguished by their Legendrian contact homology algebras.  This construction provides the proof of Theorem~\ref{thm:ex}, and the two non-isotopic Legendrian representatives appear in Figure~\ref{fig:torsion-knot}.  

These knots may be constructed as follows:  Fix any Seifert fibered space $M$ with at least one exceptional fiber, $b>1$, and $\mu \not \in \zz$.  Recall that the construction described in Section~\ref{ssec:transverse-structure} builds a Seifert fibered space from an $S^1$-bundle by Dehn surgery.  Perform all but the $i^{th}$ Dehn surgery, and call the resulting contact Seifert fibered space $M_0$.  In $\Sigma_0$, consider an embedded circle that bounds a disc containing the marked point corresponding to the final Dehn surgery, but no other exceptional points.  Increase the radius of this circle until its Legendrian lift $K_0$ is a closed curve homotopic to a regular fiber. The defect of the region bounded by the circle is $-1$; since we have assumed $b>1$, such a region exists!

In a small ball $B$ around a point on $K_0$, we may assume that the pair $(B, K_0 \cap B)$ is contactomorphic to the standard contact $\rr^3$ paired with the $x$ axis.  Let $K_1$ and $K_1'$ be the long Legendrian knots whose Lagrangian projections are pictured inside the dashed boxes in Figure~\ref{fig:torsion-knot}.  Modify $K_0$ by replacing $K_0 \cap B$ with the image of each of $K_1$ and $K_1'$.  

Now perform the final surgery as indicated above.  This results in the knots $K, K'\subset M$. Note that the defect of the region containing the exceptional point is $-\frac{\beta_i}{\alpha_i}$; this follows from Remark~\ref{rem:homdef}. The homology class of each of $K$ and $K'$ is $[m_i-F]$ in $M$.

To show that $K$ and $K'$ are not Legendrian isotopic, we apply Chekanov's technique of linearized homology \cite{chv} to the low-energy DGA.  Further, we use the ground ring $\zz_2$ instead of $\zz_2[\qq]$.   As noted at the end of Section~\ref{ssec:choices}, the gradings are then well-defined modulo $\frac{2}{\alpha} - 2\mu (1+\frac{\beta}{\alpha})$. We begin the computation by finding the gradings of the generators. 

\begin{align*}
  |a_{1\ldots 4}| &= 1 & |b_{1 \ldots 4}| &= 2\mu-2\\
  |a_5| &= -3 & |b_5| &= 2\mu+2 \\
  |a_6| &= 3 & |b_6| &= 2\mu-4 \\
  |a_{7 \ldots 10}| &= 2 \mu -1& |b_{7 \ldots 10}| &= 0
\end{align*}

With the knot $K'$, the gradings are the same as before, except for the following:

\begin{align*}
  |a'_{5}| &= -1 & |b'_{5}| &= 2\mu\\
  |a'_{6}| &= 1& |b'_{6}| &= 2\mu-2
\end{align*}

The next step is to find the augmentations of each DGA, i.e., graded algebra maps $\epsilon: \alg \to \zz_2$ that vanish on the image of $\df$.  To check for augmentations, it suffices to look at the differentials of the generators of degree $1$.  For $K$, we obtain

\begin{align*}
  \df a_1 &= 1+ b_7 + a_6 a_5 b_7  &
  \df a_2 &= 1+ b_7 b_8 \\
  \df a_3 &= 1+ b_8 b_9 &
  \df a_4 &= 1+ b_9 b_{10}
\end{align*}

The differentials for $K'$ are the same up to reordering, and in addition, $\df a'_6 = 0$. In both cases, there is a unique augmentation $\epsilon$ that sends $b_{7 \ldots 10}$ to $1$ and all other generators to $0$.

Finally, we linearize the differentials.   Define $\phi^\epsilon: \alg \to \alg$ by $\phi^\epsilon(x) = x + \epsilon(x)$, then conjugate by $\phi^\epsilon$ and take the linear terms. One can easily check that $a_6$ cannot appear in the linearized differential for degree reasons, so the linearized homology of $\alg_K$ with respect to $\epsilon$ is nontrivial in degree $3$; in contrast, the linearized homology of $\alg_{K'}$ with respect to $\epsilon$ must be trivial in degree $3$, as there are no generators in that degree. Since, as proved by Chekanov \cite{chv}, the set of all linearized homologies taken over all possible augmentations is invariant under stable tame isomorphism, we may conclude that $K$ and $K'$ are not Legendrian isotopic.

\begin{rem}
  In fact, one can compute that the sets of Poincar\'e-Chekanov polynomials for $K$ and $K'$ are $\{t^{-3} + t^3 + t^{2\mu-4} + t^{2\mu+2} \}$ and $\{t^{-1} + t + t^{2\mu} + t^{2\mu-2}\}$, respectively.  Unlike the case of Legendrian knots in $\rr^3$, the linearized homologies of these knots do not have a ``fundamental class'' in degree $1$ as promised by \cite{duality}.  The duality structure of the $\rr^3$ case (again, see \cite{duality}) does appear to persist in some form, though we conjecture that this easy appearance of duality is an artifact of the construction of the knots $K$ and $K'$ and that the general duality structure --- if it exists at all --- is more subtle.
\end{rem}

\section{Proof that $\df$ is a differential}
\label{sec:d2}

\subsection{Proof that $\df$ is graded with degree $-1$. } 

Recall that each disc counted by the differential defines a formal capping surface $S_u$ whose multiplicity is $|u^{-1}(c)|$ at a regular point $c \in \Sigma \setminus (\Gamma \cup \mathbf{X})$. 

Suppose that $u$ is an admissible map representing the term $y_1...y_kt^{-|u|}$ in $\partial x$, and suppose further that each of $x$ and the $y_j$ are $a$-type generators. Since defect and rotation are additive, we may compute $|u|$ as a sum of the contributions from $S_u$ and the capping surfaces associated to the generators. Let $S_j$ denote a capping surface for $y_j$, $j\geq 1$, and let $S_0$ denote a capping surface for $x$.  Then

\[|u|=|S_u|-|S_0|+\sum_{j=1}^k |S_j|.\]

Expanding this, we have

\begin{equation}\label{eq:line1}|u|=2\big[r(S_u)+\mu n(S_u)-r(S_0)-\mu n(S_0)+  \sum_{j=1}^k\big( r(S_j)+\mu n(S_j)\big)\big].\end{equation}

The fact that $u$ defines a term in $\df x$ implies that $S_u$ has defect zero and rotation $1-\frac{k+1}{4}$. Combining this with Equation~(\ref{eq:line1}) yields:
$$-|t^{-u}|=1-|x|+\sum_{j=1}^k |y_{i_j}|.$$
This shows that the differential is a graded map when restricted to the subalgebra generated by the $a$-type chords.

Now consider how the computations above change if some $a_{j}$ is replaced by the generator $b_{j}$ whose capping path is $-\gamma_j$  for $1\leq j \leq k$.  In this case, the fact that the defect of $u$ vanishes implies that $n(S_u) =1$ in (\ref{eq:line1}).  However, since
\[|b_{j}|=2\big[r(S_j)+\mu n(S_j)+\mu-\frac{1}{4}\big],\]
the conclusion again follows.  The argument for the $j=0$ case is similar. 

\subsection{Proof that $\df^2=0$ }

The proof that $\df$ satisfies $\df^2=0$ will be a combinatorial realization of the standard compactness and gluing arguments in Morse-Witten-Floer theory.  The spirit of the combinatorics goes back to Chekanov \cite{chv}; the formulation of the proof mirrors that in \cite{s1bundles}.

We will work with two new types of disc: first, a \dfn{broken disc} is a pair $(u,v)$ with $u \in \Delta(x; y_1, \ldots, y_k)$, $v \in \Delta(y_i; w_1, \ldots, w_l)$ for some $1 \leq i \leq k$, and where $n(u;x,\mathbf{y}) = 0 = n(v;y_i, \mathbf{w})$.  Every term in $\df^2x$ corresponds to a broken disc. Second, an \dfn{obtuse disc} is an admissible disc $u: (\dd, \partial \dd) \to (\Sigma, \Gamma)$ that satisfies nearly all the conditions for inclusion in some $\Delta(x;\mathbf{y})$ except that near one marked boundary point $z_i$, the image of $u$ covers three quadrants of $(\Sigma, \Gamma)$ instead of just one.  We again require that  $n(u;x,\mathbf{y})=0$.  The definition of the set $\Delta(x; \mathbf{y})$ can be adjusted to include obtuse discs if we specify the following: 
\begin{enumerate} 
\item If $z_0$ is the obtuse point, then the two of the three quadrants covered by the image of $u$ are labeled $x^+$ and
\item If $z_i$, $i>0$, is the obtuse point, then the two of the three quadrants covered by the image of $u$ are labeled $y_i^-$.
\end{enumerate}

Theorem~\ref{thm:d2} will follow from two lemmata:

\begin{lem}[Compactness]
  \label{lem:compactness} Every obtuse disc splits into a broken disc in exactly two distinct ways.
\end{lem}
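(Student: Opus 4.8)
The proof is entirely local at the obtuse corner, and the plan is to exhibit the two splittings by cutting the domain along the two strands of $\Gamma$ meeting at the offending double point, and then to certify that each cut produces a genuine broken disc. Let the obtuse disc $u$ have its distinguished obtuse corner at a marked boundary point $z_\star$ whose image is a double point $p$ of $\Gamma$; by hypothesis $u$ covers three of the four quadrants at $p$ and is a convex, $\Delta$-type corner at every other marked point. I would fix the orthogonal complex structure near $p$ from Section~\ref{ssec:rotation} and model $u$ near $z_\star$ on $w \mapsto w^{3/2}$, so that a boundary half-disk of $\dd$ wraps onto the $3\pi/2$ sector sweeping the three covered quadrants. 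In this model the two strands $S_1, S_2$ of $\Gamma$ through $p$ each contribute one boundary arc at $z_\star$ (the two sides of $z_\star$, which bound the uncovered quadrant) together with one arc $\delta_1, \delta_2$ running from $z_\star$ into the interior of $\dd$, namely the preimage of the strand–ray crossing the covered sector. These two interior arcs are the only candidate cuts, and they are in natural bijection with the two short Reeb chords lying over $p$.

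For each $j$ the plan is to cut $\dd$ along $\delta_j$, from $z_\star$ to the point where $\delta_j$ re-meets $\partial\dd$; this separates $\dd$ into two marked discs and replaces $z_\star$ by a new marked boundary point on each piece, both mapping to $p$. From the local model I would check that the $3\pi/2$ corner resolves so that, after inserting the short chord over $p$ as the new corner, the chord is traversed positively on one piece and negatively on the other, exactly as demanded by the definition of a broken disc. Thus each cut yields an ordered pair $(u_1, u_2)$ with $u_1 \in \Delta(\,\cdot\,; \ldots)$ and $u_2 \in \Delta(\,\cdot\,;\ldots)$ matched along the inserted chord, where one piece retains the original positive corner ($x^+$, or the resolved $x^+$ when $z_\star = z_0$) and the other takes the inserted chord as its unique positive corner.

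Admissibility of the pieces is largely inherited: away from marked points each $u_j$ is an orientation-preserving immersion, the odd-quadrant parity conditions of Definition~\ref{defn:disc} hold by construction, and since $p$ carries no orbifold structure no exceptional-point condition is disturbed. For the defects I would use additivity of $\int u^*\kappa$ together with the fact that the inserted chord contributes $+\frac{1}{2\pi}\ell$ to one piece and $-\frac{1}{2\pi}\ell$ to the other, giving $n(u_1) + n(u_2) = n(u) = 0$. Crucially, each piece has exactly one positively traversed corner, so Corollary~\ref{cor:defect-properties} forces $n(u_1) \le 0$ and $n(u_2) \le 0$; being integers summing to $0$, both must vanish. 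Hence $(u_1, u_2)$ is a bona fide broken disc, and this is the step where the grading/defect bookkeeping is pinned down cleanly.

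Finally, for the count ``exactly two,'' I would argue that any realization of $u$ as a broken disc must split it at the unique marked point failing the convexity condition for membership in a $\Delta$-set, namely $z_\star$, and must insert a Reeb chord over $p$; there are precisely two short chords over $p$, hence precisely two admissible cuts, and the local model shows the two resulting broken discs are distinct. I expect the main obstacle to be exactly the local combinatorics underlying the previous paragraph's claim: verifying in detail how the $3\pi/2$ corner resolves under each strand-cut into the correct convex corner together with the correct chord sign on each piece, and dually that every broken-disc decomposition of $u$ arises from one of these two cuts. This is best carried out with the figure accompanying the obtuse-disc definition; once it is established, everything else follows from additivity and Corollary~\ref{cor:defect-properties}.
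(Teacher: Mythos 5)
Your overall skeleton --- cut $\dd$ along the preimage of $\Gamma$ emanating from the obtuse corner, then use additivity of the defect together with Corollary~\ref{cor:defect-properties} to force both defects of the pieces to vanish --- is the paper's argument, and that final defect step you have exactly right. But the geometric heart of your proof is wrong: the junction chord of the broken disc does \emph{not} lie over $p$. When you cut along the arc $\delta_j$ (the preimage of one strand-ray, say the ray separating quadrants $Q_1$ and $Q_2$ of the three covered quadrants $Q_1, Q_2, Q_3$), the corner at $p$ splits as $1+2$: one piece has a single convex corner at $p$ that simply inherits the original label there, while the other piece covers the two adjacent quadrants $Q_2 \cup Q_3$, so its boundary passes \emph{smoothly} through $p$ along the other strand and it has no marked point at $p$ at all. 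Your claim that both pieces acquire marked points at $p$, with ``the short chord over $p$'' traversed positively by one and negatively by the other, is impossible: it would violate condition (4) of Definition~\ref{defn:adm}, since the two-quadrant piece would have a marked point covering an even number of quadrants. The marked points where the two pieces are actually joined --- the chord realizing them as a broken disc --- sit at the \emph{far} endpoint $c(1)$ of the cut, a double point of $\Gamma$ generally different from $p$; it is there that one inserts complementary labels (both $a$-type or both $b$-type) so that each piece has exactly one positive corner, and it is the cancellation of that chord's length, not of a chord over $p$, that gives $n(v_1)+n(v_2)=n(u)$.

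This error propagates into your counting argument: the bijection behind ``exactly two'' is between splittings and the two strands of $\Gamma$ entering the interior of the image at the obtuse corner, not between splittings and ``the two short Reeb chords lying over $p$'' --- the chords inserted by the two splittings typically lie over two different double points, neither of which is $p$. You also skip the termination analysis for the cut path, which the paper handles with care: the path following $\Gamma$ is stopped the first time it hits $\partial \dd$ at an unmarked point, hits itself, or returns to $z_\star$, and one must check that its endpoint maps to a double point of $\Gamma$ and that the cut still separates $\dd$ into two marked discs in each of these cases. Fixing your proof means relocating the junction to $c(1)$ and carrying out this endpoint analysis; once that is done, your defect argument goes through verbatim.
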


\begin{lem}[Gluing]
  \label{lem:gluing} Every broken disc can be glued uniquely to form a obtuse disc.
\end{lem}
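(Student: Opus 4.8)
The plan is to prove Lemma~\ref{lem:gluing} by exhibiting an explicit local surgery that takes a broken disc $(u,v)$ and produces a single obtuse disc, and then to argue uniqueness. I would begin by setting up notation: write $u \in \Delta(x; y_1, \ldots, y_k)$ with $v \in \Delta(y_i; w_1, \ldots, w_l)$ glued along the shared chord $y_i$, which appears as a positive corner $z_0^v$ of $v$ (covering a quadrant labeled $y_i^+$) and as a negative corner $z_i^u$ of $u$ (covering a quadrant labeled $y_i^-$). The geometric content is that these two corners meet along the \emph{same} crossing of $\Gamma$, with $u$ occupying the $y_i^-$ quadrant and $v$ the complementary $y_i^+$ quadrant; together these two quadrants sit on opposite sides of the crossing, and the complement of their union near the crossing is an adjacent pair, i.e.\ exactly the three-quadrant configuration required of an obtuse point.

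The key steps, in order, are as follows. First I would construct the glued domain: take the two marked discs, remove small neighborhoods of $z_i^u$ and $z_0^v$, and identify the boundary arcs so that the two Legendrian strands passing through the common crossing are matched up; the result is topologically a disc $\dd'$ with one fewer boundary marked point than the total (the glued corner $z_i^u = z_0^v$ disappears into the interior of the identified region, leaving a single new boundary corner that covers three quadrants). I would verify that the remaining marked boundary points are $z_0 = x^+$ (inherited from $u$) together with the negative corners $y_1, \ldots, \widehat{y_i}, \ldots, y_k$ from $u$ and $w_1, \ldots, w_l$ from $v$, and that near the new corner the image covers three quadrants with two of them labeled by the appropriate sign --- matching conditions (1)--(2) in the extended definition of $\Delta(x;\mathbf{y})$ for obtuse discs. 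Second, I would check admissibility of the glued map: away from the seam, $u$ and $v$ are orientation-preserving immersions, so I only need to confirm that the immersion extends smoothly across the identified boundary arcs, which follows because both strands are smooth arcs of $\Gamma$ agreeing to first order at the crossing. Third, and this is the crucial point, I would verify the defect condition $n(u';x,\mathbf{y}')=0$: since the defect is defined by Equation~(\ref{eq:defect}) as $\int u^*\kappa + \frac{1}{2\pi}\sum \epsilon_i \ell(x_i)$ and is additive over the pieces, the curvature integrals add, and the two contributions $-\frac{1}{2\pi}\ell(y_i)$ (from the negative corner of $u$) and $+\frac{1}{2\pi}\ell(y_i)$ (from the positive corner of $v$) cancel exactly, leaving $n(u') = n(u) + n(v) = 0$.

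For uniqueness, I would argue that the glued obtuse disc determines and is determined by the pair: an obtuse disc has a distinguished three-quadrant corner, and there is a unique way to insert the missing $y_i$ chord there so as to split the three quadrants into a $y_i^-$ quadrant (belonging to the $u$-part) and a $y_i^+$ quadrant (belonging to the $v$-part), since the labeling conventions of Figure~\ref{fig:local-labels} fix which complementary quadrant carries which sign. This shows the gluing and splitting operations are mutually inverse, so the glued obtuse disc is unique. I expect the main obstacle to be the careful bookkeeping at the seam: verifying that the smooth-immersion and odd-quadrant-covering conditions of Definition~\ref{defn:adm} are preserved under the identification, and in particular confirming that no spurious double point or branch point is created where the two boundaries are joined. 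The defect cancellation itself is essentially forced by additivity and the sign conventions, so the analytic heart of the argument is really this local combinatorial verification that the surgered map lands in the obtuse-disc moduli space; this is the precise combinatorial shadow of the gluing construction in Morse--Floer theory, and matches the corresponding argument in \cite{s1bundles}.
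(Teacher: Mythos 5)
Your proposal has a genuine gap: it misidentifies where the obtuse corner of the glued disc forms, and as a result it misses the actual content of the proof. By the labeling conventions of Figure~\ref{fig:local-labels}, the quadrants $y_i^+$ and $y_i^-$ are \emph{adjacent}, not opposite: they share an edge of $\Gamma$. Consequently, when the two discs are glued, the image near the crossing $y_i$ covers a half-plane and the glued boundary passes smoothly through; there is no corner there at all (obtuse or otherwise). Your claim that the glued corner ``leaves a single new boundary corner that covers three quadrants'' at $y_i$ cannot be right --- two adjacent quadrants do not make three, and the ``complement of their union'' is not covered by either disc. In the paper's proof the gluing is performed not at a point but along the \emph{maximal} arc of $\Gamma$ on which the two boundary images coincide (the paths $c_1, c_2$ with $u_1 \circ c_1 = u_2 \circ c_2$ and $c_j(0)$ at the shared corner), and the obtuse corner forms at the \emph{far endpoint} of that arc, at a generally different crossing of $\Gamma$: there exactly one of the two discs has a marked point contributing one quadrant, while the other disc passes straight through contributing two, giving the required three.

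This relocation is not cosmetic, because the hard step of the proof lives at that far endpoint: one must show that \emph{exactly one} of $c_1(1)$, $c_2(1)$ is a marked point. Maximality rules out ``neither,'' but ruling out ``both'' requires the quantitative short-chord estimates. If one of them were a positive marked point, deleting it would yield an admissible disc with only negative corners, no singular points, and defect zero, contradicting Corollary~\ref{cor:defect-properties}; if both were negative marked points, they would label complementary chords with $\ell(y_l) + \ell(w_m) = 2\pi$, while summing the two defect equations forces $\ell(x) \geq \ell(y_l) + \ell(w_m)$, contradicting the fact that the short chord $x$ has $\ell(x) < 2\pi$. None of this appears in your plan, and without it the construction does not produce an obtuse disc at all, so the lemma is not proved. (To your credit, the defect computation --- additivity of the curvature integrals and the cancellation of $\pm \ell(y_i)$ --- is correct and matches the paper, and uniqueness is indeed soft once the correct gluing is in place; but the seam bookkeeping you identify as the ``analytic heart'' is in fact the easy part.)
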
 
  
\begin{proof}[Proof of Lemma~\ref{lem:compactness}]
Let $u$ be an obtuse disc with its obtuse corner at the image of $z_i \in \partial \dd$.  At the obtuse corner, there are two line segments of $\Gamma$ that start at $u(z_i)$ and go into the interior of the image of $u$. For each line segment, construct a path $c:[0,1] \to \dd$ that begins at $z_i$, whose image under $u$ follows $\Gamma$, and that ends the first time one of the following occurs:
\begin{enumerate}
\item The path intersects $\partial \dd$ at an unmarked point;
\item The path intersects itself; or
\item The path intersects $\partial \dd$ at $z_i$.
\end{enumerate}
The image of $c$ divides $\dd$ into two marked subdiscs $\dd_1$ and $\dd_2$, and $u(c(1))$ must lie at a double point of $\Gamma$. Place additional marked points 
 in $\partial \dd_i$, $i=1,2$, so that $c(1)$ is marked in both discs.  Let $v_i = u|_{\dd_i}$ for $i=1,2$.  The marked points in the domain of $v_i$ inherit labels from $u$, except at the new marked points. Choose labels at the double point $u(c(1))$ so that each $v_i$ has exactly one positive label; note that these labels will both be $a$ labels or they will both be $b$ labels.

To finish the proof of Lemma~\ref{lem:compactness}, it remains to show that the pair $(v_1,v_2)$ is a broken disc.  As all of the admissibility and $\Delta$ conditions are local, it is obvious from the construction above that both $v_1$ and $v_2$ are admissible discs and that the positive corner of $v_2$ lies adjacent to a negative corner of $v_1$. It remains to show that both $v_1$ and $v_2$ have vanishing defect.  The definition of the defect implies that $n(v_1) + n(v_2) = n(u) = 0$, and the proof now follows from Corollary~\ref{cor:defect-properties}.
\end{proof}

\begin{proof}[Proof of Lemma~\ref{lem:gluing}]
Let $(u_1,u_2)$ be a broken disc.  Since the images of the $u_j$ cover adjacent quadrants at the corner labeled $y_i$, those images share an edge of $\Gamma$.  More precisely, there exist immersed paths $c_j: [0,1] \to \partial \dd_j$, $j=1,2$, so that $u_1(c_1(t)) = u_2(c_2(t))$ and $u_1(c_1(0)) = y_i = u_2(c_2(0))$, and the images are maximal among such paths.  Glue the domains of $u_1$ and $u_2$ along the images of the $c_j$, removing the marked points $c_j(0)$.  We will show below that it cannot be the case that both $c_1(1)$ and $c_2(1)$ are marked points.  Define $v$ on the new domain by patching together $u_1$ and $u_2$ and smoothing if necessary.

In order to prove Lemma~\ref{lem:gluing}, we need to show that $v$ is an obtuse disc.  The only nontrivial facts to prove are
\begin{enumerate}
\item the defect of $v$, with respect to the Reeb chords it inherited from the $u_j$, vanishes; and
\item $v$ has the proper behavior at the marked points lying at  the endpoints of the image of $u_j \circ c_j$.
\end{enumerate}

For the first item, note that the defect of $v$ is the sum of the defects of the $u_j$ since the integrals of the Euler curvature are additive and the chords eliminated from the $u_j$ have equal and opposite signed lengths.  The fact that the defects of the $u_j$ vanish now implies that the defect of $v$ vanishes as well.  

For the second item, consider the common corner of the $u_j$.  There, $v$ does not actually have a corner, as the associated marked points have been removed from the domain of $v$.  Thus, we need only show that $c_1(1)$ is a marked point in the domain of $v$ with the property that $v(\mathbb{D})$  covers three quadrants near $v\big(c_1(1)\big)$.  There are three possibilities, which we evaluate in turn:
\begin{enumerate}
\item It cannot be the case that neither $c_1(1)$ nor $c_2(1)$ is a marked point, or else the paths would not be suitably maximal.  
\item Nor can it be the case that both $c_1(1)$ and $c_2(1)$ are marked points.  Suppose, to the contrary, that this were the case.  Since $v$ has a positive marked point at $c_2(0)$, $c_2(1)$ cannot be positive.  

  If $c_1(1)$ were a positive marked point, then we could replace $v$ by an admissible disc $v': \dd' \to \Sigma$ such that $v = v'$ and $\dd'$ is the same as $\dd$ except for a lack of marked points at the point corresponding to $c_j(1)$.  This new admissible disc would have only negative corners and no singular points, and hence a negative defect --- but the defect of $v'$ is zero, a contradiction.

If both $c_1(1)$ and $c_2(1)$ are negative marked points, say at $y_l$ and $w_m$, then notice that $y_l$ and $w_m$ must be labels for complementary chords. Summing the equations for the defects of $u_1$ and $u_2$ yields
$$0 = \ell(x) - \sum_{j \neq i} \ell(y_j) - \sum_j \ell(w_j) + \int_\dd (u_1^*\kappa + u_2^*\kappa).$$
Notice that all terms except $\ell(x)$ are negative.  Further, we know that $\ell(x) < 2\pi$, whereas $\ell(y_l) + \ell(w_m) = 2\pi$. This is a contradiction, so this case, too, is impossible.
\item The remaining case permits exactly one of  $c_1(1)$ or $c_2(1)$ to be a marked point, and in this case we obtain an obtuse corner for $v$.
\end{enumerate}
\end{proof}

\section{Proof of Invariance}
\label{sec:invariance}

In this section we prove that Legendrian isotopic knots have stable tame isomorphic differential graded algebras.  We refer the reader to \cite{chv, ens} for the definition of stable tame isomorphism, with the caveat that we also allow elementary isomorphisms of the form $\phi(x) = t^{\alpha_x} x$
for each generator $x$ of $\alg$ as in \cite{lenny:lsft}.

Throughout this section, we will employ the following notation: let $K^-$ denote a Legendrian knot in $M$, and let $K^+$ denote the knot which results from applying an isotopy to $K^-$.  The Lagrangian projections and algebras of these knots are correspondingly denoted by $\Gamma^{\pm}$ and $(\mathcal{A}^{\pm}, \partial^{\pm})$.  We may assume that the combinatorics of $\Gamma^\pm$ differ only locally.

If the isotopy region lies in the complement of the exceptional fibers, then $\Gamma^-$ and $\Gamma^+$ will differ by a surface isotopy in $\Sigma$ (which does not change the isomorphism class of the algebra), or by a Legendrian Reidemeister move.   If the isotopy passes a segment of $K^-$ across an exceptional fiber with Seifert invariants $(\alpha, \beta)$, then $\Gamma^+$ will differ from $\Gamma^-$ by an $\alpha$-teardrop move, as depicted in Figure~\ref{fig:fcsisotopy} for $\alpha=4$.  Label the new crossings in $\Gamma^+$ from $1$ to $\alpha-1$, so that the index increases with proximity to the branch point.  As a first step in proving invariance, we study the relationship between the gradings on $\mathcal{A}^-$ and $\mathcal{A}^+$.

\subsection{Invariance and the grading}\label{sec:grinv}

If each generator of $\alg^-$ is assigned a formal capping surface, then the isotopy induces a canonical collection of formal capping surfaces for the generators of $\alg^+$.  In the case of isotopies that create new intersections, we will impose a further compatibility condition on the formal capping surfaces chosen.  These conventions are necessary for the proof that the stable tame isomorphism induced by a local isotopy is graded.  

Given a formal capping surface for $x$ on $(\Sigma, \Gamma^-)$, we construct a formal capping surface on $(\Sigma,\Gamma^+)$ which preserves multiplicities in regions away from the isotopy. Near the isotopy region, the new formal capping surface is determined by the condition that crossing a strand of $\Gamma$ effects the same change in multiplicity before and after the isotopy.  (Note that this crossing is oriented.)

\begin{figure}[ht]
\begin{center}
\scalebox{.5}{\includegraphics{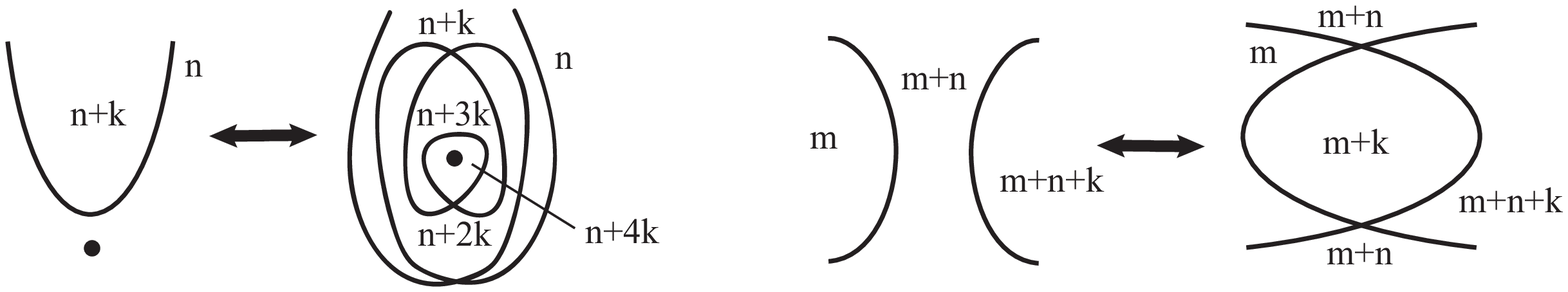}}
\caption{ The integers represent multiplicities of a formal capping surface before and after an isotopy. In the isotopy region, preserve the change in multiplicity associated to crossing each strand of $\Gamma$.}\label{fig:fcsisotopy}
\end{center}
\end{figure}

\begin{prop}\label{prop:isotinv} Let $S^-$ be a capping surface for the Reeb chord $x$, and let  $S^+$ be the image of $S^-$ under an $\alpha$-teardrop move. Denote the grading of $x$ with respect to the surface $S^*$ by $|x|^*$.  Then $|x|^-=|x|^+$.
\end{prop}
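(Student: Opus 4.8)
The plan is to show that the two gradings differ by $2\bigl(r(D)+\mu\,n(D)\bigr)$ for an explicit formal capping surface $D$ supported on the teardrop, and then to prove this quantity vanishes. Since $x$ is literally the same generator in $\Gamma^-$ and $\Gamma^+$, it is of the same ($a$- or $b$-)type, so the constant term $-\tfrac14$ and the extra summand $+\mu$ occurring in the $b$-case are identical in $|x|^-$ and $|x|^+$. Using that $n$ and $r$ are additive over formal capping surfaces, it therefore suffices to prove that the difference $D = S^+ - S^-$ satisfies
\[ r(D) + \mu\, n(D) = 0. \]
By the compatibility convention preceding the proposition, $S^+$ and $S^-$ assign equal multiplicities to every region outside the isotopy region and agree on the change in multiplicity across each old strand of $\Gamma$. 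Hence $D$ is supported in a neighborhood of the exceptional point and is represented by the $\alpha$-teardrop together with its $\alpha-1$ new crossings; concretely, $D$ is the image of an admissible disc branched over the exceptional point (the case $k=1$, modeled on $z\mapsto z^{\alpha}$). I will compute its defect and rotation separately.

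First I would compute $n(D)$. Rather than integrating $u^*\kappa$ directly, I would use the interpretation of the defect as a winding number around the fiber from Proposition~\ref{prop:defect-obstruction} and Remark~\ref{rem:homdef}. Passing to the $\alpha$-fold cover over a neighborhood of the exceptional point, the teardrop unwinds to a disc and the lift of its boundary becomes a single curve whose homology class records $\alpha\cdot n(D)$. The teardrop is the Lagrangian shadow of a genuine Legendrian isotopy, so this lifted curve bounds the trace of the isotopy in $M$; the winding contributed by the $\alpha-1$ new crossings must then be measured against the fractional $-\tfrac{\beta}{\alpha}$ contribution of the enclosed orbifold region (the defect value recorded in Remark~\ref{rem:homdef}). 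Assembling these is what pins down $n(D)$.

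Next I would compute $r(D)$ from Equation~(\ref{eq:rotation}) together with the orbifold Poincar\'e--Hopf statement of Lemma~\ref{lem:orbi-obstruction}. The domain of the teardrop disc carries a single orbifold point of order $\alpha$, contributing its orbifold Euler characteristic, and each of the $\alpha-1$ new crossings contributes a corner correction $\tfrac14(m_i-2)$ governed by the number of quadrants the teardrop covers there. Reading the quadrant counts $m_i$ off the teardrop picture in Figure~\ref{fig:fcsisotopy} and summing yields $r(D)$.

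The hard part is the bookkeeping at the branch point: one must track, crossing by crossing, both the chord-length terms entering $n(D)$ and the quadrant counts $m_i$ entering $r(D)$, all while correctly weighting the orbifold region by its \emph{fractional} defect $-\tfrac{\beta}{\alpha}$ and its fractional Euler-characteristic contribution $\tfrac1\alpha-1$. The crux is to verify that these fractional pieces, together with the integer contributions from the $\alpha-1$ new crossings, enter $n(D)$ and $r(D)$ in exactly the ratio forced by $\mu=-\tfrac{\chi_{orb}(\Sigma)}{e(M)}$, so that $r(D)+\mu\,n(D)=0$. I expect this final cancellation to be the orbifold analogue of the identity $\chi_{orb}(\Sigma)+\mu\,e(M)=0$ that drives the proof of Lemma~\ref{lem:surfacechoice}; establishing it with the branch point present is the main obstacle.
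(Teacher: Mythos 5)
Your reduction to showing $r(D)+\mu\,n(D)=0$ for the local difference $D=S^+-S^-$ is fine, and two of your ingredients are genuinely the right ones; but the proposal both defers the decisive computations ("assembling these is what pins down $n(D)$", "summing yields $r(D)$", "the crux is to verify\dots") and, more importantly, predicts the wrong mechanism for the cancellation. You expect $r(D)$ and $n(D)$ to be individually nonzero fractional quantities that cancel only in the combination $r+\mu n$, in analogy with Lemma~\ref{lem:surfacechoice}. That cannot be what happens: $\mu=-\chi_{orb}(\Sigma)/e(M)$ is a \emph{global} invariant, while $r(D)$ and $n(D)$ are computed from data supported near the exceptional fiber. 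Keeping the local picture fixed but changing the manifold elsewhere (raising the genus, changing $b$, adding exceptional fibers) varies $\mu$ without changing $r(D)$ or $n(D)$, so the identity $r(D)+\mu\,n(D)=0$ can hold across all such manifolds only if $r(D)=0$ and $n(D)=0$ separately. The analogy with Lemma~\ref{lem:surfacechoice} fails precisely because there the two capping surfaces differ by all of $\Sigma$, a global object whose defect is $e(M)$ and whose rotation is $\chi_{orb}(\Sigma)$; the teardrop difference is local and no such identity is available. Hunting for a $\mu$-ratio cancellation, which you flag as "the main obstacle," would therefore stall the proof.

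The paper's argument establishes the two vanishings directly, with no reference to $\mu$. For the rotation, it computes from Equation~(\ref{eq:rotation}) the rotations of the post-isotopy regions $R_1,\dots,R_\alpha$ and sums them against their multiplicities $1,\dots,\alpha$: the term $\frac{1}{\alpha}$ coming from the orbifold point in $R_\alpha$ cancels against the $\pm\frac14$ corner corrections at the $\alpha-1$ new crossings, giving exactly $r(R)$, i.e.\ $r(D)=0$. For the defect, the observation buried in your own second paragraph already suffices: since the teardrop is the Lagrangian shadow of a genuine Legendrian isotopy, the lifted boundary curve bounds the isotopy trace in $M$, so by Proposition~\ref{prop:defect-obstruction} the obstruction vanishes and $n(D)=0$ outright --- there is nothing to "measure against" the $-\frac{\beta}{\alpha}$ contribution. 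If you carry out your two computations honestly you will land on $0+\mu\cdot 0=0$; the gap is that your plan organizes the work around a cancellation that does not exist, and leaves unproved the two separate vanishings that actually constitute the proof.
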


\begin{proof} 
Suppose that the formal capping surface $S\subset (\Sigma, \Gamma^-)$ had multiplicity $0$ at the exceptional point and multiplicity $1$ in the adjacent region $R$.  (See Figure~\ref{fig:fcsisotopy} with $n=0$, $k=1$.) After the isotopy, the local diagram has regions $\{R_i\}_{i=0}^\alpha$, where the multiplicity of the capping surface in $R_{i}$ is equal to $i$.  We can compute the rotation of each of these as in Section~\ref{ssec:rotation}:
\begin{center}
\begin{tabular}{r|c|c|c|c}
  & $R_1$ & $R_{1<i<\alpha-1}$ & $R_{\alpha-1}$ & $R_\alpha$ \\ \hline
  $r$ & $r(R)-\frac{3}{4}$ & $0$ & $\frac{1}{4}$ & $\frac{1}{\alpha} - \frac{1}{4}$
\end{tabular}
\end{center}

Scaling these values by the associated multiplicity, the local contribution to the rotation of the new formal capping surface is
\begin{align}
\sum_k k\,r(R_k)&=\big(r(R)-\frac{3}{4}\big) +  (\alpha-1) \big(\frac{1}{4}\big)+ \alpha \big(\frac{-1}{4}+\frac{1}{\alpha}\big)\notag\\
&=r(R).\notag
\end{align}

Since rotation of regions is additive, this result holds for all possible initial multiplicities.  Furthermore, the fact that the isotopy passes $K$ across a disc implies that the defects of the formal capping surfaces before and after the isotopy are the same.
\end{proof}

To prove that Legendrian Reidemeister moves preserve grading, consider the stable tame isomorphisms defined in \cite{s1bundles}.  It is straightforward to verify that with the formal capping surfaces as described above, these maps are graded.

\subsection{Isotopy invariance}\label{sect:isotinv}
 
 As above, suppose that $K^-$ and $K^+$ differ by a Legendrian isotopy in $M$. In the case when $\Gamma^-$ and $\Gamma^+$  differ by a Legendrian Reidemeister move, the proof of invariance follows largely from \cite{s1bundles}.  We therefore  restrict attention to the teardrop move which results from  passing a segment of $K^-$ across an exceptional fiber with Seifert invariants $(\alpha, \beta)$.  Away from a neighborhood of the exceptional point $e$, the surface isotopy class of $\Gamma^-$ is preserved, and we may similarly assume all labels on $\Gamma^+$ and $\Gamma^-$ in the complement of the isotopy region agree. 

The proof that $(\mathcal{A}^+,\partial^+)$ and $(\mathcal{A}^-, \partial^-)$ are equivalent differential algebras proceeds in five steps.  First, we stabilize $(\mathcal{A}^-, \partial^-)$ and denote the resulting algebra  by $(\mathcal{A}', \partial')$. Second, we study the relationship between $\partial^+$ and $\partial'$, and in the third step we use this data to define a tame isomorphism $s:(\mathcal{A}^+, \partial^+)\to(\mathcal{A}', \partial')$.  In the fourth step, we show  that $\partial'=s\circ \partial^+\circ s^{-1}$ on the subalgebra corresponding to the generators of $\mathcal{A}^-$.  Finally, we define a tame automorphism $g:(\mathcal{A}', \partial')\to (\mathcal{A}', \partial')$  so that $g\circ s\circ \partial^+\circ s^{-1}\circ g^{-1}=\partial'$.

\subsection{Step 1: Constructing $\mathcal{A}'$}\label{sec:step1}

Let $\mathcal{A}'$ denote the $(\alpha-1)$-fold stabilization of $\mathcal{A}^-$, where $\mathcal{E}_i=\{ e_1^i, e_2^i\}$:
\[ \mathcal{A}'=\mathcal{A}^- \coprod \mathcal{E}_1\coprod \mathcal{E}_2 ...\coprod \mathcal{E}_{\alpha-1}. \] 
Extend the length function on $\mathcal{A}^-$ to $\mathcal{A}'$ by assigning the stabilizing generators have lengths which correspond to those coming from the new crossings in $\Gamma^+$:
\begin{align}
\ell(e_{1}^k)&=\ell(a_{k})\notag\\
\ell(e_{2}^k)&=\ell(b_{\alpha-k})\text{ for  }1\leq k \leq \alpha-1.\notag
\end{align}

Next, we will assign gradings to the stabilizing generators of $\mathcal{A}'$. For each $k$, there exists a map $u_k \in \Delta(a_k; b_{\alpha-k})$; see Figure~\ref{fig:brbigon}. 

Construct a  formal capping surface for $a_k$ using the loop which runs along the $u_k$-image of $\partial \dd$.  For $b_{\alpha-k}^-$, choose the loop which is a subset of the $a_k$ loop.  Choose the formal capping surface $S_{b_{\alpha-k}}$ which differs from $S_{a_k}$ only by the image of $u_k$.

Assign gradings to the stabilizing generators of $\mathcal{A}'$ so that for $1\leq k \leq \alpha-1$,
\begin{align}
|e_{1}^k|&=|a_{k}|\notag\\
|e_{2}^k|&=|b_{\alpha-k}|.\notag
\end{align}

\begin{lem}\label{lem:abdefect}
With $u_k$ as above,  $n(u_k; a_k, b_{\alpha-k})=0$.
 \end{lem}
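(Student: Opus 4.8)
The plan is to combine the topological characterization of the defect from Proposition~\ref{prop:defect-obstruction} with the numerical bound of Corollary~\ref{cor:defect-properties}. Recall that by Definition~\ref{defn:defect},
\[ n(u_k; a_k, b_{\alpha-k}) = \int_\dd u_k^*\kappa + \frac{1}{2\pi}\bigl(\ell(a_k) - \ell(b_{\alpha-k})\bigr), \]
and that, by Proposition~\ref{prop:defect-obstruction}, this quantity equals the fiber-winding degree $\deg C_{\mathbf{x}}$ of the lifted boundary curve $C_{\mathbf{x}}$ assembled from the Legendrian lifts of the two boundary arcs of $u_k$ alternating with the chords $a_k$ and $b_{\alpha-k}$. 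Thus it suffices to show that $C_{\mathbf{x}}$ bounds a disc in $M$, or equivalently that $\deg C_{\mathbf{x}} = 0$.

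First I would establish the upper bound. Since $u_k$ has a single positively traversed corner, namely $a_k$, Corollary~\ref{cor:defect-properties} immediately gives $n(u_k; a_k, b_{\alpha-k}) < 1$; as the defect of a map with no singular points is an integer, this forces $n(u_k; a_k, b_{\alpha-k}) \le 0$.

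For the matching lower bound I would pass to the local model of the $\alpha$-teardrop depicted in Figure~\ref{fig:brbigon}. Because $u_k$ has no singular points in its domain, its image avoids the exceptional point, so $u_k^*M$ is an honest trivial circle bundle over the disc and $C_{\mathbf{x}}$ lives in $\dd \times S^1$. I would then verify directly in this model --- using that both boundary arcs of $u_k$ are Legendrian lifts of arcs of the single knot $K^+$, and that the chord lengths $\ell(e_1^k), \ell(e_2^k)$ were assigned in Step~1 precisely to match the crossing data of $\Gamma^+$ --- that the net fiber winding of $C_{\mathbf{x}}$ vanishes, so that $C_{\mathbf{x}}$ is null-homotopic in the fiber direction and hence bounds. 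Combined with the upper bound, this yields $n(u_k; a_k, b_{\alpha-k}) = 0$. Where it is more convenient to argue after resolving the orbifold structure, Remark~\ref{rem:homdef} lets me compute the homology class of the lift in the $\alpha$-fold cover, where it determines $\alpha \cdot n(u_k; a_k, b_{\alpha-k})$, and the symmetry of the teardrop configuration shows this class is zero.

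The main obstacle is the lower bound: the upper bound is essentially formal, but ruling out a strictly negative defect requires an honest understanding of the local teardrop geometry near the exceptional fiber and of how the Legendrian lifts of the two bounding strands close up against the chords $a_k$ and $b_{\alpha-k}$. In particular, one must track the fiber winding contributed by the exceptional fiber's local $\zz/\alpha$ action, which is exactly the feature that distinguishes this computation from the corresponding bigon computation for circle bundles in \cite{s1bundles}.
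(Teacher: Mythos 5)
There is a genuine gap here. Your upper bound is sound: one positive corner, Corollary~\ref{cor:defect-properties}, and integrality give $n(u_k; a_k, b_{\alpha-k}) \le 0$. But the lower bound --- which you yourself flag as the main obstacle --- is never actually proved. ``Verify directly in this model that the net fiber winding of $C_{\mathbf{x}}$ vanishes'' is not an argument; it is a restatement of the lemma, and the appeal to ``the symmetry of the teardrop configuration'' in the cover is equally unsupported. Worse, the one concrete geometric claim you make in setting up the local model is false: the image of $u_k$ does \emph{not} avoid the exceptional point. The teardrop disc covers it as a branch point of order $\alpha$ (locally $z \mapsto z^{\alpha}$, i.e.\ condition (3) of Definition~\ref{defn:adm} with $k_i = 1$); this is exactly what Figure~\ref{fig:brbigon} depicts by lifting $u_1$ to the $\alpha$-fold \emph{branched} cover of the teardrop region. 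The pullback $u_k^*M$ is indeed an honest circle bundle, but because the domain has no orbifold points, not because the image misses the exceptional point --- so any honest computation of fiber winding would still have to contend with the branching, and your proposal gives no way to do so.

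The paper closes the argument with a purely quantitative estimate, using tools you already had in hand. Since the isotoped strand may be assumed to lie in an arbitrarily small ball about the exceptional fiber, the curvature term $\int_\dd u_k^*\kappa$ is negligible, and every chord created by the isotopy has length within a small $\epsilon$ of a multiple of the exceptional fiber's period, namely in $\bigl\{ \tfrac{2\pi}{\alpha}, \tfrac{4\pi}{\alpha}, \ldots, \tfrac{2(\alpha-1)\pi}{\alpha} \bigr\}$. Hence
\[
\bigl| n(u_k; a_k, b_{\alpha-k}) \bigr| \le \Bigl| \int_\dd u_k^*\kappa \Bigr| + \frac{1}{2\pi}\bigl| \ell(a_k) - \ell(b_{\alpha-k}) \bigr| \le \frac{\alpha-2}{\alpha} + O(\epsilon) < 1,
\]
and integrality (which both you and the paper establish the same way, from Proposition~\ref{prop:defect-obstruction}) forces the defect to vanish. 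This single estimate delivers the upper and lower bounds simultaneously, with no analysis of winding near the exceptional fiber at all; the branching enters only through the fact that the chord lengths cluster near multiples of $2\pi/\alpha$, which is what keeps the length difference strictly inside $(-2\pi, 2\pi)$.
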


\begin{proof}

Since $u_k\in \Delta(a_k, b_{\alpha-k})$, it follows that $n(u_k; a_k, b_{\alpha-k})$ is integral. Restricting the isotopy region sufficiently  renders the curvature term in the defect negligible, so the defect depends entirely on the lengths of the chords. 

We may assume that the isotoped strand remains in a sufficiently small ball centered on the exceptional fiber.  Thus, the lengths of any new chords created by the isotopy are arbitrarily close to integral multiples of the length of the exceptional fiber: \[\ell(a_k), \ell(b_{\alpha-l}) \in \left\{ \frac{2\pi}{\alpha}, \frac{4\pi}{\alpha}, ... \frac{2(\alpha-1)\pi}{\alpha} \right\}.\] The contributions of the indicated chords to the defect have opposite signs, so the only possible integral value for $n(u; a_k, b_{\alpha-k})$ is $0$.
\end{proof}

\begin{figure}[ht]
\begin{center}
\scalebox{.6}{\includegraphics{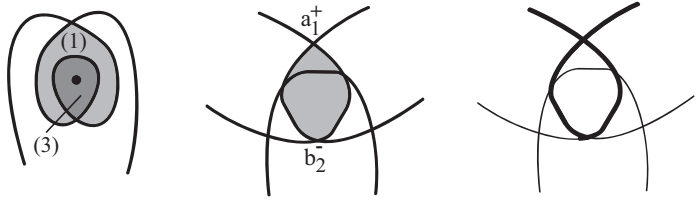}}
\caption{ The left  figure indicates the multiplicities associated to the image of $u_1$, which sends the two marked boundary points to  double points labeled $a_1$ and $b_2$.  The center figure indicates the image of the same map lifted to the $3$-fold branched cover of the teardrop region.  The right figure indicates the lifts of the paths used to construct formal capping surfaces for $a_1$ and $b_2$. }\label{fig:brbigon}
\end{center}
\end{figure}

\begin{lem}\label{lem:abndry}  If $a_k$ is a new generator coming from $\Gamma^+$, then $b_{\alpha-k}$ is a summand of $\partial^+a_k$. 
\end{lem}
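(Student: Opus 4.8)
The plan is to prove the lemma by exhibiting a single explicit admissible disc that realizes $b_{\alpha-k}$ as a term of $\partial^+ a_k$, checking it against the defining conditions of the differential, and then arguing that its contribution is not cancelled. The disc in question is the map $u_k \in \Delta(a_k; b_{\alpha-k})$ already introduced in Section~\ref{sec:step1} and drawn in Figure~\ref{fig:brbigon}: a bigon in the teardrop region with one corner at the crossing $a_k$ and the other at $b_{\alpha-k}$. The essential geometric input is that the teardrop created by passing a strand of $K^-$ across the exceptional fiber is locally a spiral whose consecutive self-intersections pair up precisely as $a_k$--$b_{\alpha-k}$ bigons; this pairing is exactly what the indexing convention (index increasing with proximity to the branch point) is designed to record, and it is most transparent in the branched cover of the teardrop region shown in the center of Figure~\ref{fig:brbigon}.

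First I would verify that $u_k$ satisfies the conditions of Definition~\ref{defn:disc} and Definition~\ref{defn:adm}. Away from its two marked boundary points $u_k$ is an orientation-preserving immersion whose boundary restriction is also an immersion; a neighborhood of $z_0$ is carried onto a single quadrant labeled $a_k^+$ and a neighborhood of $z_1$ onto a single quadrant labeled $b_{\alpha-k}^-$, so each corner covers an odd number (namely one) of quadrants with the correct signs. Crucially, the image of $u_k$ contains no exceptional point of $\Sigma$, so the domain carries no marked interior point and $u_k$ has no singular points in its domain, satisfying condition (1) of Definition~\ref{defn:disc}. The defect $n(u_k; a_k, b_{\alpha-k})$ then vanishes by Lemma~\ref{lem:abdefect}, so $u_k$ contributes the monomial $b_{\alpha-k}\, t^{-|u_k|}$ to $\partial^+ a_k$; since $\partial^+$ has degree $-1$ and the gradings of the stabilizing generators were assigned compatibly in Section~\ref{sec:step1}, the exponent $-|u_k|$ is forced, and this is the asserted summand.

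The step I expect to require the most care is ruling out cancellation: because the coefficient ring is $\zz_2[\qq]$, I must ensure that $u_k$ is not one of an even number of admissible discs contributing the identical monomial $b_{\alpha-k}\, t^{q}$. I would establish this by locality, exactly as in Lemma~\ref{lem:abdefect}: since the isotopy is supported in an arbitrarily small ball about the exceptional fiber, any admissible disc with positive corner at $a_k$, a single negative corner at $b_{\alpha-k}$, and vanishing defect must be confined to the teardrop region, where the spiral geometry leaves no freedom and forces the disc to agree with $u_k$ up to the equivalence of Definition~\ref{defn:disc}. Hence $u_k$ is the unique such disc, its contribution survives, and $b_{\alpha-k}$ is a genuine summand of $\partial^+ a_k$.
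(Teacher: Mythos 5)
Your proof follows the same architecture as the paper's --- the bigon $u_k$ of Figure~\ref{fig:brbigon} provides the term, Lemma~\ref{lem:abdefect} gives the vanishing defect, a grading argument controls the power of $t$, and a locality argument rules out other contributions --- but it contains a genuine error at exactly the point where this setting differs from earlier combinatorial theories. You claim that the image of $u_k$ contains no exceptional point of $\Sigma$, so that the domain has no marked interior points and condition (1) of Definition~\ref{defn:disc} holds vacuously. This is false, and in fact no such unbranched bigon exists: the exceptional point $e$ is separated from the exterior of the teardrop region by the strands of the spiral, so the two arcs of $\Gamma^+$ forming the boundary of the bigon wind nontrivially around $e$, and an orientation-preserving map that is an immersion away from finitely many points and has that boundary must cover $e$ with positive multiplicity. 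Consequently the preimage of $e$ is a marked interior point at which $u_k$ is branched; membership in $\Delta(a_k;b_{\alpha-k})$ holds not because $e$ is avoided, but because one may take the local group trivial ($k_i=1$), so that the domain is a smooth disc and $u_k$ is modeled on $z \mapsto z^{\alpha}$ near the preimage of $e$ (conditions (2)--(3) of Definition~\ref{defn:adm}). This is precisely what Figure~\ref{fig:brbigon} records --- its center panel is the lift of $u_k$ to the $\alpha$-fold \emph{branched} cover of the teardrop region --- and it is the feature Section~\ref{ssec:diff} emphasizes when it says such discs \emph{cannot} be immersed at projections of exceptional fibers. Your uniqueness step inherits the same flaw: ``the spiral geometry leaves no freedom'' is asserted for honest immersed bigons, whereas what must be ruled out are competing \emph{branched} configurations, which is where the branched-cover picture is actually needed.

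A smaller point: the lemma is used later in the form $\partial^+ a_k = b_{\alpha-k} + v_k$, i.e., with coefficient exactly $1 = t^0$, and your proof never establishes $|u_k|=0$; saying the exponent ``is forced'' by degree considerations is not an argument until the degrees are computed. The paper settles this by showing, with the capping surfaces of Section~\ref{sec:step1} (where $S_{b_{\alpha-k}}$ differs from $S_{a_k}$ exactly by the image of $u_k$), that $|a_k|-|b_{\alpha-k}|=1$, so that the degree $-1$ differential forces the $t$-exponent of this term to vanish and the coefficient to lie in $\zz_2$; uniqueness of the disc then makes it $1$ rather than $0$.
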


\begin{proof}
Lemma~\ref{lem:abdefect} implies that $b_{\alpha-k}$ appears in the the boundary of $a_k$ with some coefficient; we need to prove that this coefficient is $1$.  With formal capping surfaces of the type described above,  Definition~\ref{def:gengr} and Equation~\ref{eq:rotation} together imply that $|a_k|-|b_{\alpha-k}|=1$.  Thus, the coefficient of $b_{\alpha-k}$ in the boundary of $a_k$ lies in $\zz_2$.  The disc discussed in Lemma~\ref{lem:abdefect} is the only disc in $\Delta(a_k; b_{\alpha-k})$ whose image lies inside the isotopy region, and the arbitrarily small difference in the lengths of $a_k$ and $b_{\alpha-k}$ implies that there cannot be a disc in $\Delta(a_k; b_{\alpha-k})$ whose image leaves the isotopy region.  Thus, the coefficient is $1$, not $0$.
\end{proof}

As a graded algebra,  $(\mathcal{A}', \partial')$ is isomorphic to  $(\mathcal{A}^+, \partial^+)$.  As a vector space, $\mathcal{A}'$ decomposes as $\mathcal{A}^-\oplus \mathcal{I}_{\mathcal{E}}$, where $\mathcal{I}_{\mathcal{E}}$ is the two-sided ideal generated by elements in the stabilizing pairs $\mathcal{E}_i$. Define $\tau: \mathcal{A}'\rightarrow \mathcal{A}'$ to be projection to $\mathcal{A}^-$, and define $F: \mathcal{A}'\rightarrow \mathcal{A}'$ by
\[
F(x) = \begin{cases}  ye_{1}^iz &  \text{if} \ x=ye_{2}^iz \text{ and }y\in \mathcal{A}^-\\ 
0 & otherwise.\end{cases} 
\]

\begin{lem}
\ $\tau$ satisfies
\begin{equation}\label{lem:tau}
\tau +id_{\mathcal{A}'}=F \circ \partial ' + \partial' \circ F.
\end{equation}
\end{lem}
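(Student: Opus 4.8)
The plan is to verify the identity~(\ref{lem:tau}) directly on monomials, since both sides are $\zz_2[\qq]$-linear and $\mathcal{A}'$ is spanned as a module by words in the generators. First I would record the stabilization data implicit in the construction of $(\mathcal{A}', \partial')$: on the subalgebra $\mathcal{A}^-$ the differential $\partial'$ agrees with $\partial^-$ (and hence preserves $\mathcal{A}^-$), while on the new generators $\partial' e_1^i = e_2^i$ and $\partial' e_2^i = 0$, consistent with $|e_1^i| = |e_2^i| + 1$ as recorded above and with $\partial'$ having degree $-1$. I would also note that $F$ is well defined on monomials: a word $w$ admits at most one factorization $w = y\, e_2^i\, z$ with $y \in \mathcal{A}^-$, namely the one in which $y$ is the (stabilizer-free) prefix up to the leftmost stabilizing letter, and this requires that leftmost letter to be an $e_2^i$; otherwise $F(w) = 0$.

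The computation then splits according to whether $w$ contains a stabilizing letter. If $w \in \mathcal{A}^-$, then $\tau(w) = w$, so the left-hand side is $w + w = 0$ in characteristic two; on the right, $F(w) = 0$ since $w$ has no $e_2^i$, and $F(\partial' w) = F(\partial^- w) = 0$ since $\partial^- w \in \mathcal{A}^-$, so both sides vanish. If $w \notin \mathcal{A}^-$ then $\tau(w) = 0$ and the left-hand side is $w$, so the content of the lemma is to recover $w$ from $F \circ \partial' + \partial' \circ F$.

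For this nontrivial case I would write $w = y\, g\, z$ where $y \in \mathcal{A}^-$ is the prefix and $g$ is the leftmost stabilizing letter, and expand both $\partial' F(w)$ and $F(\partial' w)$ using the Leibniz rule. When $g = e_2^i$, we have $F(w) = y\, e_1^i\, z$, and $\partial'(y\, e_1^i\, z) = (\partial^- y)\, e_1^i\, z + y\, e_2^i\, z + y\, e_1^i\, (\partial' z)$ produces the wanted term $y\, e_2^i\, z = w$ together with two error terms; applying $F$ to $\partial' w = (\partial^- y)\, e_2^i\, z + y\, e_2^i\, (\partial' z)$ (the middle term drops since $\partial' e_2^i = 0$) reproduces exactly those two error terms, which cancel in characteristic two and leave $w$. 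When $g = e_1^i$, instead $F(w) = 0$, so the first summand vanishes, while $F(\partial' w)$ picks out only the term $y\, e_2^i\, z$ arising from $\partial' e_1^i = e_2^i$ (the remaining terms begin with the letter $e_1^i$ and are killed by $F$), and $F(y\, e_2^i\, z) = y\, e_1^i\, z = w$. In every case the right-hand side equals $w$.

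The step requiring the most care is the bookkeeping of how $F$ acts on the Leibniz expansion: one must check that $F$ always acts on the same distinguished letter $e_2^i$ --- the leftmost stabilizer --- both before and after applying $\partial'$, so that the error terms $(\partial^- y)\, e_1^i\, z$ and $y\, e_1^i\, (\partial' z)$ produced by $\partial' F(w)$ are matched term-for-term by $F(\partial' w)$. This rests on the observations that $\partial^- y \in \mathcal{A}^-$ keeps the leftmost stabilizer in place and that any stabilizing letters created inside $\partial' z$ sit strictly to the right of $e_2^i$, so they never interfere with the factorization used to define $F$.
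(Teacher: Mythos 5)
Your verification is correct, and it is precisely the ``straightforward computation'' that the paper leaves to the reader: a case analysis on monomials by the leftmost stabilizing letter, using $\partial' e_1^i = e_2^i$, $\partial' e_2^i = 0$, the Leibniz rule, and the fact that $\partial^-$ preserves $\mathcal{A}^-$ so the distinguished factorization defining $F$ is stable under differentiation of the prefix and suffix. Nothing further is needed.
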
 
The proof is a straightforward computation.

\subsection{Step 2: The relationship between $\partial^-$ and $\partial^+$}\label{sec:step2}

In order to adjust the algebra isomophism between $\mathcal{A}'$ and $\mathcal{A}^+$ to an isomorphism of differential algebras, we compare the boundary maps $\partial^+$ and $\partial'$.   Terms in $\partial'$ come from two sources: the internal differentials on the stabilizing generators $\mathcal{E}_i$, and the differential $\partial^-$ which counts discs in $\Gamma^-$.  Lemma~\ref{lem:abndry} showed that the first type of term has a natural analogue in $\partial^+$, and in this step we study discs of the second type.

\begin{defn}\label{def:partition}
 For each $k\geq2$, let $\mathcal{I}(k)$ denote the set of ordered partitions $I=\{i_1, i_2,
...i_{|I|}\}$ of $k$ which satisfy 
\[0\leq|\ell(b_{\alpha-k})-\sum_{i_j\in I_i}
  \ell(a_{i_j})|\leq 2|I|\epsilon'.\]
  for $\epsilon'$ less than the absolute value of the total curvature of the isotopy region.
\end{defn}

For a generator $x$ of $\alg$, define a \dfn{special $x$-set} as
\begin{enumerate}
\item a term $y_1 b_{\alpha-k} y_2$ in $\partial^+x$; together with 
\item a collection of words $\{ w_{i_1}, ...w_{i_j}\}$ indexed by $I\in \mathcal{I}(k)$ with the property that each $w_{i_j}$ is a term in $\partial^+ a_{i_j}$; and such that
\item each $w_{i_j}$ and $y_l$ is written entirely in generators which come from crossings in $\Gamma^-$.
\end{enumerate}

\begin{lem}\label{lem:xset} There is a bijection between special $x$-sets and boundary discs in $\partial^- x$ with the properties that $u(\partial \dd)$ covers the isotoping strand of $\Gamma$ at least twice and $|u^{-1}(e)|=1$. 
\end{lem}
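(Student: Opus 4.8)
The plan is to realize the bijection by explicit cutting and gluing along the teardrop region, using the $\alpha$-fold branched cover pictured in Figure~\ref{fig:brbigon} as the local model near the exceptional point $e$. The key preliminary observation is that any disc $u$ in the statement satisfies $|u^{-1}(e)|=1$ and has no singular points in its domain, so condition (3) of Definition~\ref{defn:adm} forces $u$ to be modeled on $z\mapsto z^{\alpha}$ at its unique preimage of $e$. Passing to the branched cover therefore unwinds both the wrapping of $u$ about $e$ and the teardrop of $\Gamma^+$: the new crossings $1,\ldots,\alpha-1$ lift to $\alpha$ evenly spaced copies, and the whole local configuration becomes immersed, so the combinatorics of the cuts can be read off transparently upstairs.

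First I would define the forward map. Given a disc $u$ contributing to $\partial^-x$ whose boundary covers the isotoping strand at least twice, cut $\dd$ along the components of the preimage of the isotoping strand that bound the part of the domain lying over the teardrop region of $\Gamma^+$. The component carrying $z_0$ reassembles into an admissible disc in $\Gamma^+$ with a single new negative corner over some $b_{\alpha-k}$, contributing a term $y_1 b_{\alpha-k} y_2$ to $\partial^+x$; each remaining component reassembles into an admissible disc with a single new positive corner over some $a_{i_j}$, contributing a word $w_{i_j}$ to $\partial^+ a_{i_j}$. By construction every $w_{i_j}$ and every $y_l$ is written in generators of $\Gamma^-$, and the covering-twice hypothesis guarantees at least one auxiliary component. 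The indices are determined by how the branching of degree $\alpha$ at $e$ is split among the pieces: the auxiliary discs absorb total branching $k=\sum_j i_j$, leaving the main corner with index $\alpha-k$. The backward map inverts this by gluing a special $x$-set along the teardrop edges exactly as in Lemma~\ref{lem:gluing}: the positive $a_{i_j}$ corners cancel against those edges, the branch point reappears as the single preimage of $e$, and smoothing yields an admissible disc contributing to $\partial^-x$ with the two required properties. Cutting and gluing along the teardrop are inverse operations and the recorded indices are exactly the data prescribing the gluing, so the two maps are mutually inverse.

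The one genuine verification is that the indices recorded by the forward map form an element of $\mathcal{I}(k)$, equivalently that the glued disc has vanishing defect. Since the Euler curvature integrates additively and the cancelled teardrop chords are traversed with opposite orientations in adjacent pieces, the defect of $u$ equals the sum of the defects of its pieces corrected by the signed lengths of the cancelled chords; hence $n(u)=0$ amounts to the balance $\ell(b_{\alpha-k})=\sum_j \ell(a_{i_j})$ up to the curvature error bounded by $2|I|\epsilon'$. By Lemma~\ref{lem:abdefect} each new chord length is within $\epsilon'$ of an integer multiple of $\frac{2\pi}{\alpha}$, so this balance forces $\sum_j i_j=k$; the collection is then an ordered partition of $k$ satisfying the defining inequality of $\mathcal{I}(k)$, and conversely every such partition balances lengths and glues to a defect-zero disc.

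I expect the main obstacle to be the local analysis at the branch point: establishing precisely that the degree-$\alpha$ branching distributes to produce index $\alpha-k$ on the main corner together with a genuine partition of $k$ on the auxiliary corners, and confirming both that no spurious decompositions arise (injectivity) and that every partition in $\mathcal{I}(k)$ is realized (surjectivity). This is exactly where the branched-cover picture of Figure~\ref{fig:brbigon} does the real work, since upstairs the wrapping disc and the teardrop both unwind and the cut arcs become disjoint embedded segments; once that model is in place, the defect and length bookkeeping reduces to Corollary~\ref{cor:defect-properties} and Lemma~\ref{lem:abdefect}.
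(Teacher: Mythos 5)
Your proposal follows the same cut-and-glue strategy as the paper's proof, and much of your bookkeeping matches: the observation that having no singular points in the domain together with $|u^{-1}(e)|=1$ forces the local model $z\mapsto z^{\alpha}$, the verification of Definition~\ref{def:partition} via defect additivity and the near-quantization of the new chord lengths (the paper gets this from Lemma~\ref{lem:stokes} plus the smallness of the curvature of the isotopy region), and the branched-cover picture (the paper's Figure~\ref{fig:gluing} is likewise drawn in the $\alpha$-fold cyclic cover). The substantive difference is the mechanism that makes the cut canonical, which is exactly the point you flag as the main obstacle but do not resolve. The paper does not cut the two-dimensional domain along preimages of the isotoping strand; it first applies Proposition~\ref{prop:defect-obstruction}: because the disc contributing to $\partial^- x$ has defect zero, it lifts to a disc in the three-manifold $M$, and removing the three-dimensional isotopy neighborhood cuts that lift into exactly $n$ pieces, one for each pass of the boundary over the isotoping strand. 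Each piece is then completed to an admissible disc in $(\Sigma,\Gamma^+)$ by attaching a unique triangle along an arc of the boundary of the isotopy region, and the identity $\sum_j i_j=k$ comes from counting contiguous arcs: an $a_{i_j}$-triangle covers $i_j$ of the $\alpha$ contiguous arcs, the $b_{\alpha-k}$-triangle covers $\alpha-k$ of them, and together they cover all $\alpha$. By contrast, cutting along the isotoping strand itself is the weak point of your construction: that curve moves during the isotopy, so pieces whose boundary lies along it do not automatically make sense in $\Gamma^+$, and the assertion that each piece ``reassembles'' with a new corner of the correct index is precisely what needs proof. The paper's cut is along a curve fixed by the isotopy, which is what lets the outside pieces be read in either diagram, and the lift to $M$ --- not the branched cover --- is what makes the decomposition and the triangle attachment canonical.

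Two smaller corrections. The backward map is not literally Lemma~\ref{lem:gluing}, which glues two discs at a shared crossing of $\Gamma$; here one instead truncates the discs of a special $x$-set along the boundary of the isotopy region and refills with the pre-isotopy picture, the point being that there is a unique multiplicity assignment matching the truncated edges, with Definition~\ref{def:partition} guaranteeing that the refilled disc has defect zero. Also, surjectivity does not require that every partition in $\mathcal{I}(k)$ be realized, only that every special $x$-set arise from cutting a disc; partitions for which no actual terms of $\partial^+x$ and $\partial^+a_{i_j}$ exist index nothing and play no role in the bijection.
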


Before proving the lemma, we briefly consider the cases in which $u(\partial \dd)$ covers the isotoping strand once or $|u^{-1}(e)|=0$.  As above, let $S_u$ denote the formal capping surface associated to the image of $u$.  The post-isotopy image of $S_u$, as shown in Figure~\ref{fig:fcsisotopy}, is a formal capping surface which describes a boundary disc in $\partial^+ x$.    Lemma~\ref{lem:xset} describes how the isotopy affects maps $u$ when this simple analysis is insufficient. To understand the case when $|u^{-1}(e)|>1$, restrict $u$ to a subdisc of $\dd$ and apply Lemma~\ref{lem:xset}.

\begin{proof} 
For ease of visualization, we temporarily lift $\Gamma^-$ and $\Gamma^+$ to their $\alpha$-fold cyclic covers, and Figure~\ref{fig:gluing} suggests the idea behind the proof: a single disc representing a term in $\partial^- x$ corresponds to a collection of discs in $(\Sigma, \Gamma^+)$ which represent boundary terms in $\partial^+x$ and $\partial^+ a_{i_j}$. 

  \begin{figure}[ht] 
  \begin{center}
     \scalebox{.48}{\includegraphics{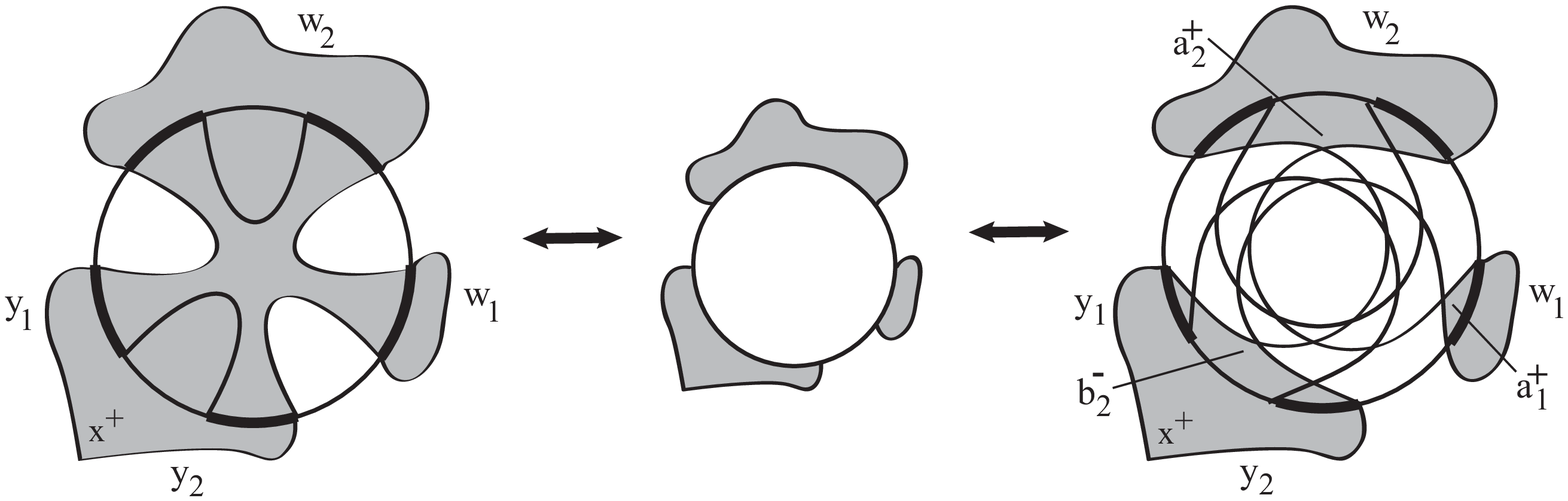}}
     \caption{\ }\label{fig:gluing}
           \end{center} 
 \end{figure}

First, consider the image of a map representing a term in $\df^-x$ and suppose that $u(\partial \dd)$ covers the isotoping strand $n>1$ times.  By Proposition~\ref{prop:defect-obstruction}, $u(\dd)$ lifts to a disc in the manifold $M$.  Removing the isotopy neighborhood cuts this lift into a collection of $n$ discs, each with a segment of its boundary lying on the boundary of the isotopy region.  The Lagrangian projection of this segment consists of a collection of arcs whose images were contiguous to $e$ in $\Gamma^-$ alternating with arcs whose images were separated from $e$ by $\Gamma^-$.  (The contiguous arc are shown in bold in Figure~\ref{fig:gluing}.)

Now replace the removed ball with its post-isotopy image.  The projection of each of the $n$ discs can be extended to the image of an admissible disc without singular points in $(\Sigma, \Gamma^+)$ by attaching a unique triangle along an arc in the boundary of the isotopy region; the new corner in the image of each disc is labeled either by $b_{\alpha-k}^-$ if the disc also contains $x^+$, or by some $a_{i_j}^+$. The labeling convention implies that the boundary of an $a_{i_j}$ triangle covers $i_j$ contiguous arcs, and the boundary of the $b_{\alpha-k}$ triangle covers $\alpha-k$ contiguous arcs.  Since each of the $\alpha$ contiguous arcs is covered by some triangle, it follows that the sum of the ${i_j}$ is $k$.  Furthermore, the total curvature associated to the isotopy region may be bounded close to zero, so Lemma~\ref{lem:stokes} ensures that Definition~\ref{def:partition} is satisfied. Thus, the new collection of discs represent terms forming a special $x$-set.

Given a special $x$-set, consider the images of the corresponding discs in the labeled diagram.  Remove a neighborhood of the image of the isotopy region, truncating each disc.  Replacing this region by its pre-isotopy image, there is a unique way to assign multiplicities to regions of $\Sigma\setminus \Gamma^-$ which matches the edges of the truncated discs. This defines an admissible map without singular points into $(\Sigma, \Gamma)$; since the isotopy region may be assumed arbitrarily small, Definition~\ref{def:partition} implies that the defect of this map is $0$ and therefore represents a term in $\partial^-x$.
\end{proof}

\subsection{Step 3: Constructing $s:\mathcal{A}^+\rightarrow \mathcal{A}'$}\label{sec:step3}

The map $s:\mathcal{A}^+\rightarrow \mathcal{A}'$ is defined inductively, based on the lengths of the generators.  Let $\mathcal{A}_{[0]}$ denote the subalgebra generated by the generators of $\mathcal{A}^+$ with length less than or equal to $\frac{2\pi}{\alpha}+\epsilon'$.  Ordering the remaining generators by increasing length,  let $\mathcal{A}_{[j]}$ denote the subalgebra generated by the elements in $\mathcal{A}_{[0]}$, together with the next $j$ generators.

\begin{lem}\label{lem:length} If $x_k$ is in $\mathcal{A}_{[j]}\backslash \mathcal{A}_{[j-1]}$ for $j\geq1$, then $\partial^+(x_k) \subset \mathcal{A}_{[j-1]}$.
\end{lem}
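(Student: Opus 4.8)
The plan is to deduce the statement directly from the length filtration encoded in the increasing-length ordering of the generators, using the strict length inequality of Lemma~\ref{lem:stokes}. Since $x_k$ is a single generator lying in $\mathcal{A}_{[j]} \setminus \mathcal{A}_{[j-1]}$ with $j \geq 1$, it cannot belong to $\mathcal{A}_{[0]}$, so it is one of the generators ordered by increasing length; in fact it must be exactly the $j$-th such generator. Consequently, every generator that is either short (in $\mathcal{A}_{[0]}$) or strictly shorter than $x_k$ already lies in $\mathcal{A}_{[j-1]}$, and this is precisely the containment I will need to exhibit for each factor appearing in $\partial^+ x_k$.

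First I would fix an arbitrary summand $y_1 \cdots y_m\, t^{-|u|}$ of $\partial^+ x_k$, coming from some admissible map $u \in \Delta(x_k; y_1, \ldots, y_m)$ with $n(u; x_k, \mathbf{y}) = 0$. Applying Lemma~\ref{lem:stokes} to $u$ gives $\ell(x_k) - \sum_i \ell(y_i) > 0$, hence $\sum_i \ell(y_i) < \ell(x_k)$. Because each Reeb chord has strictly positive length, $\ell(y_i) = \int_{y_i} \alpha > 0$, this sum bound forces $\ell(y_i) < \ell(x_k)$ for every individual factor $y_i$, not merely in aggregate.

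Next I would sort the factors according to the filtration. If $\ell(y_i) \leq \frac{2\pi}{\alpha} + \epsilon'$, then $y_i$ is a generator of $\mathcal{A}_{[0]} \subseteq \mathcal{A}_{[j-1]}$. Otherwise $y_i$ is one of the length-ordered generators, and since $\ell(y_i) < \ell(x_k)$ it strictly precedes $x_k$ in that ordering; as $x_k$ is the $j$-th such generator, $y_i$ is among the first $j-1$ and therefore lies in $\mathcal{A}_{[j-1]}$. The \emph{strict} inequality is exactly what makes this insensitive to any ties in the length spectrum, since a factor strictly shorter than $x_k$ must appear earlier in the ordering no matter how equal lengths are broken. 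Finally, a constant summand of $\partial^+ x_k$ (as in $\df a_1 = 1 + \cdots$) lies in the ground ring $\zz_2[\qq]$, which is contained in every unital subalgebra, so it too lies in $\mathcal{A}_{[j-1]}$. Thus every summand of $\partial^+ x_k$ is a word in generators of $\mathcal{A}_{[j-1]}$, giving $\partial^+ x_k \in \mathcal{A}_{[j-1]}$.

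I do not expect a genuine obstacle here: the entire content of the lemma is the strict length drop furnished by Lemma~\ref{lem:stokes}, and the only point requiring care is the bookkeeping with the filtration — namely verifying that the bottom piece $\mathcal{A}_{[0]}$ absorbs all sufficiently short factors and that the strict length comparison correctly places every remaining factor below $x_k$ in the ordering regardless of coincidences among lengths.
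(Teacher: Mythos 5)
Your proof is correct and follows exactly the argument the paper intends: the paper states Lemma~\ref{lem:length} without proof precisely because it is immediate from the strict length drop of Lemma~\ref{lem:stokes} together with the definition of the filtration $\mathcal{A}_{[0]} \subset \mathcal{A}_{[1]} \subset \cdots$ by increasing chord length. Your handling of the two edge cases (ties in the length spectrum, and constant summands absorbed by the unital subalgebra) is careful bookkeeping that the paper leaves implicit.
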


For each $a_{k}$, write $\partial^+a_{k}=b_{\alpha-k}+v_{k}$.  
Note that if $a_k\in \mathcal{A}_{[0]}$, then $b_{\alpha-k}$ is the only word in $\partial^+a_{k}$ which does not involve some generator coming from $\Gamma^-$.   

Define  $s_0:\mathcal{A}_{[0]} \rightarrow \mathcal{A}'$ by
\[
s_0(x) = \begin{cases}  
 x& \text{if }x \text{  is from  }\Gamma^-\\
e_{1}^k &  \text{if} \ x=a_{k} \\ 
e_{2}^k + v_{k} & \text{if}\ x=b_{\alpha-k}. \end{cases} 
\]

For the inductive step, suppose that $s_j:\mathcal{A}_{[j]}\rightarrow\mathcal{A}'$ is defined for $j\in \{ 0, 1, ...  M-1\}$.  Define:
\begin{equation}
s_M(x) = \begin{cases}  
 x& \text{if }x \text{ from } \Gamma^-\\
e_{1}^k +s_{M-1}\big(\sum  a_{i_1}\partial^+ a_{i_2}... \partial^+a_{i_{|I_i|}}\big)& \text{if} \ x=a_{k} \\ 
e_{2}^k + s_{M-1}(v_{k})+ &\\
\ \ \ s_{M-1}\big(\sum \partial^+a_{i_1}\partial^+ a_{i_2}... \partial^+a_{|I_i|}\big)& \text{if}\ x=b_{\alpha-k}. \end{cases} 
\end{equation}
Here, the sums are taken over $I_i\in \mathcal{I}(k)$ with $|I_i|>1$. 

Since $\alg$ is finitely generated, this process terminates after finitely many steps.  Call the resulting map $s$.

The length bounds coming from the definition of $\mathcal{I}(k)$ imply that the $s$-images of the sums are well-defined. Lemma~\ref{lem:abndry} implies that the coefficient of $b_{\alpha-k}$ is $1$, but the summands $v_k$  may have nontrivial coefficients.  In order for $s$ to preserve gradings, we multiply each term of  of the form $s(a_{i_1}\partial^+a_{i_2}...\partial^+a_{i_{|I_i|}})$ by an additional power of $t$.  Multiplying the  corresponding term in $s(b_{\alpha-k})$ by the same power of $t$ ensures $s$ preserves gradings.  However, we  suppress this coefficient for notational simplicity.

\subsection{Step 4: Relating $\partial'$ and $\partial^+$}\label{sec:tau}

We use the map defined in the previous step to define a new differential on $\mathcal{A}'$.  Let $\hat{\partial}:\mathcal{A}'\rightarrow \mathcal{A}'$ be defined by $\hat{\partial}=s\circ \partial^+\circ s^{-1}$.

Recall that $\tau$ is the projection of $\mathcal{A}'\cong \mathcal{A}^- \oplus \mathcal{I}_{\mathcal{E}}$ onto the first summand. 

\begin{lem}\label{lem:taucompeq}
On $\coprod_{i=1}^{\alpha-1}\mathcal{E}_i,  \partial'= \hat{\partial}.$
\end{lem}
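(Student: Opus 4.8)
The plan is to reduce the claimed identity on the stabilizing generators to an explicit computation of $s^{-1}$, and then let $(\partial^+)^2=0$ do the work. First I would pin down the target. Since $\mathcal{A}'$ is a stabilization of $\mathcal{A}^-$, the acyclic pairs satisfy $\partial' e_1^k = e_2^k$ and $\partial' e_2^k = 0$; this is consistent with the gradings $|e_1^k|=|a_k|$, $|e_2^k|=|b_{\alpha-k}|$ and the relation $|a_k|-|b_{\alpha-k}|=1$, and it is exactly the content of the homotopy $\tau+\mathrm{id}_{\mathcal{A}'}=F\circ\partial'+\partial'\circ F$ recorded in Lemma~\ref{lem:tau} (evaluating that identity on $e_1^k$ and $e_2^k$ recovers $\partial' e_1^k=e_2^k$, $\partial' e_2^k=0$). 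Because $\hat\partial=s\circ\partial^+\circ s^{-1}$ and $s$ is an isomorphism, the assertion $\hat\partial=\partial'$ on $\coprod_i\mathcal{E}_i$ is equivalent to the two identities
\[ \partial^+\bigl(s^{-1}(e_1^k)\bigr)=s^{-1}(e_2^k) \qquad\text{and}\qquad \partial^+\bigl(s^{-1}(e_2^k)\bigr)=0 \]
for each $k$. Thus the whole lemma reduces to computing $s^{-1}$ on the stabilizing generators.

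Next I would invert $s$ using its triangular structure with respect to the length filtration. Writing
\[ P=\sum_{\substack{I\in\mathcal{I}(k)\\ |I|>1}} a_{i_1}\partial^+a_{i_2}\cdots\partial^+a_{i_{|I|}}, \qquad Q=\sum_{\substack{I\in\mathcal{I}(k)\\ |I|>1}} \partial^+a_{i_1}\partial^+a_{i_2}\cdots\partial^+a_{i_{|I|}}, \]
the inductive formula of Section~\ref{sec:step3} reads $s(a_k)=e_1^k+s(P)$ and $s(b_{\alpha-k})=e_2^k+s(v_k)+s(Q)$, where $\partial^+a_k=b_{\alpha-k}+v_k$ and where I have used that $s_{M-1}$ agrees with $s$ on the strictly lower filtration level in which $P$, $Q$, and $v_k$ live. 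Working over $\zz_2$ (equivalently, tracking the suppressed grading-correcting powers of $t$, which Step~3 arranges to match in corresponding terms of $s(a_k)$ and $s(b_{\alpha-k})$), I read these off as
\[ s^{-1}(e_1^k)=a_k+P, \qquad s^{-1}(e_2^k)=\partial^+a_k+Q, \]
the latter after substituting $b_{\alpha-k}+v_k=\partial^+a_k$. The length bounds in Definition~\ref{def:partition} guarantee that $P$ and $Q$ are finite sums lying in strictly lower filtration, so this inversion is legitimate.

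Finally I would verify the two identities by differentiating. Applying $\partial^+$ to $s^{-1}(e_1^k)=a_k+P$ and invoking the Leibniz rule together with $(\partial^+)^2=0$ (Theorem~\ref{thm:d2}), each factor $\partial^+a_{i_j}$ with $j\ge 2$ is $\partial^+$-closed, so $\partial^+\bigl(a_{i_1}\partial^+a_{i_2}\cdots\partial^+a_{i_{|I|}}\bigr)=(\partial^+a_{i_1})\partial^+a_{i_2}\cdots\partial^+a_{i_{|I|}}$; summing over $I$ gives $\partial^+(a_k+P)=\partial^+a_k+Q=s^{-1}(e_2^k)$. The second identity is then immediate: $s^{-1}(e_2^k)=\partial^+a_k+Q$ is a sum of products each of whose factors is $\partial^+$-closed, so $\partial^+\bigl(s^{-1}(e_2^k)\bigr)=0$. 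Applying $s$ recovers $\hat\partial(e_1^k)=e_2^k$ and $\hat\partial(e_2^k)=0$, i.e.\ $\hat\partial=\partial'$ on $\coprod_{i=1}^{\alpha-1}\mathcal{E}_i$.

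I expect the only genuine obstacle to be bookkeeping rather than conceptual. One must check that the combinatorial sums over $\mathcal{I}(k)$ match \emph{exactly} between $s^{-1}(e_1^k)$ and $s^{-1}(e_2^k)$, so that the Leibniz collapse leaves no spurious terms, and that the suppressed powers of $t$ are carried consistently through $\partial^+$ so that the matching is an equality of graded elements, not merely of their $t=1$ reductions. The algebraic engine is entirely $(\partial^+)^2=0$ together with the (sign-free, over $\zz_2$) Leibniz rule; all of the geometry has already been absorbed into the explicit formula for $s$ built in Step~3.
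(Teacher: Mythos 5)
Your proof is correct and is essentially the computation the paper leaves to the reader (its proof of Lemma~\ref{lem:taucompeq} is just ``The proof is a computation''): you invert $s$ using its triangular structure to get $s^{-1}(e_1^k)=a_k+P$ and $s^{-1}(e_2^k)=\partial^+a_k+Q$, and then the $\zz_2$ Leibniz rule together with $(\partial^+)^2=0$ gives $\partial^+P=Q$ and $\partial^+Q=0$, yielding $\hat{\partial}e_1^k=e_2^k=\partial' e_1^k$ and $\hat{\partial}e_2^k=0=\partial' e_2^k$. Your treatment of the suppressed powers of $t$ also matches the paper's convention that corresponding terms of $s(a_k)$ and $s(b_{\alpha-k})$ carry the same power, so the identity holds over $\zz_2[\qq]$ and not merely at $t=1$.
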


The proof is a computation.  Although the equality of Lemma~\ref{lem:taucompeq} does not hold in general, composing the two differentials with $\tau$ yields the following equality:

\begin{lem}\label{lem:taucomp}
$\tau \circ \partial'=\tau \circ \hat{\partial}.$
\end{lem}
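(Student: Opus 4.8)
The plan is to verify the identity on the generators of $\mathcal{A}'$ and then propagate it to all words. The key preliminary observation is that $\tau$, being the projection of $\mathcal{A}' \cong \mathcal{A}^- \oplus \mathcal{I}_{\mathcal{E}}$ onto the first summand along the two-sided ideal $\mathcal{I}_{\mathcal{E}}$, is an algebra homomorphism (it factors as $\mathcal{A}' \to \mathcal{A}'/\mathcal{I}_{\mathcal{E}} \xrightarrow{\sim} \mathcal{A}^-$). Since $\partial'$ and $\hat{\partial} = s \circ \partial^+ \circ s^{-1}$ are both derivations, the Leibniz expansion gives, for a word $g_1 \cdots g_m$, the formula $\tau\partial'(g_1\cdots g_m) = \sum_i \tau(g_1)\cdots \tau(\partial' g_i) \cdots \tau(g_m)$, and likewise for $\hat{\partial}$. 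Thus $\tau\partial'$ and $\tau\hat{\partial}$ are each completely determined by their values on generators, and it suffices to check equality there.

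There are two kinds of generator. On the stabilizing generators $\bigcup_i \mathcal{E}_i$, Lemma~\ref{lem:taucompeq} gives the stronger identity $\partial' = \hat{\partial}$, so $\tau\partial' = \tau\hat{\partial}$ with nothing further to prove. For a generator $g$ coming from $\Gamma^-$, we have $\partial' g = \partial^- g \in \mathcal{A}^-$, so $\tau\partial' g = \partial^- g$; and since $s$ fixes every $\Gamma^-$-generator, $s^{-1} g = g$, whence $\tau\hat{\partial} g = \tau\big(s(\partial^+ g)\big)$. The lemma therefore reduces to the single identity
\[ \tau\big(s(\partial^+ g)\big) = \partial^- g \qquad \text{for each } \Gamma^-\text{-generator } g. \]

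To establish this I would compare the two sides word by word using the disc correspondence of Step~2. Writing $\rho = \tau \circ s$ (again an algebra homomorphism, as a composite of two such maps), I split the terms of $\partial^+ g$ according to Lemma~\ref{lem:xset}. The terms whose discs meet the isotopy region trivially (covering the isotoping strand at most once, or with $|u^{-1}(e)| = 0$) are words in $\Gamma^-$-generators; these are fixed by $\rho$ and are in bijection, via the post-isotopy image of the capping surface described just before Lemma~\ref{lem:xset}, with the corresponding discs of $\partial^- g$. The terms that involve the new generators $a_k, b_{\alpha-k}$ are exactly those organized into special $x$-sets: a term $y_1 b_{\alpha-k} y_2$ of $\partial^+ g$ together with words $w_{i_j} \in \partial^+ a_{i_j}$ indexed by a partition $I \in \mathcal{I}(k)$. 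The correction terms built into the definitions of $s(a_k)$ and $s(b_{\alpha-k})$ in Step~3 are precisely the $\rho$-images of the $\partial^+ a_{i_j}$, so applying $\rho$ to $y_1 b_{\alpha-k} y_2$ and expanding reassembles each special $x$-set into the single word carried by the corresponding disc of $\partial^- g$ that covers the isotoping strand at least twice with $|u^{-1}(e)| = 1$. The bijection of Lemma~\ref{lem:xset} then guarantees that each such disc arises exactly once and that no spurious words survive; the compensating powers of $t$ inserted at the end of Step~3 make the coefficients match, and iterating the correspondence on subdiscs (as noted after Lemma~\ref{lem:xset}) disposes of the case $|u^{-1}(e)| > 1$, in parallel with the recursion defining $s$.

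The main obstacle is this last step. The reduction to generators and the treatment of stabilizing generators are formal, but verifying that the recursive correction terms in $s$ reassemble the special $x$-sets into complete $\partial^-$-words — with the correct non-commutative ordering, matching multiplicities, and matching powers of $t$ — and that the bijection of Lemma~\ref{lem:xset} leaves no unmatched contribution on either side, is where the genuine bookkeeping lies. Managing the partitions $\mathcal{I}(k)$ and the ordered insertion of the $w_{i_j}$ in place of $b_{\alpha-k}$ is the crux of the computation.
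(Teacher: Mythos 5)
Your overall architecture coincides with the paper's: reduce to generators (the paper does this implicitly; your explicit justification --- $\tau$ is an algebra homomorphism because it is projection along the two-sided ideal $\mathcal{I}_{\mathcal{E}}$, and both differentials are derivations --- is correct), dispose of the stabilizing generators via Lemma~\ref{lem:taucompeq}, and for a $\Gamma^-$-generator $x$ reduce the statement to the identity $\tau\big(s(\partial^+ x)\big) = \partial^- x$, which is then checked word-by-word using the recursive structure of $s$ and the disc correspondence of Lemma~\ref{lem:xset}.

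There is, however, a genuine gap in the word-by-word matching. You assert that the terms of $\partial^+ x$ involving new generators ``are exactly those organized into special $x$-sets.'' This is false: by definition, a special $x$-set begins with a term of the form $y_1 b_{\alpha-k} y_2$ with $y_1, y_2$ written in $\Gamma^-$-generators, so special $x$-sets only account for terms whose new generator is $b$-type. But $\partial^+ x$ may also contain words with negative corners at the new \emph{$a$-type} chords, i.e.\ words containing some $a_k$; these belong to no special $x$-set and correspond to no disc of $\partial^- x$, so your bijection cannot absorb them --- they must instead be shown to vanish under $\rho = \tau \circ s$. The paper supplies exactly this missing argument by an induction on $k$ showing $s(a_k) \in \mathcal{I}_{\mathcal{E}}$: the base case is $s(a_1) = e_1^1 \in \mathcal{I}_{\mathcal{E}}$, and in the inductive step every correction term in $s(a_k)$ is a product beginning with $s(a_{i_1})$ for some $i_1 < k$ (each part of a partition of $k$ with more than one part is smaller than $k$), hence lies in the two-sided ideal $\mathcal{I}_{\mathcal{E}}$. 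Consequently any word of $\partial^+ x$ containing an $a$-type new generator is sent by $s$ into $\mathcal{I}_{\mathcal{E}}$ and is killed by $\tau$. Without this step the claimed identification does not close up: there would be terms of $\tau\hat{\partial}(x)$ unaccounted for on the $\partial^- x$ side. The same mechanism is also what kills the summands $e_2^k$ and the new-generator-containing summands of $s(v_k)$ when you expand $\rho(y_1 b_{\alpha-k} y_2)$; in your write-up all of this is deferred to the final ``bookkeeping'' paragraph, but it is a separate idea, not merely bookkeeping of the special $x$-sets.
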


\begin{proof}
Fix $x$ to  be a generator associated to a crossing in $\Gamma^-$.  To prove the lemma, we compare the words appearing in $\tau \circ \partial'(x)$ and in $\tau \circ \hat{\partial}(x)$; it suffices to show that any word with no generators in the $\mathcal{E}_i$ appears in both $\hat{\partial}(x)$ and $\partial'(x)$.  

Terms in $\partial'(x)$ come from one of the following types of admissible maps without singular points:
\begin{enumerate}
\item Maps whose image is disjoint from the exceptional point.
\item Maps whose image covers the exceptional point, and such that $u(\partial\dd)$ covers the isotoping strand zero or one times; 
\item Maps whose image covers the exceptional point,  and such that $u(\partial\dd)$ covers the isotoping strand two or more times.
\end{enumerate} 
Clearly, $\tau\circ \partial'(x)=\partial'(x)$.

Maps of the first two types correspond to terms in $\partial^+x$ which do not involve any of the new generators.  Note that  Lemma~\ref{lem:xset} relates maps of the third type to special $x$-sets.  

We compare these terms to terms in $\hat{\partial}(x)$.  Since $s^{-1}(x)=x$, terms in $\hat{\partial}x=s\circ \partial^+ \circ s^{-1}(x)$ are the $s$-images of terms in $\partial^+$.  These separate into two types:
\begin{enumerate}
\item\label{1a} Words involving none of the generators associated to new crossings in $\Gamma^+$;
\item\label{1b} Words involving at least one of the generators associated to new crossings in $\Gamma^+$;
\end{enumerate}

In the second list,  words with some $a_{k}$ but no $b_j$ from the isotoping region vanish under $\tau$.  We show this by induction on $k$, where the base case is provided by the observation that $s(a_1)\in \mathcal{I}_{\mathcal{E}}$.  Now suppose the claim holds for all $a_k$ with $k<j$.  Applying $s$ to $a_j$ yields  $e_1^j$ and a (possibly empty) sum of products, each of which begins with an $a_k$ term for  some $k<j$.  By hypothesis, each of these will vanish under $\tau\circ s$, which proves the inductive step. 

To see that the remaining terms agree, recall the $s$-image of $b_{\alpha-k}$:
\[s(b_{\alpha-k})=e_{2}^k+s(v_k) +s\big(\sum_{I_i\in \mathcal{I}(k)} \partial^+a_{i_1}...\partial^+a_{i_{|I_i|}}\big).\]

We consider each term in turn.  First, note that since $e_2^k$ corresponds to a  generator coming from some new crossing in $\Gamma^+$, it will vanish under $\tau$.     Recall that $\partial^+a_k= b_{\alpha-k}+v_k$; if a summand of $v_k$ contains a generator corresponding to a new crossing, then its $s$ image will similarly vanish under $\tau$.  This leaves us with summands  of $\partial^+a_k$ written only in generators which come from crossings in $\Gamma^-$.  Each of these discs may be glued to a boundary disc for $x$ with a corner at $b_{\alpha-k}^-$, and after smoothing this corresponds to a disc in $\partial^-x$ whose boundary covers the isotoping curve of $\Gamma$ twice.  The remaining terms, each of the form $ \partial^+a_{i_1}...\partial^+a_{i_{|I_i|}}$, can similarly be taken together with a  word of the form $\partial^+ x=y_1 b_{\alpha-k}y_2$ to form a special $x$-set.   Lemma~\ref{lem:xset} implies that the associated discs can be glued and smoothed to form a  disc contributing to $\partial^-x$ whose boundary covers the isotoping strand more than two times.   

\end{proof}

\subsection{Step 5: Constructing $g:\mathcal{A}'\rightarrow \mathcal{A}'$}\label{sec:g}

Following Chekanov, we construct $g:\mathcal{A}'\rightarrow \mathcal{A}'$ as a composition of maps $g_j$, each of which is an elementary isomorphism affecting only the generators in $\mathcal{A}_{[j]}\backslash \mathcal{A}_{[j-1]}$.   Furthermore, each  $g_j$ inductively defines a new boundary map $\partial_{[j]}$ on $\mathcal{A}'$  by conjugation:
\[
\partial_{[j]}=g_{j}\partial_{[j-1]}g_{j}^{-1}.
\]

Set  $\partial_{[0]}=\hat{\partial}$. For the inductive step, suppose that for $1\leq k \leq j-1$, the maps $g_k$ satisfy $\partial_{[k]}|_{A_{[k]}}=\partial'|_{A_{[k]}}$.  Define $g_j: \mathcal{A}'\rightarrow\mathcal{A}'$ by
\[
g_j(x) = \begin{cases} x+F(\partial'(x)+\partial_{[j-1]}(x)) & \text{if}\ x \in \mathcal{A}_j\backslash \mathcal{A}_{[j-1]}\\ 
x & \text{otherwise}. \end{cases} 
\]

\begin{lem}
Writing $g=g_n \circ g_{n-1}\circ ...\circ g_2 \circ g_1$, 
\[
\partial'=g\circ \hat{\partial} \circ g^{-1}=g\circ s\circ \partial^+\circ s^{-1}\circ g^{-1}.
\]
This establishes a tame isomorphism between $(\mathcal{A}^+,\partial^+)$ and $(\mathcal{A}', \partial')$.

\end{lem}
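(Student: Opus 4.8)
The plan is to prove $\partial' = \partial_{[n]}$ by an induction on the length filtration $\mathcal{A}_{[0]} \subset \mathcal{A}_{[1]} \subset \cdots \subset \mathcal{A}_{[n]} = \mathcal{A}'$, following Chekanov's algorithm. The inductive hypothesis is the one already set up in Step~5: that $g_1,\dots,g_{j-1}$ have been constructed so that $\partial_{[j-1]}|_{\mathcal{A}_{[j-1]}} = \partial'|_{\mathcal{A}_{[j-1]}}$, and the goal of the inductive step is to show that the elementary isomorphism $g_j$ achieves $\partial_{[j]}|_{\mathcal{A}_{[j]}} = \partial'|_{\mathcal{A}_{[j]}}$. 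Taking $j = n$ then yields $\partial' = \partial_{[n]} = g\,\hat\partial\, g^{-1} = g\,s\,\partial^+ s^{-1} g^{-1}$; since $g$ is a composition of elementary isomorphisms and $s$ is tame by construction, this exhibits the desired tame isomorphism between $(\mathcal{A}^+,\partial^+)$ and $(\mathcal{A}',\partial')$.

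Before the inductive step I would record three preliminaries. First, every $\partial_{[k]}$ is a genuine differential, being a conjugate of $\hat\partial = s\,\partial^+ s^{-1}$; in particular $\partial_{[j-1]}^2 = 0$ and $(\partial')^2 = 0$. Second, all of the maps involved ($s$, the $g_k$, and $F$) respect the length filtration coming from Lemma~\ref{lem:stokes} and the bounds built into Definition~\ref{def:partition}, so that for the single new generator $x$ at level $j$ one has $\partial_{[j-1]}(x), \partial'(x) \in \mathcal{A}_{[j-1]}$ and $F(c) \in \mathcal{A}_{[j-1]}$, where $c := \partial'(x) + \partial_{[j-1]}(x)$. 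Third, $\tau\partial' = \tau\hat\partial$ on every generator: this is precisely Lemma~\ref{lem:taucomp} for generators coming from $\Gamma^-$ and is immediate from Lemma~\ref{lem:taucompeq} for the stabilizing generators. Since each $g_k$ alters generators only by adding terms in the image of $F$, which lands in $\mathcal{I}_{\mathcal{E}} = \ker\tau$ (so that $\tau\circ F = 0$), conjugation by the $g_k$ leaves $\tau\partial$ unchanged on generators, and hence $\tau\partial_{[j-1]}(x) = \tau\hat\partial(x) = \tau\partial'(x)$, i.e.\ $\tau(c) = 0$.

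The inductive step is then a direct computation. For the new generator $x$ we have $g_j^{-1}(x) = x + F(c)$ in characteristic $2$ (as $F(c)$ does not involve $x$), so, using that $g_j$ fixes $\mathcal{A}_{[j-1]}$ and that $\partial_{[j-1]}(x)$ and $\partial_{[j-1]}F(c)$ both lie in $\mathcal{A}_{[j-1]}$,
\[
\partial_{[j]}(x) = g_j\,\partial_{[j-1]}\bigl(x + F(c)\bigr) = \partial_{[j-1]}(x) + \partial_{[j-1]}F(c) = \partial_{[j-1]}(x) + \partial'F(c),
\]
the last equality by the inductive hypothesis applied to $F(c) \in \mathcal{A}_{[j-1]}$. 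The homotopy relation of Lemma~\ref{lem:tau}, namely $\tau + \mathrm{id} = F\partial' + \partial'F$, gives $\partial'F(c) = \tau(c) + c + F\partial'(c)$. Here $\partial'(c) = (\partial')^2(x) + \partial'\partial_{[j-1]}(x) = \partial_{[j-1]}^2(x) = 0$, using the inductive hypothesis to replace $\partial'$ by $\partial_{[j-1]}$ on $\partial_{[j-1]}(x) \in \mathcal{A}_{[j-1]}$, and $\tau(c) = 0$ from the preliminaries. Hence $\partial'F(c) = c$ and
\[
\partial_{[j]}(x) = \partial_{[j-1]}(x) + c = \partial_{[j-1]}(x) + \partial'(x) + \partial_{[j-1]}(x) = \partial'(x).
\]
For generators already in $\mathcal{A}_{[j-1]}$ the map $g_j$ and its differential act as the identity, so the inductive hypothesis gives agreement there directly, completing the step.

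I expect the main obstacle to be the bookkeeping hidden in the preliminaries rather than the final algebra: one must verify both that the length filtration is genuinely preserved by $s$, $F$, and each $g_k$ --- so that $F(c) \in \mathcal{A}_{[j-1]}$ and the inductive hypothesis may legitimately be applied to it --- and that the leading-order identity $\tau\partial' = \tau\partial_{[j-1]}$ survives all of the conjugations performed at earlier stages. The first of these is where the carefully chosen length bounds of Definition~\ref{def:partition} and Lemma~\ref{lem:stokes} do their work; the second is where Lemmas~\ref{lem:taucompeq} and~\ref{lem:taucomp}, together with the observation $\tau\circ F = 0$, are essential.
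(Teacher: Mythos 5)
Your proposal is correct and takes essentially the same route as the paper: the paper's proof of this lemma simply states that it relies on Lemmas~\ref{lem:tau}, \ref{lem:length}, and \ref{lem:taucomp} and follows verbatim from Chekanov \cite{chv}, which is exactly the argument you carry out. Your induction on the length filtration --- computing $\partial_{[j]}(x)=\partial_{[j-1]}(x)+\partial'F(c)$, then using $\tau\circ F=0$ together with $\tau\partial'=\tau\hat\partial$ (Lemmas~\ref{lem:taucompeq} and~\ref{lem:taucomp}) and the homotopy relation of Lemma~\ref{lem:tau} to conclude $\partial'F(c)=c$ --- is precisely that argument written out in detail, with the filtration bookkeeping (Lemma~\ref{lem:length}, Lemma~\ref{lem:stokes}, Definition~\ref{def:partition}) playing the role you identify.
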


The proof relies on Lemmas~\ref{lem:tau}, \ref{lem:length}, and \ref{lem:taucomp} and follows verbatim from \cite{chv}.

\bibliographystyle{amsplain} 
\bibliography{main}

\end{document}